\documentclass[a4paper,10pt,reqno]{amsart}

\usepackage{graphicx}
\usepackage{ulem}
\usepackage{here}
\usepackage{mathtools}
\usepackage[margin=25truemm]{geometry}

\makeatletter
\def\@settitle{\begin{center}
  \baselineskip14\p@\relax
  \normalfont\LARGE\bfseries

  \@title
  \ifx\@subtitle\@empty\else
     \\[1ex] 
     
     \normalsize\mdseries\@subtitle
  \fi
 \ifx\@didication\@empty\else
     \\[2ex] 
     
     \large\mdseries\it\@dedication
  \fi
  \end{center}
}
\def\subtitle#1{\gdef\@subtitle{#1}}
\def\@subtitle{}
\def\dedication#1{\gdef\@dedication{#1}}
\def\@dedication{}

\makeatother

\newcommand{\incl}[1][r]
  {\ar@<-0.2pc>@{^(-}[#1] \ar@<+0.2pc>@{-}[#1]}
\newcommand{\eq}[1][r]
   {\ar@<-3pt>@{-}[#1]
    \ar@<-1pt>@{}[#1]|<{}="gauche"
    \ar@<+0pt>@{}[#1]|-{}="milieu"
    \ar@<+1pt>@{}[#1]|>{}="droite"
    \ar@/^2pt/@{-}"gauche";"milieu"
    \ar@/_2pt/@{-}"milieu";"droite"}

\newcommand\FB{F_{\mathcal{B}}}
\newcommand\GT{\widehat{GT}}
\newcommand\B{\widehat{B}}
\newcommand\Gone{\Gamma_{g,1}}
\newcommand\GGone{\widehat{\Gamma}_{g,1}}
\newcommand\GGzero{\widehat{\Gamma}_{g,0}}
\newcommand\GGn{\widehat{\Gamma}_{g,n}}

\newtheorem{thm}{Theorem}[section]
\newtheorem{cor}[thm]{Corollary}
\newtheorem{prop}[thm]{Proposition} 
 
\newtheorem{defn}[thm]{Definition}

\newtheorem{lem}[thm]{Lemma}
\makeatletter
\renewcommand{\section}{\@startsection
{section}{1}{0mm}{5mm}{2mm}{\raggedright\bfseries}}

\@addtoreset{equation}{section}

\makeatother

\begin{document}

\title[The Grothendieck-Teichm\"uller group $\GT$ acts on 
$\widehat{\Gamma}_{g,0}$ and $\widehat{\Gamma}_{g,1}$]{The Grothendieck-Teichm\"uller group $\GT$ acts on the 
genus $g$ mapping class group with $0$ or $1$ marked point}

\markboth{P.Lochak, H.Nakamura, L.Schneps}
{}

\author{Pierre Lochak}
\address{
 Institut de Math\'ematiques de Jussieu \\
 Sorbonne Universit\'e\\
 Paris , France }
 \email{Pierre.Lochak@imj-prg.fr }

  \author{Hiroaki Nakamura}
 \address{ Department of Mathematics, 
    University of Osaka, 
    Toyonaka, Osaka 560-0043, Japan}
\email{nakamura@math.sci.osaka-u.ac.jp }
  
\author{Leila Schneps}
\address{
 Institut de Math\'ematiques de Jussieu \\
 Sorbonne Universit\'e\\
 Paris , France }
\email{Leila.Schneps@imj-prg.fr }

\keywords{Mapping Class Groups, Grothendieck-Teichm\"uller Group }
\subjclass{Primary 14E20, Secondary 20F34}

\maketitle

\begin{abstract}
The goal of this article is to prove that the
Grothendieck-Teichm\"uller group $\GT$ acts on 
$\GGzero$ and  $\GGone$,
the (full) profinite genus $g$ mapping class group with $0$ or $1$ marked point, for every $g>0$.
\end{abstract}


\setcounter{tocdepth}{1}
\tableofcontents

\section{Main Theorem}

Consider the simple loops on a topological surface $\Sigma_{g,m}^n$
of genus $g$ with $n$ marked points and $m$ boundary components shown in the following diagram:
\begin{figure}[h!]
    \centering
  \includegraphics[width=0.8\textwidth]{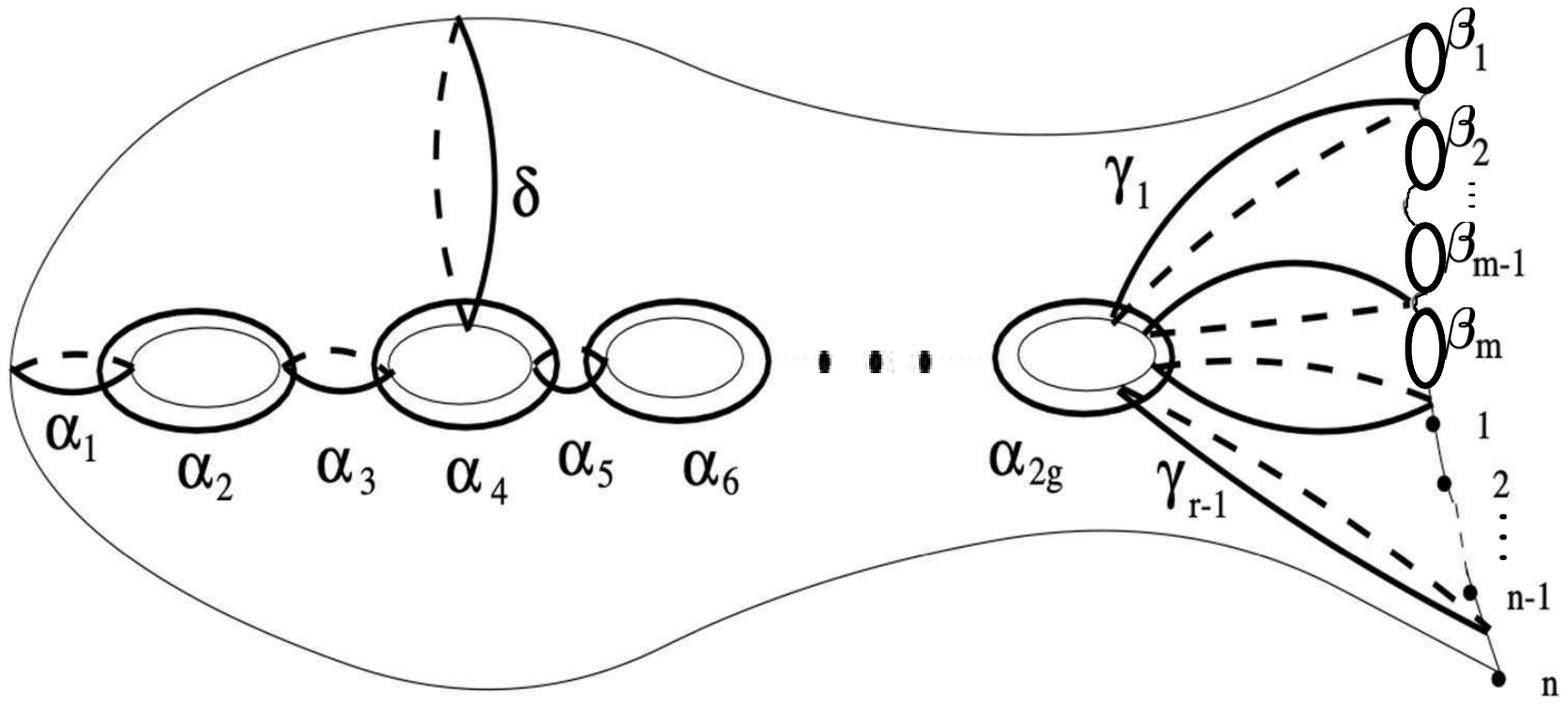}
  
    \caption{Simple loops on $\Sigma_{g,m}^n$ ($r=m+n$) whose Dehn twists generate 
the pure mapping class group}
    \label{fig:Fish}
\end{figure}

For simplicity, we call simple loops (simple closed curves) on a surface 
just {\it loops}. In principle, we will use Greek letters for loops and Roman letters for the Dehn twists
along them.
Let $m=0$. 
Wajnryb \cite{W} and Gervais \cite{G} gave a finite presentation
of the pure mapping class group $\Gamma_{g,n}$, generated by
the Dehn twists $a_i$ along $\alpha_i$ 
for $i=1,\ldots,2g$, $d$ along $\delta$, 
and $g_i$ along $\gamma_i$
for $i=1,\ldots,n-1$. In this article we mainly restrict
our attention to the case $n=1$, where the loops
$\gamma_1,\ldots,\gamma_{n-1}$ are absent; we give the complete 
finite presentation in this case in Theorem \ref{wajnryb} below.

\vspace{.3cm}
We denote the profinite completion of $\Gamma_{g,1}$ by $\GGone$; it 
is topologically presented by the same generators and relations as the
discrete version. It is well-known that the Grothendieck-Teichm\"uller
group $\GT$ acts on the genus zero pure mapping class group 
$\widehat\Gamma_{0,n}$ for $n\ge 4$, and the action obeys a ``lego'' rule
suggested by Grothendieck in \cite{Gr}. The lego rule is not special to
genus 0; it is spelled out explicitly in \cite{HLS} for surfaces of
all types $(g,n)$. However, it was shown in \cite{HLS} and \cite{NS} that
in order for elements of $\GT$ to act as automorphisms of 
$\widehat\Gamma_{g,n}$ respecting the lego rules, a new condition
(first introduced in \cite{LNS}) must be added to the
three defining conditions of $\GT$. It is unclear,
however, whether the additional condition actually defines a proper subgroup
of $\GT$, and it is also not known whether the full group $\GT$ is an 
automorphism of $\widehat\Gamma_{g,n}$ in some manner not necessarily 
subject to the lego rules. Apart from these works proving that a certain
(possibly full) subgroup of $\GT$ could be identified with the automorphism
group of $\widehat\Gamma_{g,n}$ respecting the lego, it was not known
whether and how $\GT$ could be identified with an automorphism group of
$\widehat\Gamma_{g,n}$, except in the particular cases of 
$\widehat\Gamma_{1,1}$ and $\widehat\Gamma_{1,2}$, whose close similarities
with the genus zero groups $\Gamma_{0,[4]}$ (the full genus zero mapping
class group) and $\Gamma_{0,5}$ made it possible to show that $\GT$
did act on these groups, albeit without necessarily respecting the lego (in the
absence of the additional relation) for $\widehat\Gamma_{1,2}$.

The main result of this paper states that in fact, $\GT$ gives an automorphism
group of a tower of the profinite mapping class groups $\GGzero$ and $\GGone$ for all 
$g\ge 1$.  According to the finite presentations given by
Wajnryb \cite{W}, the groups $\GGone$ and $\GGzero$ have the same generators
$d,a_1,\ldots,a_{2g}$, Dehn twists along the corresponding simple loops
$\delta,\alpha_1,\ldots,\alpha_{2g}$ shown in Figure~1,
and the presentation of $\GGzero$ is the same as that of $\GGone$ with
a single additional relation. We recall the presentation of $\GGone$ in
Theorem \ref{wajnryb}, and the additional defining relation of $\GGzero$ is
recalled in Theorem \ref{wajnryb-ii}.

\begin{thm}\label{thm11} Let $F=(\lambda,f)\in \GT$. 
The following action of $F$ given on the generators $d,a_1,\ldots,a_{2g}$
of either one of the profinite groups $\GGzero$ or $\GGone$
extends to an automorphism of that group:
\begin{equation}\label{Faction}
\begin{cases}
F(a_1)=a_1^\lambda\\
F(d)=d^\lambda\\
F(a_i)=f(a_i^2,y_i)a_i^\lambda f(y_i,a_i^2)&\hbox{for } 2\le i\le 2g,
\end{cases}
\end{equation}
where $y_i=a_{i-1}\cdots a_1^2\cdots a_{i-1}$ for $2\le i\le 2g$.
\end{thm}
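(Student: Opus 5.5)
The plan is to verify directly that the assignment \eqref{Faction} respects every defining relation of the Wajnryb presentation of $\GGone$ (Theorem \ref{wajnryb}), and then the extra relation of $\GGzero$ (Theorem \ref{wajnryb-ii}). This gives a continuous endomorphism $F$ of each group. Bijectivity then follows in the standard way. The formulas are compatible with the group law of $\GT$, so $F\mapsto F(\cdot)$ defines a homomorphism from $\GT$ into endomorphisms, and $F^{-1}$ supplies the inverse. Alternatively, $F$ induces an automorphism on the abelianization level, hence is surjective on a topologically finitely generated profinite group, hence bijective.

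The first step is structural. The chain $a_1,\dots,a_{2g}$ satisfies braid relations, since consecutive twists meet once and non-consecutive ones are disjoint. Hence it generates a quotient of the Artin braid group $B_{2g+1}$, with $a_i\mapsto\sigma_i$. The element $y_i=a_{i-1}\cdots a_1^2\cdots a_{i-1}$ corresponds to the pure braid $x_{1i}$-type element, namely the full twist of strand $i$ around strands $1,\dots,i-1$. The formula $F(a_i)=f(a_i^2,y_i)a_i^\lambda f(y_i,a_i^2)$ is then exactly the known formula for the action of $\GT$ on $\widehat B_{2g+1}$ (Drinfeld/Ihara), written in the standard generators. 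Therefore all relations among the $a_i$ alone (braid and commutation relations) are preserved automatically. I will invoke the $\GT$-action on $\widehat B_n$ via the identification $\widehat B_n\to \widehat\Gamma_{0,n+1}$ and its lift, including the fact that it fixes $\sigma_1$ up to $\sigma_1^\lambda$.

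Next come the relations involving $d$. Commutation of $d$ with $a_i$ for $i\neq 4$, and the braid relation with $a_4$, must be checked. Since $F(d)=d^\lambda$, commutation with $a_1,a_2,a_3$ is clear, because $F(a_2),F(a_3)$ are words in $a_1,a_2,a_3$. For $i\ge5$, $F(a_i)$ involves $y_i$, which contains $a_4$, so commutation is not immediate. Here I plan to rewrite using the symmetry of the chain. There is an element (a product of braid generators, e.g. a Garside-type half twist in the subchain) conjugating $d$ and $a_1,\dots$ into a configuration where the $\GT$ formula uses only twists disjoint from $\delta$. Concretely, I will use the alternative expression of the $\GT$ action in which $f$ is evaluated on $(a_i^2,y_i)$ with $y_i$ replaced by a product supported on a subsurface. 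Compatibility follows from the cocycle/pentagon relations of $f$. The braid relation $da_4d=a_4da_4$ is handled by reducing to a copy of $B_3$ or of $\widehat\Gamma_{1,2}$-type subsurface, where the action is known.

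The main obstacle is the remaining Wajnryb relations: the lantern-type relation and the chain/"star" relation linking $d$ with $(a_1\cdots a_4)$-products (e.g. $(a_4a_3a_2a_1^2a_2a_3a_4\, d)$ commuting with a specific twist, and the relation expressing a lantern among $a_1,a_3,a_5,d$ and conjugates). For these I would use that products like $(a_1a_2a_3a_4)^{10}$ or $(a_1\cdots a_4)^{5}$ are central in the braid image, so $F$ sends them to their $\lambda$-powers (the $\GT$ action on $\widehat B_n$ raises the center generator to the $\lambda$ power). Then the chain relation identifying $(a_1a_2a_3a_4)^{10}$, or a similar element, with boundary twists lets me determine $F$ of the relevant boundary twists. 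The lantern relation then reduces to a genus-zero relation in $\widehat\Gamma_{0,4}$/$\widehat\Gamma_{0,5}$, where the $\GT$ action is known to be well defined. The hardest single step will be this lantern/$\GGzero$ hyperelliptic relation, where one needs that the image of the relevant boundary curve twist is a $\lambda$-power up to conjugation. I will try first to express everything inside a subsurface of genus $\le 2$ covered by a sphere via the hyperelliptic involution (Birman–Hilden), transporting the genus-zero $\GT$ action.
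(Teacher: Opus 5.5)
Your overall framework is right: check Wajnryb's relations, get the relations among the $a_i$ from $\iota:\widehat B_{2g+1}\to\GGone$ and Drinfeld's action, and invert using the group law of $\GT$. Drop the abelianization alternative, since $\Gamma_{g,1}$ is perfect for $g\ge3$. The problem is that every step involving $d$ is left as a hope, and the reductions you suggest do not work.

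\textbf{Relations in (A) and (B) involving $d$.} For $da_4d=a_4da_4$, a copy of $B_3$ is useless because $F(a_4)$ involves $y_4$. Moreover, $\alpha_1,\dots,\alpha_4,\delta$ is a $5$-chain filling a genus-$2$ subsurface, so there is no reduction to a $(1,2)$-type subsurface. The missing idea is the homomorphism $\iota_3:\widehat B_6\to\GGone$ with $\sigma_i\mapsto a_i$ for $i\le 4$ and $\sigma_5\mapsto d$. Relation (B) says exactly that $\iota_3$ kills $\omega_5^{-1}\eta_6$. In the quotient $\widehat B_6/\langle\omega_5^{-1}\eta_6\rangle$, Drinfeld's formula already gives $F(\sigma_5)=\sigma_5^\lambda$, because $f(\eta_5,\sigma_5^2)$ commutes with $\sigma_5$ there. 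Hence $F\circ\iota_3=\iota_3\circ F$, which yields the braid relation with $a_4$ and (B) at once. For the commutation of $d$ with $a_i$, $i\ge5$, you need $f(a_i^2,y_i)$ to commute with $d$. This follows from $y_idy_i^{-1}=y_i^{-1}dy_i=d'$, so that conjugation by $y_i$ inverts $d^{-1}d'$, combined with $f$ being a commutator (Haiku Lemma (iii)). It does not come from any cocycle rewriting.

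\textbf{Relations (C), (C$'$) and (D).} This is the real gap. The twist $d_3$ enters only through the lantern $g_2g_1d=a_1a_3a_5d_3$, with $g_1=t_2^{-1}dt_2$ and $g_2=t_1^{-1}t_2^{-1}dt_2t_1$. So you must know how $F$ acts on the doubled braids $t_i=a_{2i}a_{2i-1}a_{2i+1}a_{2i}$.

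The paper replaces $F$ by $F_{\mathcal B}=\mathrm{inn}\bigl(f(y_3,a_3^2)f(y_5,a_5^2)f(y_7,a_7^2)\bigr)\circ F$. This is the automorphism attached by A-moves to the bracketing $(((x_1,x_2),(x_3,x_4)),(x_5,x_6)),(x_7,x_8)$. Under it, $t_1,t_2,t_3$ obey Drinfeld's formulas for $\widehat B_4$, while still $F_{\mathcal B}(d)=d^\lambda$ and $F_{\mathcal B}(d')={d'}^\lambda$. Then $dd'=w_4$ and the Haiku Lemma turn $F_{\mathcal B}(g_1)$ and $F_{\mathcal B}(g_2)$ into conjugates of $g_1^\lambda$ and $g_2^\lambda$ by $f$'s in the lantern twists. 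Two forms of relation (II) for the lantern then give $F(d_3)=d_3^\lambda$. Relation (C) follows via $\iota_4:\widehat B_8\to\GGone$, $\sigma_7\mapsto d_3$.

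Your substitutes do not supply this computation:
\begin{itemize}
\item ``Reduce to genus zero where the action is known'' presupposes that the action defined by these formulas restricts, on the lantern subsurface, to the genus-zero $\GT$-action. That is precisely the Lego compatibility, which is not known for all of $\GT$.
\item Knowing $F$ on central elements such as $w_5^2=(a_1\cdots a_4)^{10}$ (the twist along $\omega_3$) says nothing about $F(d_3)$.
\item Birman--Hilden cannot help: the hyperelliptic involution swaps $\delta_k$ and $\delta'_k$, so $d,d_3,d_g$ are not symmetric, and (C), (C$'$), (D) live in genus $\ge3$.
\end{itemize}

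Similarly, (C$'$) needs $F_{\mathcal B}(t_2t_1t_3t_2)$. The A-move to $((x_1,x_2),(x_3,x_4)),((x_5,x_6),(x_7,x_8))$ shows it is $(t_2t_1t_3t_2)^\lambda$ conjugated by $f(w_6,t_3^2)$, which commutes with $d$ and $d'$. Relation (D) needs $F(d_i)=d_i^\lambda$ for all $i\le g$. This is proved by induction along the lanterns $g_{2,i}g_{1,i}d_i=d_{i-1}a_{2i-1}a_{2i+1}d_{i+1}$, with relation (III) controlling $F_{\mathcal B}(g_{2,i})$. None of these mechanisms appears in your plan.
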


\vspace{.3cm}
In the next section we recall some background material, essentially 
the presentation of the mapping class group
$\Gamma_{g,1}$ by generators $d,a_1,\ldots,a_{2g}$ with Wajnryb's
relations labeled (A), (B), (C) and (C$'$) in Theorem \ref{wajnryb}, together
with the $\GT$ automorphism action on these groups given by Drinfeld
(\cite{D}). In Sections 3 and 4 we prove the main theorem in the
case of $\widehat\Gamma_{2,1}$, which only requires the action of $F\in \GT$ 
proposed in Theorem \ref{thm11} to respect the relations 
(A) and (B). In Section 5 we settle the case of $\widehat\Gamma_{3,1}$ 
by showing that the purported $\GT$-action respects relation (C), and
prepare the ground for the general case $g\ge 4$, which is finally dealt 
with in Section 6 by verifying that relation (C$'$) is also respected. 
Finally, Section 7 contains the proof 
of the main theorem for the group $\GGzero$, by showing that the $\GT$-action 
respects the one additional relation (D) needed to pass from 
$1$ to $0$ marked points. Section 8 contains some perspectives for future
work.  The authors note that a large part of the calculations and proofs below 
stem from our unpublished discussions which led to the short paper \cite{LNS} 
in 1997.

\section{Some background material in genus zero}

We recall here some known facts about the $\widehat{GT}$-action on the full profinite
Artin braid group $\B_n$ on $n$ strands.  We first have the original result due
to Drinfeld (in [D]).

\begin{thm}\label{drinfeldaction}
Let $F=(\lambda,f)\in \widehat{GT}$. Then $F$ induces an automorphism of the
profinite Artin braid group $\B_n$ for all $n\ge 3$ via the formulas
\begin{equation}\label{drinfeld}
F(\sigma_1)=\sigma_1^\lambda,\ \ \ \ 
F(\sigma_i)=f(\sigma_i^2,\eta_i)\sigma_i^\lambda f(\eta_i,\sigma_i^2)\ \ \hbox{for}\ \ 2\le i\le n-1.
\end{equation}
This action also satisfies
$$F(\eta_i)=\eta_i^\lambda,\ \ \ F(\omega_i)=\omega_i^\lambda\ \ \hbox{for}\ i=2,\ldots,n$$
where $\eta_i=\sigma_{i-1}\cdots \sigma_1\cdot \sigma_1\cdots \sigma_{i-1}$
and $\omega_i=(\sigma_1\cdots \sigma_{i-1})^i$.
\end{thm}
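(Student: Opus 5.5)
We use the defining relations of $\GT$ for $F=(\lambda,f)$, with $\lambda\in\widehat{\mathbb Z}^\times$ and $f\in\widehat F_2'$:
\begin{equation*}
\hbox{(I) } f(x,y)f(y,x)=1,\qquad \hbox{(II) } f(z,x)z^mf(y,z)y^mf(x,y)x^m=1 \ \hbox{ with } xyz=1,\ m=\tfrac{\lambda-1}{2},
\end{equation*}
together with the pentagon relation (III) in $\widehat K_4$, the pure sphere braid group on five strands. We first verify that formulas \eqref{drinfeld} respect the Artin relations of $\B_n$. The braid relations $\sigma_i\sigma_j=\sigma_j\sigma_i$ for $|i-j|\ge2$ and $\sigma_i\sigma_{i+1}\sigma_i=\sigma_{i+1}\sigma_i\sigma_{i+1}$ must hold for the images. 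We then check bijectivity, and finally prove the formulas for $F(\eta_i)$ and $F(\omega_i)$.

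The key device is to prove, by induction on $i$, the auxiliary identity $F(\eta_i)=\eta_i^\lambda$ at the same time as the relations, using $\eta_{i+1}=\sigma_i\eta_i\sigma_i$. The base case is $\eta_2=\sigma_1^2$, so $F(\eta_2)=\sigma_1^{2\lambda}$. For the inductive step, write $F(\sigma_i)=f(\sigma_i^2,\eta_i)\sigma_i^\lambda f(\eta_i,\sigma_i^2)$ and use (I) to cancel the inner $f$'s. This gives
$$F(\eta_{i+1})=f(\sigma_i^2,\eta_i)\,\sigma_i^\lambda\eta_i^\lambda\sigma_i^\lambda\, f(\eta_i,\sigma_i^2).$$
Now $\sigma_i^2$, $\eta_i$ and $\eta_{i+1}^{-1}$ are the three standard generators of the pure braid group on the three "strands" formed by the block $\{1,\dots,i\}$ and the strands $i,i+1$, with $\eta_{i+1}$ central in $\langle\sigma_i,\eta_i\rangle$. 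Relation (II), in its braid form $\sigma^\lambda\eta^\lambda\sigma^\lambda=f(\eta,\sigma^2)\,(\sigma\eta\sigma)^\lambda f(\sigma^2,\eta)$ (the standard $B_3$ reformulation, using that $(\sigma\eta\sigma)$ commutes with $\sigma$ and $\eta$), then yields $F(\eta_{i+1})=\eta_{i+1}^\lambda$.

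Next come the relations. The commutation $\sigma_i\sigma_j=\sigma_j\sigma_i$ for $|i-j|\ge2$, $j<i$, holds because $\sigma_j$ commutes with $\sigma_i$ and with $\eta_i$. Indeed, $\sigma_j$ commutes with $\eta_i$, since $\eta_i$ is central in $B_i$ up to the pure part: $\eta_i=\omega_i\omega_{i-1}^{-1}$ and $\sigma_j\in B_{i-1}$. Then $F(\sigma_j)$, which is a word in $\sigma_1,\dots,\sigma_{i-2}$, commutes with $F(\sigma_i)$. The braid relation for $i=1$ reduces to relation (II) in $B_3$, which is Drinfeld's basic computation. The braid relation $\sigma_i\sigma_{i+1}\sigma_i=\sigma_{i+1}\sigma_i\sigma_{i+1}$ for $i\ge2$ is the main obstacle. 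It involves $f(\sigma_i^2,\eta_i)$ and $f(\sigma_{i+1}^2,\eta_{i+1})$ simultaneously, and this is exactly where the pentagon (III) enters. I would rewrite both sides so that they differ by conjugation. Conjugating by $f(\sigma_i^2,\eta_i)$ turns $\sigma_{i+1}$'s coefficient into one computed in the subgroup generated by $\eta_i$, $\sigma_i^2$ and $\sigma_{i+1}^2$. This subgroup is a quotient of $K_5$ with the block $\{1..i-1\}$ collapsed to a single strand. The required identity then becomes the image of (III) under the homomorphism $K_5\to \B_n$ sending the standard generators $x_{12}\mapsto\eta_{i}$-type element (the block with strand $i$), $x_{23}\mapsto\sigma_i^2$, $x_{34}\mapsto\sigma_{i+1}^2$, and so on, together with (I) and the already proved $F(\eta_i)=\eta_i^\lambda$.

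Finally, surjectivity. Since $\GT$ is a group and $F\mapsto$ (endomorphism) is compatible with composition (the standard multiplication law $f_1*f_2$ is designed for this), $F^{-1}$ gives an inverse endomorphism. Alternatively, the images generate modulo the Frattini subgroup, so $F$ is surjective, hence an automorphism of the finitely generated profinite group. For $\omega_i$, note that $\omega_i=\eta_2\eta_3\cdots\eta_i$, a product of commuting elements. Hence $F(\omega_i)=\prod\eta_j^\lambda=\omega_i^\lambda$.
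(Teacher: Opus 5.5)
The paper gives no proof of this statement; it is quoted from Drinfeld. Judged on its own, your proposal has a genuine gap, and the one step you carry out explicitly is wrong.

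\textbf{The induction for $F(\eta_{i+1})$.} You need
\[
F(\sigma_i)\,\eta_i^\lambda\,F(\sigma_i)=f(\sigma_i^2,\eta_i)\sigma_i^\lambda f(\eta_i,\sigma_i^2)\,\eta_i^\lambda\,f(\sigma_i^2,\eta_i)\sigma_i^\lambda f(\eta_i,\sigma_i^2).
\]
The two inner $f$'s are separated by $\eta_i^\lambda$, so (I) cannot cancel them. They would cancel only if $f(\eta_i,\sigma_i^2)$ commuted with $\eta_i^\lambda$, and this fails in general: for $i=2$, the images of $\sigma_1^2$ and $\sigma_2^2$ freely generate $\widehat P_3$ modulo its center.

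Your ``braid form of (II)'' is false for the same reason. It differs from the true identity exactly by these inner factors, so your two errors cancel and produce the right answer.

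The supporting claims are also wrong. $\eta_{i+1}=\sigma_i\eta_i\sigma_i$ does not commute with $\sigma_i$; for $i=2$ that would say $\sigma_1^2\sigma_2=\sigma_2\sigma_1^2$. The central element of $\langle\sigma_i,\eta_i\rangle$ is $\eta_i\eta_{i+1}=\omega_{i+1}\omega_{i-1}^{-1}$. The triple with central product is $\eta_i$, $\sigma_i\eta_i\sigma_i^{-1}$, $\sigma_i^2$, not $\sigma_i^2,\eta_i,\eta_{i+1}^{-1}$.

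A correct argument goes through cabling:
\begin{itemize}
\item Take the finite-index subgroup $\langle\sigma_1^2,\sigma_2\rangle\subset B_3$ of braids fixing strand $1$, and replace the first strand by $i-1$ parallel strands. This gives a map to $B_{i+1}$ with $\sigma_1^2\mapsto\eta_i$ and $\sigma_2\mapsto\sigma_i$; it extends to closures because the index is finite.
\item Once $F(\eta_i)=\eta_i^\lambda$ is known, this map sends the $\B_3$ formulas \eqref{drinfeld} to the $\B_{i+1}$ ones.
\item It therefore transports the $\B_3$ identity $F(\eta_3)=\eta_3^\lambda$ to $F(\eta_{i+1})=\eta_{i+1}^\lambda$. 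The $\B_3$ identity itself comes from $F(\omega_3)=\omega_3^\lambda$ and $\omega_3=\eta_2\eta_3$.
\end{itemize}

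\textbf{The braid relation for $i\ge 2$.} This is the real content of the theorem, and you give only a plan whose specifics do not hold up.

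There is no homomorphism $K_5\to\B_n$ of the kind you describe. The collapsing map lives on four strands (the block $\{1,\dots,i-1\}$, then $i,i+1,i+2$). It sends the center of $P_4$ to $\omega_{i+2}\omega_{i-1}^{-1}\neq 1$, so it does not factor through $K_5$.

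More importantly, you never show how (III), combined with (I) and (II), yields $F(\sigma_i)F(\sigma_{i+1})F(\sigma_i)=F(\sigma_{i+1})F(\sigma_i)F(\sigma_{i+1})$.

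The same cabling trick organizes this cleanly. The subgroup $\langle\sigma_1^2,\sigma_2,\sigma_3\rangle\subset B_4$ maps to $B_{i+2}$ by $\sigma_1^2\mapsto\eta_i$, $\sigma_2\mapsto\sigma_i$, $\sigma_3\mapsto\sigma_{i+1}$, so that $\eta_3\mapsto\eta_{i+1}$. It carries the $\B_4$ formulas to the $\B_{i+2}$ ones, which reduces everything to the relation between $\sigma_2$ and $\sigma_3$ in $\B_4$. That case is exactly Drinfeld's pentagon computation, using (III) in its $\widehat P_4$ form, and it is missing from your proposal.

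\textbf{Surjectivity.} The Frattini route is unjustified. Each $F(\sigma_i)$ is only a conjugate of $\sigma_i^\lambda$, and conjugates of generators need not generate modulo $\Phi$ in a non-pronilpotent group. The route via $F^{-1}\in\GT$ does work, once you check that composition of the induced endomorphisms follows the product law of $\GT$. That check can again be done on the pairs $(\eta_i,\sigma_i^2)$.

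Your far-commutation argument and your deduction of $F(\omega_i)$ from the $F(\eta_j)$ are fine.
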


\vskip .2cm
\begin{defn} {\rm
A {\it bracketing} on $n$ points consists of $k$ pairs of brackets with
$1\le k\le n-2$ in which no pair of brackets contains all the points, 
and if a pair of brackets contains one bracket from another pair, then it
contains both brackets from that pair (see examples below).
A {\it maximal bracketing} consists of $n-2$ pairs of brackets. An 
{\it A-move} on a bracketing consists in replacing a configuration of 
brackets $(X_1,X_2),X_3$ by the configuration 
$X_1,(X_2,X_3)$
or vice versa, where $X_i$ denotes either a single point or a
set of consecutive points included in a bracket (which itself may contain
other, smaller brackets). }
\end{defn}

\noindent {\bf Examples 2.2.1.}  Here are two examples of bracketings of $x_1,\ldots,x_6$:
$((x_1,x_2),(x_3,x_4)),(x_5,x_6)$ and $(x_1,x_2),(x_3,x_4),(x_5,x_6)$. 
A {\it maximal} bracketing of 6 points contains $n-2=4$ pairs of brackets, 
so we could take for example 
$$(((x_1,x_2),(x_3,x_4)),x_5),x_6 \quad{\rm or}\quad ((x_1,x_2),(x_3,x_4)),(x_5,x_6).$$
These two maximal bracketings differ by one A-move. The maximal bracketings:
$$(((x_1,x_2),(x_3,x_4)),x_5),x_6 \quad{\rm and}\quad  (((x_1,x_2),x_3),x_4),(x_5,x_6)$$
differ by two commuting A-moves. 

\begin{thm}\label{Amovetheorem} 
Let $n\ge 3$. Let $F\in \widehat{GT}$ and let 
$\overline{F}$ denote the outer automorphism of $\B_n$ induced by the 
automorphism $F$ acting as in \eqref{drinfeld}.  Then 

\begin{enumerate}
\item[(i)]

for each maximal 
bracketing ${\mathcal{B}}$ of $n$ points, there is a lift of
$\overline{F}$ to an automorphism $F_{\mathcal{B}}$ of 
$\B_n$ having the property that 
if $i,\cdots,j$ denotes a consecutive set of points enclosed within
a pair of brackets (regardless of smaller sets enclosed by brackets with
the pair), then denoting by $T_{i,\cdots,j}$ the full twist of the braid 
packet $i,\ldots,j$, $F_{\mathcal{B}}$ acts on
$T_{i,\ldots,j}$ by $F(T_{i,\ldots,j})=T_{i,\ldots,j}^\lambda$.

\item[(ii)]

Let $X_1=(x_i,\ldots,x_j)$ and $X_2=(x_{j+1},\ldots,x_k)$
be two consecutive packets of consecutive strands in $\B_n$. Let
$\sigma_{X_1,X_2}$ denote the flat crossing of the packet $X_1$ to the
right over the packet $X_2$ (so that the full twist $T_{i,\ldots,k}$ is
given by $T_{i,\ldots,j}=\sigma_{X_1,X_2}^2T_{i,\ldots,j}T_{{j+1},\ldots,k}$).
If ${\mathcal{B}}$ is a bracketing containing the configuration
$(X_1,X_2)$, then the associated automorphism of $\B_n$ acts by
\begin{equation}\label{flatcrossing}
F_{\mathcal{B}}(\sigma_{X_1,X_2})=\sigma_{X_1,X_2}^\lambda.
\end{equation}

\item[(iii)]

If two bracketings ${\mathcal{B}}$ and ${\mathcal{B}'}$ differ
by a single A-move which takes a pair of brackets containing
the consecutive set of points $i,\ldots,j$ to a pair containing
the consecutive set of points $i',\ldots,j'$, then the corresponding 
automorphisms $F_{\mathcal{B}}$ and $F_{\mathcal{B}'}$ are related by
\begin{equation}\label{Amove}
F_{\mathcal{B}'}={\rm inn}\,f(T_{i,\ldots,j}, T_{i',\ldots,j'})
\circ F_{\mathcal{B}}.
\end{equation}
\end{enumerate}
\end{thm}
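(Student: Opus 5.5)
The strategy is to use the operadic (parenthesized braid) description of $\GT$ and to build each $F_{\mathcal{B}}$ from Drinfeld's automorphism $F$ of Theorem \ref{drinfeldaction} by conjugation. We take as reference the left-comb bracketing ${\mathcal{B}}_0=(\cdots((x_1,x_2),x_3),\cdots),x_n$ and set $F_{{\mathcal{B}}_0}=F$. For this bracketing, Theorem \ref{drinfeldaction} already gives (i): the bracketed packets are $1,\ldots,i$, and $T_{1,\ldots,i}=\omega_i$ satisfies $F(\omega_i)=\omega_i^\lambda$. Property (ii) also holds for ${\mathcal{B}}_0$. The flat crossing of $(x_1,\ldots,x_{i-1})$ over $x_i$ is $\sigma_{i-1}\cdots\sigma_1$, and it equals $\omega_i\omega_{i-1}^{-1}$ up to the relation $\sigma_{X_1,X_2}^2T_{X_1}T_{X_2}=T_{X_1X_2}$ (with $X_2$ a single strand, so $T_{X_2}=1$). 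Using commutation of $\omega_{i-1}$ with the crossing, this reduces to $F(\eta_i)=\eta_i^\lambda$ and $F(\omega_i)=\omega_i^\lambda$. For a general maximal bracketing ${\mathcal{B}}$ we then pick a chain of A-moves ${\mathcal{B}}_0\to{\mathcal{B}}_1\to\cdots\to{\mathcal{B}}$ and define $F_{\mathcal{B}}$ by composing inner automorphisms as in \eqref{Amove}.

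Step one is the local computation behind (iii). Suppose $F_{\mathcal{B}}$ satisfies (i) and (ii), and ${\mathcal{B}}'$ differs from ${\mathcal{B}}$ by one A-move $(X_1,X_2),X_3\mapsto X_1,(X_2,X_3)$. We must show that ${\rm inn}\,f(T_{X_1X_2},T_{X_2X_3})\circ F_{\mathcal{B}}$ satisfies (i) and (ii) for ${\mathcal{B}}'$.
\begin{itemize}
\item Brackets not touched by the move are either disjoint from $X_1X_2X_3$ or contain it entirely. In both cases their twists commute with $T_{X_1X_2}$ and $T_{X_2X_3}$, so conjugating by $f$ does not affect them.
\item For the new bracket $X_2X_3$, we cable: contracting each packet $X_j$ to a single strand gives a homomorphism from the subgroup generated by $\B_3$ on the packets together with the internal braids. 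Under this cabling $T_{X_1X_2}\mapsto\sigma_1^2 T_{X_1}T_{X_2}$, and the internal twists are central in the relevant subgroup and are themselves sent to their $\lambda$-powers.
\item The claim therefore reduces to the three-strand identity $f(\sigma_1^2,\sigma_2^2)^{-1}\sigma_2^\lambda f(\sigma_1^2,\sigma_2^2)=F(\sigma_2)$ in $\B_3$, up to the central correction. This identity is Drinfeld's formula for $i=2$, where $\eta_2=\sigma_1^2$, read using $f(x,y)^{-1}=f(y,x)$.
\end{itemize}
The flat crossings $\sigma_{X_2,X_3}$ and the twist $T_{X_2X_3}$ are handled by the same cabling argument, and together these give (i) and (ii) for ${\mathcal{B}}'$.

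Step two is well-definedness up to the inner automorphism $\overline{F}$. Every $F_{\mathcal{B}}$ obtained this way differs from $F$ by an inner automorphism, so it lifts $\overline{F}$ regardless of the chain chosen. Because the properties we need are checked directly at each stage, we only need existence along some chain, and (iii) then holds for any two adjacent bracketings. This last point deserves care. If we have defined $F_{\mathcal{B}}$ and $F_{{\mathcal{B}}'}$ by different chains and they are adjacent, we must confirm that they satisfy \eqref{Amove} exactly rather than up to the centralizer of the image. The cleanest route is to appeal to the coherence of the associahedron: its 2-cells are pentagons and commuting squares.
\begin{itemize}
\item For commuting squares, the two $f$-factors involve twists of disjoint or nested packets and commute.
\item For pentagons, the product of the five $f$-factors around the cycle is trivial. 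This is exactly the pentagon relation of $\GT$ after cabling into $\B_4$ via $T$'s of packets.
\end{itemize}
Since the associahedron is simply connected, the chain-defined $F_{\mathcal{B}}$ is independent of the chain.

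The main obstacle is the cabling step: making rigorous that $f(T_{X_1X_2},T_{X_2X_3})$ behaves, with respect to the packet braids, like $f(\sigma_1^2,\sigma_2^2)$ in $\B_3$. This requires the relation $F(T_{X})=T_X^\lambda$ for internal twists, which gives compatibility of $F$ with strand doubling. I would first try to prove this using the pentagon and hexagon relations to show that $F$ commutes with the cabling maps $\B_3\to\B_n$. Failing that, I would use the identification of $\GT$ with automorphisms of the profinite parenthesized braid operad, where cabling compatibility is built in.
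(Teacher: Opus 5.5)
The paper gives no proof of this statement. It is quoted as background from the parenthesized-braid (Lego) description of $\GT$. The only consequences used later, for packets of two strands (Theorem \ref{doublestrandsubgroup} and \eqref{t1t2t3}), are re-derived by hand in the Appendix from Drinfeld's formulas, using relations (II) and (III) repeatedly.

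\textbf{Step one has a genuine gap.} It sits exactly at what you call the main obstacle, and it is not a technicality. Your induction hypothesis is only that $F_{\mathcal{B}}$ satisfies (i) and (ii). These properties do not determine $F_{\mathcal{B}}$ on the packet braids, so they cannot yield (i) and (ii) at $\mathcal{B}'$. Take $F=1$, $n=3$, $\mathcal{B}=(x_1,x_2),x_3$. The automorphism ${\rm inn}(\sigma_1)$ satisfies everything (i) and (ii) assert at $\mathcal{B}$, since it fixes $\sigma_1$ and $\omega_3$. Yet the automorphism it produces at $\mathcal{B}'=x_1,(x_2,x_3)$ does not fix $\sigma_2$.

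The three-strand identity you reduce to is just the definition of $F(\sigma_2)$. All the content lies in a stronger statement that must be carried through the induction for every configuration $((X_1,X_2),X_3)$ of every bracketing reached, simultaneously: $F_{\mathcal{B}}$ acts by Drinfeld's formulas, in the packet braids, on the cabled copy of $\widehat{P}_3$. (Cabling $\B_3\to\B_n$ is a homomorphism only when the three packets have equal size.)

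Your two suggested routes do not close this. Proving the stronger statement from (II) and (III) is a real computation; in the Appendix, even doubled strands need several hexagon and pentagon manipulations along explicit A-move diagrams. Invoking $\GT\cong{\rm Aut}$ of the profinite parenthesized braid operad essentially assumes the theorem, since there $F_{\mathcal{B}}$ is just the induced automorphism of ${\rm Aut}(\mathcal{B})$, and (i)--(iii) are read off from the images of the braiding and the associator.

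\textbf{Property (ii) cannot be proved in the generality you attempt, and your base case is wrong.} The flat crossing $\sigma_{i-1}\cdots\sigma_1$ is not $\omega_i\omega_{i-1}^{-1}$; that is the pure braid $\eta_i$. Knowing $F(\eta_i)=\eta_i^\lambda$ does not determine $F(\sigma_{i-1}\cdots\sigma_1)$, and the claim is in fact false. Every $F_{\mathcal{B}}$ you construct induces the identity on $S_n$, because the conjugating elements are pure. So $F_{\mathcal{B}}(\sigma_{X_1,X_2})=\sigma_{X_1,X_2}^\lambda$ would force $\pi^\lambda=\pi$ for the permutation $\pi$ of the crossing. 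For $\lambda=-1$ this fails as soon as $|X_1|\neq|X_2|$. Concretely, $(-1,1)\in\GT$ sends $\sigma_2\sigma_1$ to $\sigma_2^{-1}\sigma_1^{-1}\neq(\sigma_2\sigma_1)^{-1}$.

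Step one also checks only the untouched brackets, not the untouched flat crossings. Suppose $(Y_1,Y_2)$ is a configuration of both $\mathcal{B}$ and $\mathcal{B}'$, with $|Y_1|=|Y_2|$ and the A-move inside $Y_1$. Then $\sigma_{Y_1,Y_2}$ does not commute with $\phi=f(T_{X_1X_2},T_{X_2X_3})$; it translates $\phi$ to the strands of $Y_2$. So $F_{\mathcal{B}}(\sigma_{Y_1,Y_2})=\sigma_{Y_1,Y_2}^\lambda$ gives $F_{\mathcal{B}'}(\sigma_{Y_1,Y_2})=\phi\,\phi'^{-1}\sigma_{Y_1,Y_2}^\lambda$, where $\phi'$ is the translate of $\phi$. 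This differs from $\sigma_{Y_1,Y_2}^\lambda$ whenever $f\neq 1$.

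Hence (ii) is valid only when $X_1$ and $X_2$ have the same size and the same internal bracketing, or when it is formulated in the parenthesized braid groupoid. The equal-shape case is the only one the paper uses (pairs of strands in Theorem \ref{doublestrandsubgroup}). Your induction has to be restricted to, and carry, that form.

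Your Step two, chain independence via pentagons and commuting squares transported by the pure cabling $\widehat{P}_4\to\widehat{P}_n$, is fine.
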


\vspace{.3cm}
\noindent {\bf Example 2.3.1.} The {\it standard bracketing} ${\mathcal{B}_0}$
consists of the pairs of brackets containing 
the points $1,\ldots,i$ for $2\le i\le n-1$.

\begin{lem} The Drinfeld action of $F\in \GT$ on the profinite Artin braid group $\B_n$ given in 
\eqref{drinfeld} is the automorphism $F_{\mathcal{B}_0}$ associated to the standard
bracketing ${\mathcal{B}_0}$. 
\end{lem}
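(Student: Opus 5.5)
The plan is to compare the Drinfeld formulas \eqref{drinfeld} with the defining property of $F_{\mathcal{B}_0}$ in Theorem \ref{Amovetheorem}(i). Since an automorphism of $\B_n$ is determined by its values on generators, it is enough to show that the lift $F_{\mathcal{B}_0}$ sends $\sigma_1$ to $\sigma_1^\lambda$ and each $\sigma_i$, $2\le i\le n-1$, to $f(\sigma_i^2,\eta_i)\sigma_i^\lambda f(\eta_i,\sigma_i^2)$. A cleaner variant is to show that the Drinfeld automorphism $F$ itself has the characterizing property of (i) for $\mathcal{B}_0$. This amounts to checking that $F(T_{1,\ldots,i})=T_{1,\ldots,i}^\lambda$ for every $2\le i\le n-1$, and that the inner ambiguity in the lift is then fixed.

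First I identify the full twists belonging to the standard bracketing. The packet $1,\ldots,i$ has full twist $T_{1,\ldots,i}=(\sigma_1\cdots\sigma_{i-1})^i=\omega_i$. Theorem \ref{drinfeldaction} gives $F(\omega_i)=\omega_i^\lambda$ for all $i$, which is exactly the condition in (i) for every bracket of $\mathcal{B}_0$. The inner bracket $(x_1,x_2)$ gives the flat crossing $\sigma_{x_1,x_2}=\sigma_1$, and $F(\sigma_1)=\sigma_1^\lambda$ matches \eqref{flatcrossing}. For the bracket $(X_1,x_{i+1})$ with $X_1=(x_1,\ldots,x_i)$, the flat crossing squares to $\sigma_{X_1,x_{i+1}}^2=\eta_{i+1}$. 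The relation $F(\eta_{i+1})=\eta_{i+1}^\lambda$ from Theorem \ref{drinfeldaction} is consistent with this, since $\sigma_{X_1,x_{i+1}}=\sigma_i\cdots\sigma_1$ is a root of $\eta_{i+1}$ of the right form.

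Next I pin down the lift. Any two lifts of $\overline{F}$ differ by an inner automorphism ${\rm inn}(h)$. If both satisfy (i) for $\mathcal{B}_0$, then $h$ centralizes every $T_{1,\ldots,i}^\lambda$, hence every $\omega_i$ for $2\le i\le n$. I argue that, together with $\sigma_1$ (from $\omega_2=\sigma_1^2$), these elements generate a subgroup whose centralizer in $\B_n$ is just the center $\langle\omega_n\rangle$. The justification is that the closures of the groups $\langle\sigma_1,\ldots,\sigma_{i-1}\rangle$ are nested, and each $\omega_i$ generates the center of its own stage. The inner automorphism by a central element is trivial, so the lift is unique and coincides with the Drinfeld automorphism.

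The main obstacle is this last centralizer computation in the profinite setting. It requires knowing that centralizers of full twists of sub-braid groups in $\B_n$ behave as in the discrete case. Rather than prove it directly, I would first try to avoid it. Theorem \ref{Amovetheorem} is presumably proved by defining $F_{\mathcal{B}_0}$ explicitly and then propagating along A-moves. If $F_{\mathcal{B}_0}$ is defined as the Drinfeld action, or is built by the same formula \eqref{drinfeld}, the lemma is immediate. Failing that, I would use induction on $n$ via the inclusion $\B_{n-1}\subset\B_n$ onto the first $n-1$ strands, which is compatible with $\mathcal{B}_0$. The new generator $\sigma_{n-1}$ is then controlled by the A-move formula \eqref{Amove} relating the brackets $(\ldots,x_{n-1}),x_n$ and $\ldots,(x_{n-1},x_n)$. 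That relation yields precisely the conjugation by $f(\sigma_{n-1}^2,\eta_{n-1})$ appearing in \eqref{drinfeld}.
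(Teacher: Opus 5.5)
Your main argument has a genuine gap. Property (i) of Theorem~\ref{Amovetheorem} for $\mathcal{B}_0$ does not single out a lift, so checking $F(\omega_i)=\omega_i^\lambda$ for the Drinfeld automorphism says nothing about whether $F=F_{\mathcal{B}_0}$. The centralizer claim you rely on is false already in the discrete braid group, so this is not a profinite technicality. The elements $\sigma_1,\omega_2,\ldots,\omega_n$ pairwise commute, because each $\omega_i$ is central in the braid group on the first $i$ strands. They therefore generate an abelian subgroup, which lies in its own centralizer. Concretely, $\omega_{n-1}$ commutes with $\sigma_1,\ldots,\sigma_{n-2}$. Hence it commutes with every $\omega_i^\lambda$ and with every flat crossing attached to a bracket of $\mathcal{B}_0$, but it does not commute with $\sigma_{n-1}$. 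So ${\rm inn}(\omega_{n-1})\circ F$ is a second lift of $\overline{F}$ satisfying (i) and (ii) for $\mathcal{B}_0$. Your fallback, that $F_{\mathcal{B}_0}$ is ``presumably defined as'' the Drinfeld action, simply assumes the conclusion.

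What actually pins down $F_{\mathcal{B}_0}$ is its relation to neighbouring lifts through (ii) and (iii). That is the idea in your last sentences, and it is exactly the paper's proof. However, no induction on $n$ is needed. The induction you describe would also require the lifts to be compatible with $\B_{n-1}\subset\B_n$, which Theorem~\ref{Amovetheorem} does not provide. Instead, work directly in $\B_n$ for each $2\le i\le n-1$. Replacing the bracket around $1,\ldots,i$ in $\mathcal{B}_0$ by one around $i,i+1$ is an A-move to a bracketing $\mathcal{B}_i$ containing $(x_i,x_{i+1})$. Then (ii) gives $F_{\mathcal{B}_i}(\sigma_i)=\sigma_i^\lambda$, and (iii) gives $F_{\mathcal{B}_i}={\rm inn}\,f(\omega_i,\sigma_i^2)\circ F_{\mathcal{B}_0}$. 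Hence $F_{\mathcal{B}_0}(\sigma_i)=f(\sigma_i^2,\omega_i)\,\sigma_i^\lambda\, f(\omega_i,\sigma_i^2)$. The step you left implicit is replacing $\omega_i$ by $\eta_i$. Write $\omega_i=\omega_{i-1}\eta_i$ and note that $\omega_{i-1}$ commutes with both $\eta_i$ and $\sigma_i^2$. Since $f$ lies in the derived subgroup (as in the Haiku Lemma), this gives $f(\omega_i,\sigma_i^2)=f(\eta_i,\sigma_i^2)$. Together with $F_{\mathcal{B}_0}(\sigma_1)=\sigma_1^\lambda$ from the bracket $(x_1,x_2)$, this matches \eqref{drinfeld} on all generators.
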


\begin{proof}
In view of Theorem \ref{Amovetheorem} (i), 
$F_{\mathcal{B}_0}$ acts via $\lambda$ on $\sigma_1$ and $\omega_i$ for 
$2\le i \le n$. To determine the action of $F_{\mathcal{B}_0}$ on the other 
generators $\sigma_i$ of $\B_n$, we note that for each $i>1$, the modification 
of ${\mathcal{B}_0}$ by erasing the single pair of brackets surrounding points 
$1,\ldots,i$ and replacing it by the pair of brackets surrounding the two points
$i$ and $i+1$ is an A-move. Letting ${\mathcal{B}_i}$ denote the 
bracketing obtained by this A-move for $i=2,\ldots,n-1$,
Theorem \ref{Amovetheorem} (iii) tells us that $F_{\mathcal{B}_0}
={\rm inn}\,f(T_{1,\ldots,i},\sigma_i^2)\circ {\mathcal{B}_0}$. The full 
twist $T_{i,i+1}$ on a pair of consecutive strands is $\sigma_i^2$,
and the full twist $T_{1,\ldots,i}$ is equal to $\omega_i$. In the braid group 
$\B_n$, the $\omega_i$ and $\eta_i$ all commute among themselves and with each other and
satisfy the identities $w_{i-1}\eta_i=\omega_i$. Furthermore $w_{i-1}$ commutes
with $\sigma_i$. So $f(\omega_i,\sigma_i^2)=f(w_{i-1}\eta_i,\sigma_i^2)=
f(\eta_i,\sigma_i^2)$ (since $w_{i-1}$ commutes with both $\eta_i$ and $\sigma_i^2$
and $f$ is a (profinite) commutator). Therefore the action of
$F_{\mathcal{B}_0}$ is exactly the action given in \eqref{drinfeld}.
\end{proof}

Finally, we have the following proposition, which is a special case 
of a much more general principle, which we will need in \S\S 5,6 below.

\vspace{.2cm}
\begin{thm}\label{doublestrandsubgroup} Let $n=8$, let $X_1=(x_1,x_2)$, $X_2=(x_3,x_4)$,
$X_3=(x_5,x_6)$ and $X_4=(x_7,x_8)$, and fix a maximal bracketing
$${\mathcal{B}}=((X_1,X_2),X_3),X_4=(((x_1,x_2),(x_3,x_4)),(x_5,x_6)),
(x_7,x_8).$$
Set

\vspace{-0.8cm}
\begin{equation}\label{t12etc}
\begin{cases}
\tau_1=\sigma_{X_1,X_2}=\sigma_2\sigma_1\sigma_3\sigma_2\\
\tau_2=\sigma_{X_2,X_3}=\sigma_4\sigma_3\sigma_5\sigma_4\\
\tau_3=\sigma_{X_3,X_4}=\sigma_6\sigma_5\sigma_7\sigma_6.
\end{cases}
\end{equation}
Then 

\vspace{.1cm}\noindent 
(i) The subgroup $\langle \tau_1,\tau_2,\tau_3\rangle\subset
\B_8$ is isomorphic to $\B_4$ and the action of
$F_{\mathcal{B}}$ on that subgroup is the Drinfeld action \eqref{drinfeld}
with $\sigma_i$ replaced by $\tau_i$.

\vspace{.1cm}\noindent 
(ii) $F_{\mathcal{B}}$ is related to the standard Drinfeld automorphism $F=F_{\mathcal{B}_0}$ 
by 
$$F_{\mathcal{B}}={\rm inn}\,\bigl(f(\eta_3,\sigma_3^2)f(\eta_5,\sigma_5^2)f(\eta_7,\sigma_7^2)\bigr)\circ F.$$
\end{thm}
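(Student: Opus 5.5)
We treat the two parts separately, using Theorem~\ref{Amovetheorem} as the main tool.

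\emph{Part (i).} First, the map $\sigma_i\mapsto\tau_i$ ($i=1,2,3$) defines a homomorphism $B_4\to B_8$. The $\tau_i$ are the flat crossings of adjacent double-strand packets. Two non-adjacent crossings involve disjoint packets and commute. For adjacent ones, the braid relation $\tau_1\tau_2\tau_1=\tau_2\tau_1\tau_2$ holds because both sides are the geometric braid exchanging packets $X_1$ and $X_3$ by the same isotopy class of flat crossings. Injectivity in the discrete case is the standard cabling fact: the image is the subgroup of braids that are ``cables'' of a 4-strand braid, and forgetting one strand from each pair gives a left inverse. Injectivity after profinite completion follows from the same left inverse, which is defined on $\B_8$ on the subgroup preserving the pairing.

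Next we compute $F_{\mathcal B}(\tau_i)$. By Theorem~\ref{Amovetheorem}(ii), since $(X_1,X_2)$ is a bracketed configuration in $\mathcal B$, we get $F_{\mathcal B}(\tau_1)=\tau_1^\lambda$. For $\tau_2$ and $\tau_3$ we perform A-moves. The A-move $((X_1,X_2),X_3)\to(X_1X_2\ldots)$ replaces the bracket around $X_1X_2$ by a bracket around $X_2X_3$, giving a bracketing $\mathcal B'$ in which $(X_2,X_3)$ is bracketed. By (ii), $F_{\mathcal B'}(\tau_2)=\tau_2^\lambda$, and by (iii),
$$F_{\mathcal B}=\mathrm{inn}\,f(T_{X_2X_3},T_{X_1X_2})\circ F_{\mathcal B'}.$$
Hence $F_{\mathcal B}(\tau_2)=f(T_{1..4},T_{3..6})\,\tau_2^\lambda\,f(T_{3..6},T_{1..4})$. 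Now express the twists through $\tau$'s:
$$T_{3..6}=\tau_2^2T_{X_2}T_{X_3},\qquad T_{1..4}=\tau_1^2T_{X_1}T_{X_2}.$$
The small twists $T_{X_j}$ commute with all $\tau$'s and with each other. Since $f$ lies in the closure of the commutator subgroup of $\widehat F_2$, the central-like factors can be dropped, exactly as in the proof of the lemma above. This gives $f(\tau_1^2,\tau_2^2)\tau_2^\lambda f(\tau_2^2,\tau_1^2)$, which is Drinfeld's formula with $\eta_2=\tau_1^2$.

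For $\tau_3$ we do the analogous A-move on the outer bracket $((X_1X_2X_3),X_4)\to$ bracket $(X_3,X_4)$. The twists become $T_{1..6}$ and $T_{5..8}$. Writing $T_{1..6}=\eta_3(\tau)\cdot T_{1..4}\cdot T_{X_3}$ with $\eta_3(\tau)=\tau_2\tau_1^2\tau_2$, and noting that $T_{1..4}$ commutes with $\tau_3$ and $\eta_3(\tau)$, we reduce to $f(\eta_3(\tau),\tau_3^2)$. The point to check carefully is that every discarded factor commutes with both arguments; this is where the nested structure of $\mathcal B$ is used.

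\emph{Part (ii).} We connect $\mathcal B$ to $\mathcal B_0$ by a sequence of A-moves and compose the inner automorphisms given by (iii). Write $\mathcal B_0=((((((x_1x_2)x_3)x_4)x_5)x_6)x_7)x_8$. The bracketing $\mathcal B$ differs from it only in the brackets $(x_3x_4)$, $(x_5x_6)$, $(x_7x_8)$, which replace $(x_1x_2x_3)$, $(x_1..x_5)$, $(x_1..x_7)$. Each replacement is one A-move, and the three moves can be carried out in order $x_7x_8$, then $x_5x_6$, then $x_3x_4$ (or any order, since they commute). Each move contributes $\mathrm{inn}\,f(T_{1..k},\sigma_k^2)$ for $k=3,5,7$. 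As in the lemma, $f(\omega_k,\sigma_k^2)=f(\eta_k,\sigma_k^2)$.

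One subtlety remains: after the first moves, the twist appearing in the next move is still $T_{1..k}=\omega_k$. This holds because the bracket being removed is always an initial segment $x_1..x_k$ in the intermediate bracketing. It needs checking that the three inner factors commute, or at least appear in the stated order. Indeed, $f(\eta_k,\sigma_k^2)$ lies in $\B$ of strands $1..k+1$, and the order of composition must match the product $f_3f_5f_7$. This is the main obstacle; I would verify it by performing the moves in the order $k=3,5,7$ and tracking, via (iii), that each step's conjugator is unaffected by the previous ones.
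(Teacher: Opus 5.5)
Your strategy is the paper's own: the same auxiliary bracketings $\mathcal{B}'$ and $\mathcal{B}''$, the flat-crossing property of Theorem~\ref{Amovetheorem}(ii), the A-move formula (iii), removal of commuting factors from the arguments of $f$, and, for (ii), the same three A-moves out of $\mathcal{B}_0$. Your cabling argument for $\langle\tau_1,\tau_2,\tau_3\rangle\cong\B_4$ adds something the paper takes for granted. Make the left inverse precise as ``keep the strands starting at odd positions,'' defined on the finite-index subgroup of braids whose permutation preserves $\{1,3,5,7\}$. Merely preserving the pairing allows swaps inside a pair, and then forgetting is not multiplicative. The closure of that subgroup is open in $\B_8$ and contains the $\tau_i$.

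Two steps of (i) are wrong as written. First, you unwind the conjugation in the wrong order. Use $\mathrm{inn}(g)(x)=gxg^{-1}$, the convention under which (iii) applied to $\mathcal{B}_0$ reproduces \eqref{drinfeld}. Then your relation $F_{\mathcal B}=\mathrm{inn}\,f(T_{3..6},T_{1..4})\circ F_{\mathcal B'}$ gives $F_{\mathcal B}(\tau_2)=f(T_{3..6},T_{1..4})\,\tau_2^\lambda\,f(T_{1..4},T_{3..6})$, which reduces to $f(\tau_2^2,\tau_1^2)\tau_2^\lambda f(\tau_1^2,\tau_2^2)$. The expression you wrote, $f(\tau_1^2,\tau_2^2)\tau_2^\lambda f(\tau_2^2,\tau_1^2)$, is not Drinfeld's formula, and $\tau_3$ needs the same correction. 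Second, the small twists $T_{X_j}$ do not commute with all the $\tau_i$: $\tau_3T_{X_3}\tau_3^{-1}=T_{X_4}$. What is true, and all you need, is that they commute with the packet-pure elements $\tau_i^2$ and $\tau_2\tau_1^2\tau_2$, and with one another. In the $\tau_3$ step the relevant checks are that $\sigma_5^2$ commutes with $\tau_3^2$ (not with $\tau_3$), and that the discarded factors $T_{1..4}T_{X_3}$ and $T_{X_3}T_{X_4}$ commute with each other, so their $f$ is trivial.

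In (ii) you leave the ordering of the three conjugators as an unresolved ``main obstacle,'' and your sketched plan does not settle it. Set $f_k:=f(T_{1..k},\sigma_k^2)$. Doing the moves in the order $k=3,5,7$ composes to $\mathrm{inn}(f_7f_5f_3)\circ F$, the reverse of the stated product. No ``tracking'' is needed either: (iii) gives each conjugator directly from the brackets $\{1,\dots,k\}$ and $\{k,k+1\}$, which are the same in any order.

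The missing observation is the one behind the paper's phrase ``pairwise commutative A-moves.'' Take $k<l$ in $\{3,5,7\}$. $T_{1..l}$ is central in the braid group on strands $1,\dots,l\supseteq\{1,\dots,k+1\}$. $\sigma_l^2$ lives on strands $l,l+1$, disjoint from $1,\dots,k+1$. So both commute with $T_{1..k}$ and $\sigma_k^2$, and hence $f_l$ commutes with $f_k$. (Alternatively, perform the moves in the order $7,5,3$; each is still a legitimate A-move.) The order is then irrelevant, and $f(\omega_k,\sigma_k^2)=f(\eta_k,\sigma_k^2)$ gives exactly $\mathrm{inn}\bigl(f(\eta_3,\sigma_3^2)f(\eta_5,\sigma_5^2)f(\eta_7,\sigma_7^2)\bigr)\circ F$.
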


\begin{proof} (i) The packets $X_1$ and $X_2$ appear in ${\mathcal{B}}$, 
and $\tau_1=\sigma_{X_1,X_2}$, so by \eqref{flatcrossing} we have
$F_{\mathcal{B}}(\tau_1)=\tau_1^\lambda$. For $i=2,3$, 
let $Y_i=\tau_i\cdots \tau_1^2\cdots \tau_i$ .

Consider the maximal bracketings
\begin{equation}\label{threebracketings}
\begin{cases}
{\mathcal{B}}=(((x_1,x_2),(x_3,x_4)),(x_5,x_6)),(x_7,x_8)\\
{\mathcal{B}'}=((x_1,x_2),((x_3,x_4),(x_5,x_6))),(x_7,x_8)\\
{\mathcal{B}''}=((x_1,x_2),(x_3,x_4)),((x_5,x_6),(x_7,x_8)).
\end{cases}
\end{equation}
The second one differs from the first by the single A-move changing the pair 
of brackets containing $1,2,3,4$ to the pair containing $3,4,5,6$, 
and the third one differs from the first by the single A-move changing
the pair of brackets containing $1,2,3,4,5,6$ to the pair containing
$5,6,7,8$.  Therefore by Theorem \ref{Amovetheorem}, we have
\begin{equation}\label{inns}
\begin{cases}
F_{\mathcal{B}'}={\rm inn}\,f(T_{1234},T_{3456})\circ F_{\mathcal{B}}\\
F_{\mathcal{B}''}={\rm inn}\,f(T_{123456},T_{5678})\circ F_{\mathcal{B}}.
\end{cases}
\end{equation}
By Theorem \ref{Amovetheorem} (ii), we have
\begin{equation}\label{tausigmas}
\begin{cases}
T_{1234}=\sigma_{X_1,X_2}^2\sigma_1^2\sigma_3^2=\tau_1^2\sigma_1^2\sigma_3^2\\
T_{3456}=\sigma_{X_2,X_3}^2\sigma_3^2\sigma_5^2=\tau_2^2\sigma_3^2\sigma_5^2\\
T_{5678}=\sigma_{X_3,X_4}^2\sigma_5^2\sigma_7^2=\tau_3^2\sigma_5^2\sigma_6^2.
\end{cases}
\end{equation}

Since $X_3$ and $X_4$ appear in ${\mathcal{B}'}$ and
$\sigma_{X_3,X_4}=\tau_2$, we have
$F_{\mathcal{B}'}(\tau_2)=\tau_2^\lambda$
by \eqref{flatcrossing}. 
Since $\sigma_1^2$, $\sigma_3^2$ and $\sigma_5^2$ commute with $T_{1234}$ and
$T_{3456}$ and $Y_2=\tau_1^2$, we find\footnote{This standard commutation
result is spelled out in (ii) of the Haiku Lemma in Section 5 below.}
$$f(T_{1234},T_{3456})=f(\tau_1^2\sigma_1^2\sigma_3^2,\tau_2^2
\sigma_3^2\sigma_5^2)=f(\tau_1^2,\tau_2^2)=f(Y_2,\tau_2^2).$$
Thus 
$$F_{\mathcal{B}}(\tau_2)=f(\tau_2^2,Y_2)\tau_2^\lambda f(Y_2,\tau_2^2).$$

It remains only to compute $F_{\mathcal{B}}(\tau_3)$ using \eqref{tausigmas}
and the second line of \eqref{inns}. For this, we make the full twist
$T_{123456}$ explicit: 
$$T_{123456}=\sigma_{X_1,X_2}^2\sigma_{X_2,X_3}\sigma_{X_1,X_2}^2
\sigma_{X_2,X_3}\sigma_1^2 \sigma_3^2\sigma_5^2$$
(best checked by braiding). Since $\sigma_1^2$, $\sigma_3^2$ and
$\sigma_5^2$ commute with $T_{123456}$ and $T_{5678}$ we have
$$f(T_{123456},T_{5678})=f\bigl(\sigma_{X_1,X_2}^2\sigma_{X_2,X_3}
\sigma_{X_1,X_2}^2\sigma_{X_2,X_3},\sigma_{X_3,X_4}^2\bigr).$$
Because $\sigma_{X_1,X_2}^2$ commutes with both arguments, we can remove the
left-hand multiplication by this term in the left argument to get
$f(\sigma_{X_2,X_3}\sigma_{X_1,X_2}^2\sigma_{X_2,X_3},\sigma_{X_3,X_4}^2)$. 
Using
$\tau_2=\sigma_{X_2,X_3}$ and $\tau_3=\sigma_{X_3,X_4}$, we can then write
\begin{equation}\label{goodform}
f(T_{123456},T_{5678})=f(\tau_2\tau_1^2\tau_2,\tau_3^2)=f(Y_3,\tau_3^2).
\end{equation}
Since $\sigma_{X_3,X_4}=\tau_3$ and the packets $(X_3,X_4)$ form part of 
${\mathcal{B}''}$, we have $F_{\mathcal{B}''}(\tau_3)=\tau_3^\lambda$.
Thus the second line of \eqref{inns} together with \eqref{goodform} 
tell us that
$$F_{\mathcal{B}}(\tau_3)=f(\tau_3^2,Y_3)\tau_3^\lambda f(Y_3,\tau_3^2).$$
This concludes the proof. 

\vspace{.1cm}\noindent
(ii) This follows directly from Theorem \ref{Amovetheorem}, noting
that the maximal bracketing ${\mathcal{B}}$ is derived from the standard maximal 
bracketing ${\mathcal{B}_0}$ by three pairwise commutative A-moves: changing the
pair of brackets surrounding $1,2,3$ to one surrounding $3,4$, changing the pair of
brackets surrounding $1,2,3,4,5$ to one surrounding $5,6$, and changing the pair of
brackets surrounding $1,2,3,4,5,6,7$ to one surrounding $7,8$:
\begin{align*}
((((((&x_1,x_2),x_3),x_4),x_5),x_6),x_7),x_8\mapsto
(((((x_1,x_2),(x_3,x_4)),x_5),x_6),x_7),x_8\\
&\mapsto ((((x_1,x_2),(x_3,x_4)),(x_5,x_6)),x_7),x_8\mapsto
(((x_1,x_2),(x_3,x_4)),(x_5,x_6)),(x_7,x_8).
\end{align*}
\end{proof}

\section{Presentation of the pure mapping class group $\Gamma_{g,1}$}

\subsection{Wajnryb's presentation of $\Gone$}

In this background section, we recall the Wajnryb-Gervais presentation of 
$\Gamma_{g,1}$ in terms of the generators shown in Figure 1 above for the 
case $n=1$, so without any of the Dehn twists $g_i$ along curves $\gamma_i$.

\begin{thm}[Wajnryb \cite{W}] 
\label{wajnryb}
Let $g>1$. The following is a finite presentation for the mapping class group 
$\Gone$. The generators are the Dehn twists $a_1,a_2,\ldots,a_{2g}$ and $d$ 
along the corresponding simple loops shown in Figure 1;
see also Figure 2, with $d_2=d$. Define
\begin{equation}\label{wajnrybdefs}
\begin{cases}
y_k=a_{k-1}a_{k-2}\cdots a_2a_1^2a_2\cdots a_{k-1}a_k\hbox{ for }k=2,\ldots,2g+1\\
t_k=a_{2k}a_{2k-1}a_{2k+1}a_{2k}\hbox{ for }k=1,\ldots,g-1\\
d_3=(t_1^{-1}t_2^{-1}d_2t_2t_1)(t_2^{-1}d_2t_2)d_2a_1^{-1}a_3^{-1}a_5^{-1}\\
d'_2=y_5d_2y_5^{-1}\\
d'_3=y_7d_3y_7^{-1}.
\end{cases}
\end{equation}
Then the following set of relations between the above generators gives a presentation of $\Gone$:

\vspace{.1cm}\noindent
\rm{(A)} \ \ \ $a_ia_{i+1}a_i=a_{i+1}a_ia_{i+1}$ for $i=1,\ldots,2g-1$,

\vspace{.1cm}
\qquad $a_i$ commutes with $a_j$ for $|i-j|\ge 2$,

\vspace{.1cm}
\qquad $d_2a_4d=a_4d_2a_4$,

\vspace{.1cm}
\qquad $d_2$ commutes with $a_j$ for $j\ne 4$;

\vspace{.1cm}\noindent
\rm{(B)}  \ \ \ $\,d_2d'_2=(a_1a_2a_3)^4$; 

\vspace{.1cm}\noindent
\rm{(C)} \ \ \ $\,$(only needed for $g\ge 3$) $d_3a_6d_3=a_6d_3a_6$;

\vspace{.1cm}\noindent
\rm{(C$'$)} \ \ \  (only needed for $g\ge 4$) $d'_2$ commutes with $(t_2t_1t_3t_2)d_2(t_2t_1t_3t_2)^{-1}$. 

\end{thm}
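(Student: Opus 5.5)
Since this is Wajnryb's theorem, quoted from \cite{W} and corrected and streamlined in \cite{G}, the natural plan is to reproduce the standard route. One first obtains some presentation of $\Gone$ from its action on a simply connected complex. One then reduces that presentation, by Tietze transformations, to the generators $a_1,\ldots,a_{2g},d$ and the relations (A), (B), (C), (C$'$).

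\textbf{Step 1: a presentation from a complex.} I would use the Hatcher--Thurston cut-system complex $X$ of $\Sigma_{g,1}$. Its vertices are isotopy classes of cut systems (unordered $g$-tuples of disjoint nonseparating loops with connected complement). Its edges are simple moves, and its $2$-cells are triangles, squares and pentagons. The key input is that $X$ is connected and simply connected. Granting this, Brown's theorem on groups acting on simply connected complexes gives a presentation of $\Gone$ in terms of three pieces of data:
\begin{enumerate}
\item[(a)] the stabilizer of a base cut system $\{\alpha_1,\alpha_3,\ldots,\alpha_{2g-1}\}$;
\item[(b)] one element carrying the base vertex to each neighbour in a set of edge-orbit representatives;
\item[(c)] one relation for each orbit of $2$-cells.
\end{enumerate}

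\textbf{Step 2: the vertex stabilizer.} The stabilizer is an extension of a braid and permutation group acting on the $g$ handles by the mapping class group of a sphere with $2g$ holes and one marked point. By induction on complexity, it is presented using the twists $a_{2k-1}$, the handle swaps built from the $t_k$, and twists along boundary-like loops. Genus-zero pieces of this induction produce lantern relations, and (B) (with $d_2'=y_5d_2y_5^{-1}$) is precisely the genus-$2$ chain/lantern-type relation that appears here. Each auxiliary twist is rewritten as a conjugate of a generator by words in the $a_i$ and $t_k$; this is where $d_3$, $d_2'$ and $d_3'$ enter.

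\textbf{Step 3: edges and $2$-cells.} Edge stabilizers and edge elements are expressed in the chosen generators, using $a_{2k}$ as the twist realizing a simple move. Each $2$-cell relation is then translated into a word relation:
\begin{enumerate}
\item[(i)] triangles give the braid relations in (A);
\item[(ii)] squares give commutation relations, whose conjugates reduce to (A), or to (C$'$) when four handles are involved (hence $g\ge4$);
\item[(iii)] pentagons give chain relations, reducing to (B) and, for three handles, to (C).
\end{enumerate}

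\textbf{Step 4: Tietze reductions.} One then shows that every relation so produced follows from (A)--(C$'$). The method is to exploit the transitivity of $\Gone$ on configurations of curves in order to conjugate each relation back to one of the listed ones supported on at most four handles. This explains why (C) is needed only for $g\ge3$ and (C$'$) only for $g\ge4$.

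\textbf{Main obstacle.} I expect two steps to be the main difficulty. The first is the simple connectivity of the cut-system complex, which is the deep Hatcher--Thurston input; I would simply quote it. The second is the combinatorial bookkeeping in Step 4, where errors historically crept into \cite{W}. As a cross-check, I would first derive Gervais's presentation \cite{G}, whose generators are all twists along nonseparating loops and whose relations are braid, lantern and chain relations. I would then verify that each such twist is a conjugate of $a_1$ by a word in the generators, and that each Gervais relation is a conjugate of (A), (B), (C) or (C$'$).
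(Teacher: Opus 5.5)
There is a genuine gap. You treat the statement as Wajnryb's published presentation and plan to rerun his whole cut-system argument, but the presentation here is \emph{not} the one in \cite{W}. Wajnryb's third relation is a single, more complicated relation. Here, instead, $d_3$ is \emph{defined} as the lantern product $(t_1^{-1}t_2^{-1}d_2t_2t_1)(t_2^{-1}d_2t_2)d_2a_1^{-1}a_3^{-1}a_5^{-1}$, and his relation is replaced by two different ones: the braid relation (C) for this product, and the commutation (C$'$).

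So what needs proof is that this substitution does not change the group. The paper does this with one observation: Wajnryb uses his (C) only to derive two consequences, his relations (18) and (30), which are exactly (C$'$) and (C) here. Both hold in $\Gone$: the lantern product is the twist along $\delta_3$, and $t_2t_1t_3t_2$ carries $\delta_2$ to a loop disjoint from $\delta'_2$. Hence the group defined by (A), (B), (C), (C$'$) maps onto $\Gone$. Wajnryb's argument then goes through verbatim, with (18) and (30) taken as hypotheses instead of being derived.

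Your proposal never addresses this. Step 4 (``every relation so produced follows from (A)--(C$'$)'') is the whole content of the theorem, and it is asserted rather than argued. Your bookkeeping also does not fit the stated presentation. $d_3$ is not ``a conjugate of a generator'' but a lantern product, so the lantern relation is built in by definition. What must be checked is that (C) and (C$'$) supply everything about this product that the reduction uses. Similarly, pentagon $2$-cells of the Hatcher--Thurston complex are supported in a genus-two subsurface. They are therefore not a natural source for (C), which involves the third handle through $a_6$ and $\delta_3$.

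To close the gap you have two options. You could carry out Wajnryb's reduction in full and verify, at each place where his (C) is invoked, that (C) and (C$'$) suffice; this is exactly the paper's remark. Or, far more economically, you could cite \cite{W} together with that remark, and then no re-derivation from the cut-system complex is needed.
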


\begin{proof}
This presentation can easily be deduced from Wajnryb's
presentation in \cite{W}.  In fact, the original relation (C) as written by Wajnryb is split here into 
two separate, simpler relations. This is possible because Wajnryb only uses his (C) to prove 
two further relations, numbered (18) and (30) in his proof, and we have chosen to use those two 
here instead of the original more complex form.  His (30) is our (C), and the relation in 
(18) which he proves using his (C) is our (C$'$). More precisely, in Wajnryb's notation, he states this
as the commutation of $e$ with $d_{34}$;  here $e$ is our $d'$ and $d_{34}$ is $(t_2t_1t_3t_2)d(t_2t_1t_3t_2)^{-1}$.
\end{proof}

\begin{figure}[H]
    \centering
    \includegraphics[width=0.6\textwidth]{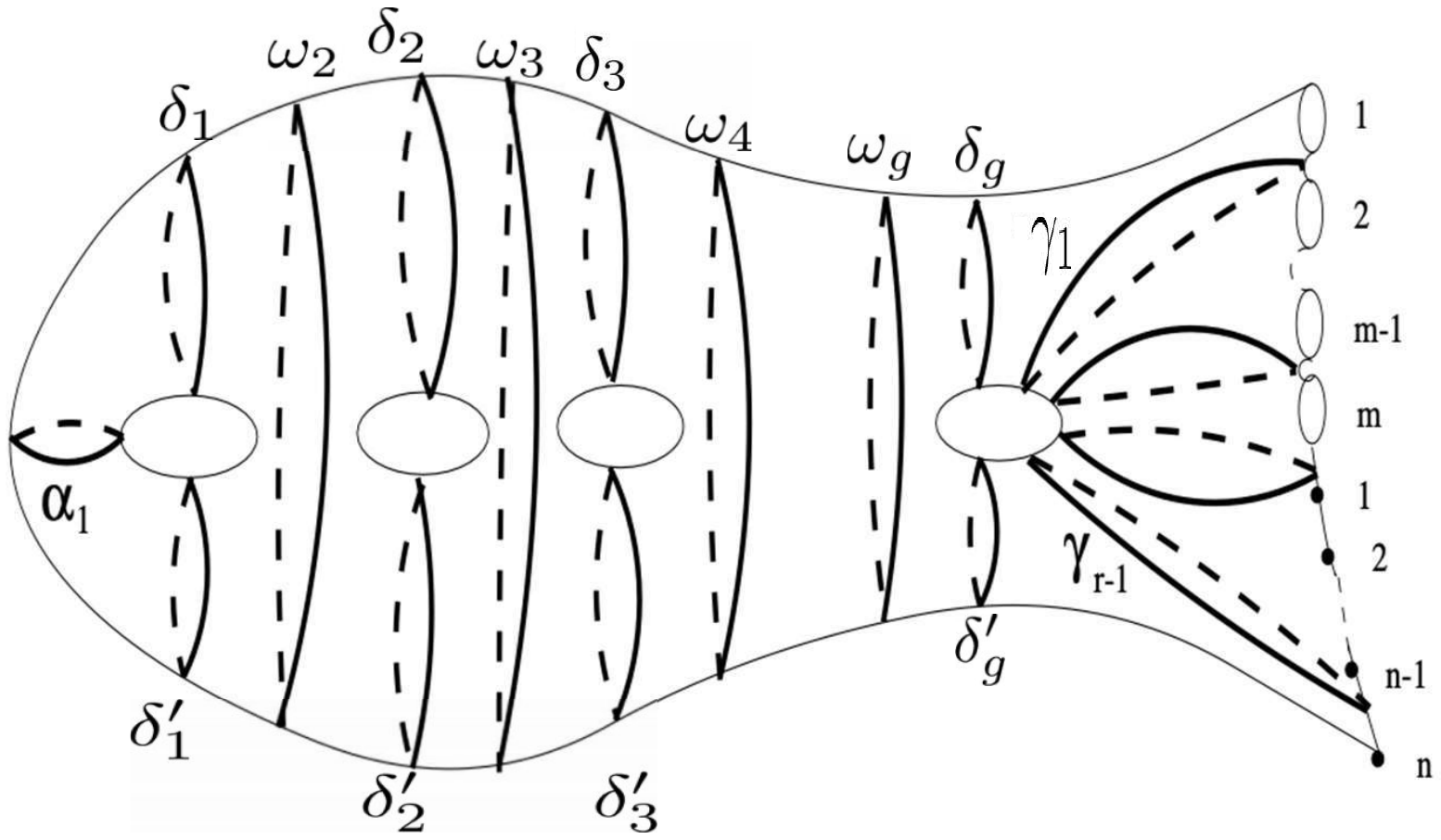}
    \caption{The loops $\delta_k$, $\delta'_k$ and $\omega_k$ (where $\alpha_1=\delta_1=\delta_1'$)}
    \label{fig:Fish2}
\end{figure}

\noindent {\bf Remark about notational conventions.} Wajnryb \cite{W} uses right twists
along a (simple) loop $\gamma$ in the mapping class
group that acts on the surface on the right: if we have twists $g,g'$ along 
loops $\gamma,\gamma'$ respectively, then the twist along the loop
$(\gamma')g$ is given by $g^{-1}g'g$.
In the following discussions, we switch this convention
(to follow most other literature) to the opposite, in which left twists along loops act
on the surface on the left, and a twist $g$ acting on a loop $\gamma'$ gives
a loop $g(\gamma')$ whose associated twist is $gg'g^{-1}$.
In particular, this changes the order of multiplication (or composition)
of twists: if $(g_1,\ldots,g_n)$ are right twists along a sequence of loops
$(\gamma_1,\dots,\gamma_n)$, then the composition of these twists acting
by $g_1,\dots, g_n$ in that order would be written as $g_1\cdots g_n$ in Wajnryb's 
convention (right action), but the same action
in our convention
 (right twists $g_i^{-1}$ acting on the left in the same order)  is written as 
 $g_n^{-1}\cdots g_1^{-1} =(g_1\cdots g_n)^{-1}$. This change from 
right to left action and the corresponding change in the notation of
composition has the effect of ensuring that Wajnryb's identities remain the
same in both notations.

\vspace{.3cm}
Let us give a pictorial explanation of some of the relations in Theorem \ref{wajnryb}. We note first that the relations in (A) follow simply from the fact 
that twists along disjoint loops commute,
and that the braid relation $aba=bab$ holds for twists $a,b$ along
any pair of loops $\alpha,\beta$ intersecting in a single point.

Next, we observe that the element $w_k=(a_1\ldots a_{k-1})^k$, image of the center of $B_k$,  is connected to the twist along the 
boundary on the minimal subsurface of $\Gone$ containing the braid loops $\alpha_1,\ldots,\alpha_{k-1}$. 
If $k$ is odd, then $w_k^2$ is the twist along the loop 
$\omega_{(k+1)/2}$ in Figure 2, and if $k$ is even, then
$w_k$ is the twist along the boundary of the surface cut out 
by 
$\delta_{k/2}$ and $\delta'_{k/2}$, 
i.e.~$w_k=d_{k/2}d'_{k/2}$. 
This explains relation (B). 

Relation (C), the braid relation $d_3a_6d_3=a_6d_3a_6$ is clear, but to 
include this relation in the presentation means that $d_3$ must be expressed 
in the generating set $d(=d_2),a_1,\ldots,a_{2g}$ as given in 
\eqref{wajnrybdefs}. For this, we first introduce Figure 3, which shows 
some definitions, loops and identities which will be useful later on.

\begin{figure}[H]
    \centering
\includegraphics[width=0.6\textwidth]{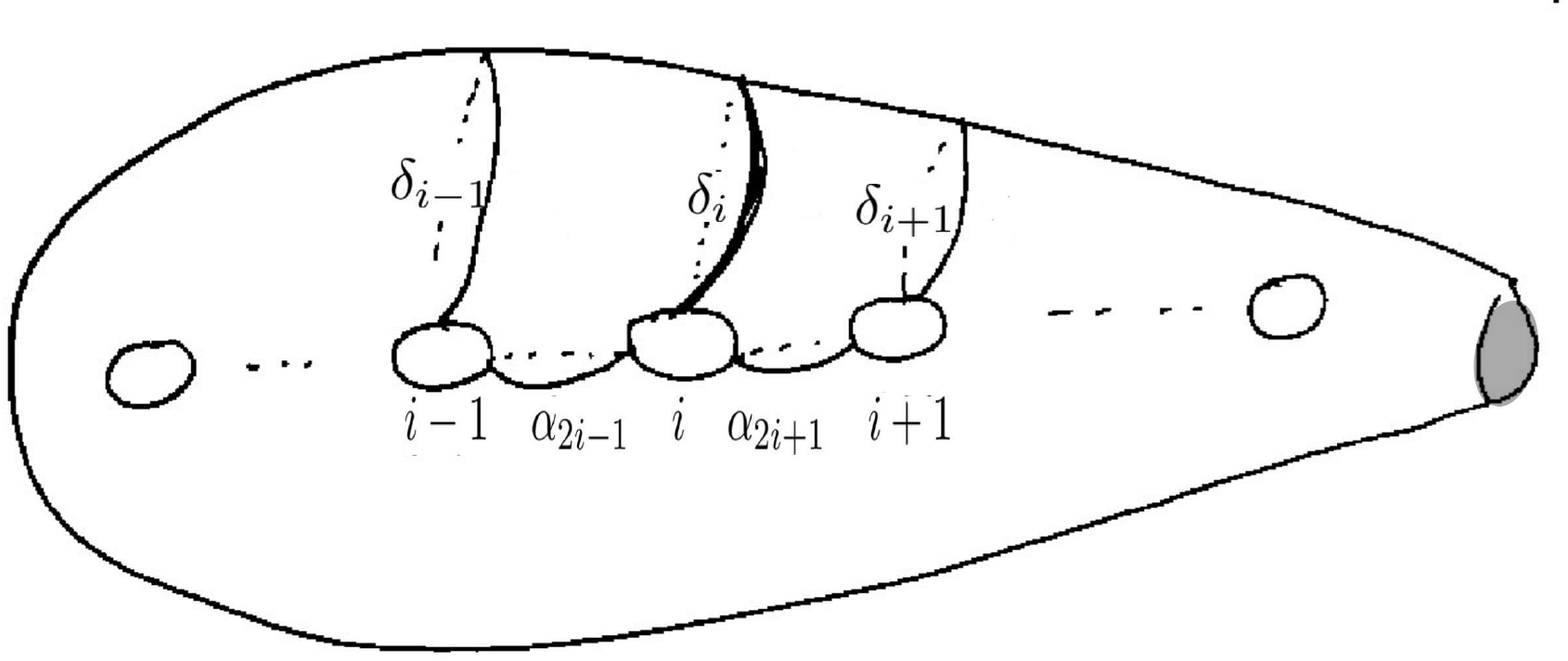}
\includegraphics[width=0.6\textwidth]{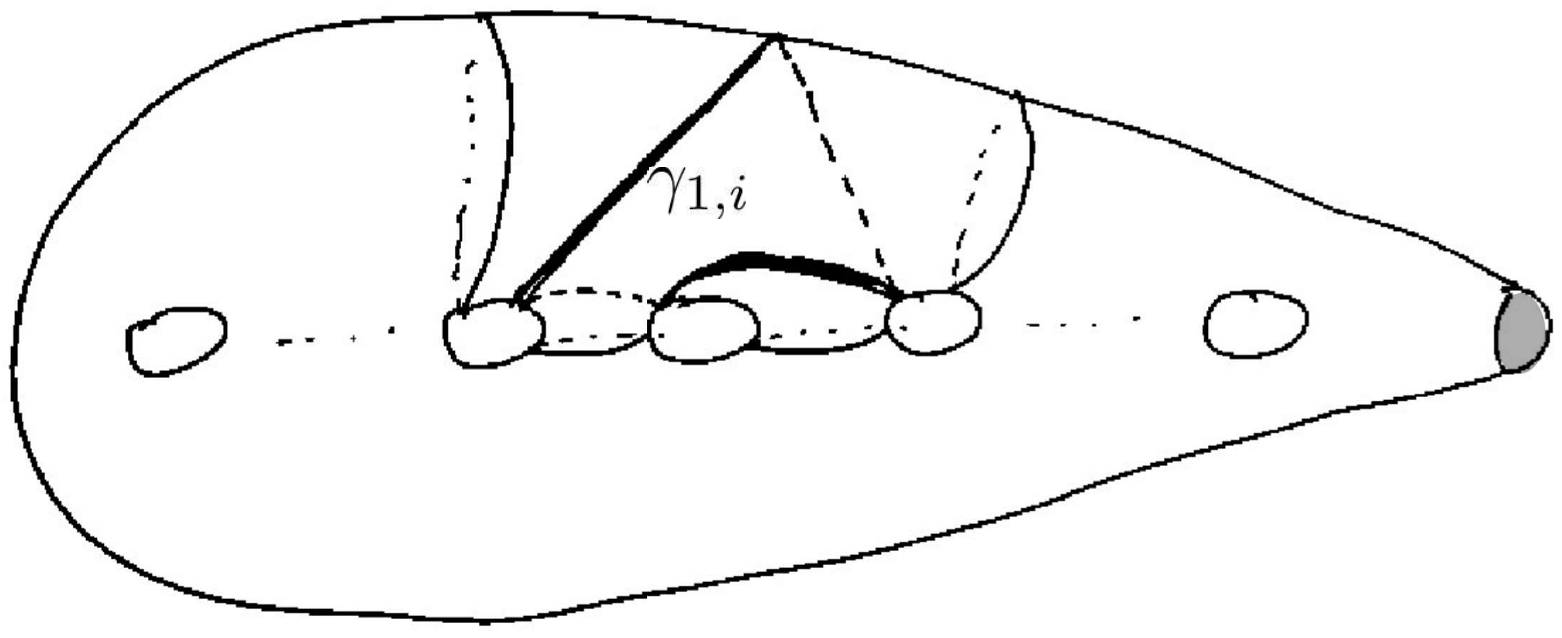}
\includegraphics[width=0.56\textwidth]{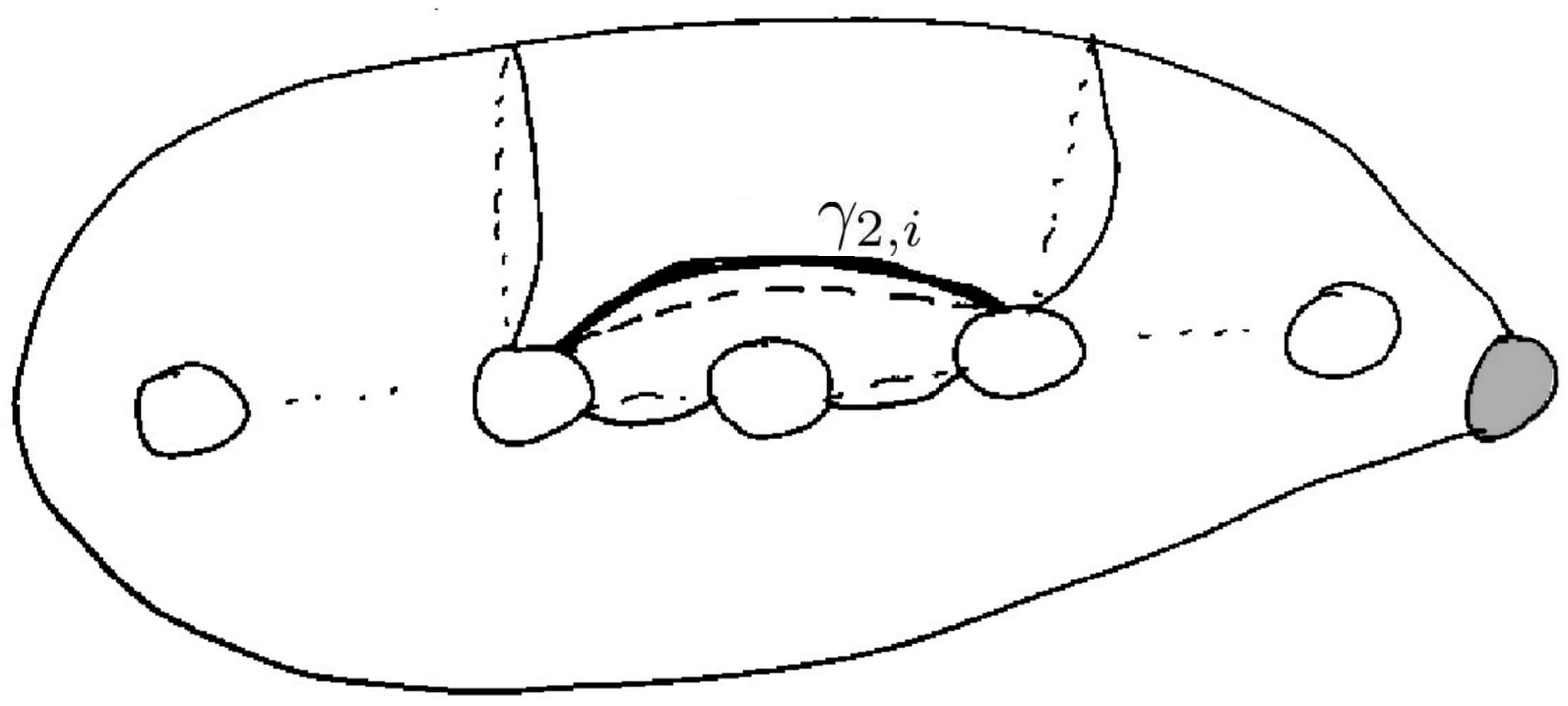}
    \caption{The bold loop $\gamma_{1,i}$ in the middle figure is given by 
$t_i^{-1}(\delta_i)$; its Dehn twist is $t_i^{-1}d_it_i$. The bold loop 
$\gamma_{2,i}$ in the lowest figure is given by $(t_it_{i-1}')^{-1}(\delta_i)$;
its Dehn twist is ${t}_{i-1}^{\prime -1}t_i^{-1}d_it_it'_{i-1}$.  }
    \label{fig:Fish3}
\end{figure}

\vspace{.2cm}
\begin{defn}\label{loops} {\rm We associate three loops to the $i$-th genus hole (see Figure 3).
The first is the loop $\delta_i$ depicted in the top diagram in Figure 3. 
The second is the loop $\gamma_{1,i}$ in the middle diagram in Figure 3,
defined by $\gamma_{1,i}=t_i^{-1}(\delta_i)$
where $t_i=a_{2i}a_{2i-1}a_{2i+1}a_{2i}$. Finally, the bottom loop $\gamma_{2,i}$ is given by $\gamma_{2,i}=t_{i-1}^{\prime -1}\cdot t_i^{-1}(\delta_i)$, where
we define $t'_{i-1}=a_{2i-2}d_{i-1}a_{2i-1}a_{2i-2}$. The three loops
shown in Figure 3 form a lantern, so the product of the three associated Dehn 
twists ${t}_{i-1}^{\prime -1}t_i^{-1}d_it_it'_{i-1}$, $t_i^{-1}d_it_i$ and 
$d_i$ is equal to the product of the boundary twists, given by
$d_{i-1}a_{2i-1}a_{2i+1}d_{i+1}$. } 
\end{defn}

\vspace{.3cm}
\noindent {\bf Remark 3.2.1.}
Let us briefly explain exactly why $t_i^{-1}(\delta_i)$ is the loop $\gamma_{1,i}$. It suffices to make
it clear in the case $i=2$. The diffeomorphisms $t_1a_1a_3=(a_2a_1a_3)^2$ and
$t_2a_3a_5=(a_4a_3a_5)^2$ act simultaneously on the subsurface $S_2$ cut out by $\alpha_1$, $\alpha_3$, $\alpha_5$ 
and $\delta_3$ and on the mirror subsurface $S'_2$ cut out by the same first three loops and $\delta'_3$; indeed,
in the case of $t_1$, we have $(t_1a_1a_3)^2=w_4=d_2d'_2$. However, restricted to the subsurface $S_2$,
the diffeomorphisms $t_1a_1a_3$ and $t_2a_3a_5$ act like half-twists along the loops $\delta_2$
and $\gamma_2$. In terms of braids, the strands $1,\ldots,6$ correspond to ramification points to the left 
and right of the three first genus holes in the $(g,1)$ surface under the rotation around the horizontal axis, 
and the half-twists $t_1a_1a_3$ and $t_2a_3a_5$ have the effect of braiding the corresponding four strands
in a half-twist (usually written $a_1a_2a_1a_3a_2a_1$ resp.~$a_3a_4a_3a_5a_4a_3$, but these 
are equal to $(a_2a_1a_3)^2$ resp.~$(a_4a_3a_5)^2$).

\vspace{.3cm}
Let us now return to the relation given by Wajnryb to express $d_3$ in
terms of the generators $d=d_2,a_1,\ldots,a_{2g}$:
$$d_3=(t_1^{-1}t_2^{-1}d_2t_2t_1)(t_2^{-1}d_2t_2)d_2a_1^{-1}a_3^{-1}a_5^{-1}.$$
The explanation of this complex formula becomes clear in view of 
Definition \ref{loops} with $i=2$.  In the case $i=2$, Figure 3 shows
the three loops $\delta_2$, $\gamma_{1,2}=t_2^{-1}(\delta_2)$ and
$\gamma_{2,2}=t_1^{\prime -1}t_2^{-1}(\delta_2)$. But in fact, although $t'_i\ne t_i$ in
general, it turns out that $t'_1=t_1$ because $\delta_1=\alpha_1$. Therefore
$\gamma_{2,2}=t_1^{-1}t_2^{-1}(\delta_2)$. The associated Dehn twists to these three
loops are then $t_1^{-1}t_2^{-1}d_2t_2t_1$, $t_2d_2t_2^{-1}$ and $d_2$,
and their product is a lantern product and therefore equal to the product
of the four boundary components:
$$t_1^{-1}t_2^{-1}d_2t_2t_1\cdot t_2d_2t_2^{-1}\cdot d_2=a_1a_3a_5d_3.$$
This explains the defining formula for $d_3$ given in 
\eqref{wajnrybdefs}.
 
\vspace{.2cm} The same figure is useful to understand relation (C$'$). 
Here, the braid $t_2t_1t_3t_2$ is best understood as
the flat braid crossing strands $1,2,3,4$ to the right above $5,6,7,8$. This
diffeomorphism, which only exists in genus $\ge 4$, preserves the top half
of the surface. More precisely, it acts on $\delta_2$ sending it to the 
curve $\gamma_{2,3}$ (see Figure~3), which is disjoint from 
$\delta'_2$, so that the two associated twists commute, 
which is the statement of (C$'$). 

\vspace{.3cm}

\section{The Dehn twist $d$ and the case of genus 2}

In this section we begin the study of the action of elements $F=(\lambda,f)\in \GT$
on $\GGone$, as proposed in Theorem \ref{thm11}, by proving that the proposed action of $F$ respects
all the relations in (A), as well as relation (B), which suffices to prove that $F\in\GT$ gives an 
automorphism of the mapping class group $\widehat\Gamma_{2,1}$. We first need some technical results
on homomorphisms of braid groups into $\GGone$, which are given in the first subsection. Observe that Wajnryb's presentation of $\Gone$ is also a topological
presentation for the profinite completion $\GGone$. Since everything concerning
$\GT$ takes place in the context of profinite groups, we only consider the
profinite completions of mapping class groups from here on.

\subsection{Homomorphisms from Artin braid groups into $\GGone$}

The first lemma concerns only the profinite Artin braid group $\B_{2k}$ and a particular quotient of it.
\begin{lem}\label{technical} Let $k\ge 1$. For $2\le i\le 2k$, let
$\eta_i=\sigma_{i-1}\cdots \sigma_1^2\cdots \sigma_{i-1}$ and
$\omega_i=(\sigma_1\cdots \sigma_{i-1})^i$ in $\B_{2k}$. Then
for $F\in \GT$, the automorphism of $\B_{2k}$ given by
\eqref{drinfeld} passes to the quotient $\B'_{2k}:=\B_{2k}/\langle
\omega_{2k-1}^{-1}\eta_{2k}\rangle$, and in the quotient we have
$$F(\sigma_{2k-1})=\sigma_{2k-1}^\lambda.$$
\end{lem}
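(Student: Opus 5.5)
To prove that $F$ descends to $\B'_{2k}$, I will show that $F$ maps the normal subgroup generated by $\omega_{2k-1}^{-1}\eta_{2k}$ into itself. By Theorem \ref{drinfeldaction}, $F(\eta_{2k})=\eta_{2k}^\lambda$ and $F(\omega_{2k-1})=\omega_{2k-1}^\lambda$. The key observation is that $\omega_{2k-1}$ and $\eta_{2k}$ commute: $\omega_{2k-1}$ is central in the subgroup $\langle\sigma_1,\ldots,\sigma_{2k-2}\rangle$, and $\eta_{2k}$ is the pure braid of strand $2k$ encircling strands $1,\ldots,2k-1$, so both are represented by twists along nested disjoint discs. (The proof of the Lemma on $F_{\mathcal{B}_0}$ already uses that the $\omega_i$ and $\eta_j$ all commute.) Hence
$$F(\omega_{2k-1}^{-1}\eta_{2k})=\omega_{2k-1}^{-\lambda}\eta_{2k}^{\lambda}=(\omega_{2k-1}^{-1}\eta_{2k})^\lambda,$$
which lies in the closed normal subgroup $N$ generated by $\omega_{2k-1}^{-1}\eta_{2k}$. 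So $F(N)\subset N$. The same argument for $F^{-1}$, which is $(\lambda^{-1},\ldots)\in\GT$ acting by the inverse automorphism and also sending $\eta_{2k},\omega_{2k-1}$ to their $\lambda^{-1}$ powers, gives $F^{-1}(N)\subset N$. Therefore $F$ induces an automorphism of $\B'_{2k}$.

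For the formula, \eqref{drinfeld} gives
$$F(\sigma_{2k-1})=f(\sigma_{2k-1}^2,\eta_{2k-1})\,\sigma_{2k-1}^\lambda\, f(\eta_{2k-1},\sigma_{2k-1}^2),$$
so it suffices to show that $f(\eta_{2k-1},\sigma_{2k-1}^2)$ is trivial in $\B'_{2k}$. Following the lemma on the standard bracketing, $\omega_{2k-2}$ commutes with both $\eta_{2k-1}$ and $\sigma_{2k-1}$, and $\omega_{2k-1}=\omega_{2k-2}\eta_{2k-1}$. Since $f$ lies in the closure of the commutator subgroup of $\widehat F_2$, we get
$$f(\eta_{2k-1},\sigma_{2k-1}^2)=f(\omega_{2k-1},\sigma_{2k-1}^2),$$
by the same commutation principle (the Haiku Lemma (ii)) used there. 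In $\B'_{2k}$ we have $\omega_{2k-1}=\eta_{2k}$. Now $\eta_{2k}=\sigma_{2k-1}\eta_{2k-1}\sigma_{2k-1}$, and I claim it commutes with $\sigma_{2k-1}$. Indeed, $\sigma_{2k-1}\eta_{2k}\sigma_{2k-1}^{-1}$ should equal $\eta_{2k}$ by the relation $\omega_{2k}=\omega_{2k-1}\eta_{2k}$ together with the centrality of $\omega_{2k}$: since $\sigma_{2k-1}$ commutes with $\omega_{2k}$, it commutes with $\omega_{2k-1}\eta_{2k}$. In the quotient this product equals $\eta_{2k}^2$, so $\sigma_{2k-1}$ commutes with $\eta_{2k}^2$.

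The main obstacle is passing from commutation with $\eta_{2k}^2$ to commutation with $\eta_{2k}$ itself; this is also where the choice of even index $2k$ should matter. I would try writing $\eta_{2k}=\omega_{2k}\omega_{2k-1}^{-1}$. Then in the quotient $\omega_{2k}=\omega_{2k-1}^2$, and since $\omega_{2k}$ is central, $\omega_{2k-1}^2$ commutes with $\sigma_{2k-1}$. To get the full commutation, the cleanest route is the mapping-class interpretation: $\B'_{2k}$ is a quotient of the hyperelliptic picture, where $\eta_{2k}$ and $\omega_{2k-1}$ both correspond to the boundary twist, and that twist is central. Alternatively one can use $\langle\sigma_{2k-1},\eta_{2k-1}\rangle$ and the relation $\sigma_{2k-1}\eta_{2k-1}\sigma_{2k-1}\eta_{2k-1}=\eta_{2k-1}\sigma_{2k-1}\eta_{2k-1}\sigma_{2k-1}$ to check directly that $\omega_{2k-1}$ commutes with $\sigma_{2k-1}$ modulo $N$. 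Once $\omega_{2k-1}$ commutes with $\sigma_{2k-1}^2$ in $\B'_{2k}$, we get $f(\omega_{2k-1},\sigma_{2k-1}^2)=1$, since $f$ is trivial on commuting pairs. The symmetric factor $f(\sigma_{2k-1}^2,\eta_{2k-1})=f(\eta_{2k-1},\sigma_{2k-1}^2)^{-1}$ is then trivial too, which gives $F(\sigma_{2k-1})=\sigma_{2k-1}^\lambda$.
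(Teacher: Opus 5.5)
Your first paragraph, showing that $F$ descends to $\B'_{2k}$, is correct and is the paper's argument. The second half has a genuine gap. You try to prove $f(\eta_{2k-1},\sigma_{2k-1}^2)=1$ in $\B'_{2k}$ by showing that $\omega_{2k-1}$ ($=\eta_{2k}$ there) commutes with $\sigma_{2k-1}^2$. That commutation is false. Since $\omega_{2k-1}=\omega_{2k-2}\eta_{2k-1}$ and $\omega_{2k-2}$ commutes with $\sigma_{2k-1}$, it is equivalent to $[\eta_{2k-1},\sigma_{2k-1}^2]=1$.

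Take $k=3$ and push this through $\iota'_3\colon\B'_6\to\GGone$ from Lemma~\ref{iotas}; every $k\ge 3$ works the same way via $\iota'_k$. Here $\eta_5\mapsto y_5$ and $\sigma_5\mapsto d$, and by definition $y_5dy_5^{-1}=d'$. So the commutator maps to $1$ only if ${d'}^2=d^2$. This fails because $\delta\neq\delta'$, and $\Gamma_{g,1}$ injects into $\GGone$. Your stronger claim that $\eta_{2k}=\sigma_{2k-1}\eta_{2k-1}\sigma_{2k-1}$ commutes with $\sigma_{2k-1}$ would even force $y_5d=dy_5$, i.e.\ $d=d'$.

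The hyperelliptic heuristic misidentifies $\omega_{2k-1}$. Its image $w_{2k-1}$ is not a central boundary twist. It is only a square root of the twist along $\omega_k$, and it exchanges $\delta_{k-1}$ and $\delta'_{k-1}$ (e.g.\ $w_5dw_5^{-1}=d'$). So neither of your suggested routes can close the argument. There is also no reason to expect $f(\eta_{2k-1},\sigma_{2k-1}^2)$ to be trivial in the quotient, and the paper does not claim it.

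The missing idea is to aim only for commutation: it suffices that $f(\eta_{2k-1},\sigma_{2k-1}^2)$ commutes with $\sigma_{2k-1}$. Relation (I) then gives $F(\sigma_{2k-1})=f(\eta_{2k-1},\sigma_{2k-1}^2)^{-1}\sigma_{2k-1}^\lambda f(\eta_{2k-1},\sigma_{2k-1}^2)=\sigma_{2k-1}^\lambda$. You already have the key input: $\eta_{2k}^2=\omega_{2k}$ commutes with $\sigma_{2k-1}$ in $\B'_{2k}$.

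Let $H=\langle\eta_{2k-1},\sigma_{2k-1}\rangle$.
\begin{itemize}
\item The element $(\sigma_{2k-1}\eta_{2k-1})^2=\eta_{2k}\eta_{2k-1}$ is central in $H$ already in $\B_{2k}$, because the $\eta_i$ commute pairwise.
\item Hence $\eta_{2k-1}^2=(\eta_{2k}\eta_{2k-1})^2\eta_{2k}^{-2}$ is central in $H$ in the quotient.
\item Modulo $Z=\overline{\langle\eta_{2k-1}^2,(\sigma_{2k-1}\eta_{2k-1})^2\rangle}$, $H$ is dihedral, generated by the involutions $\bar\eta_{2k-1}$ and $\overline{\sigma_{2k-1}\eta_{2k-1}}$. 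In it $\bar\sigma_{2k-1}$ is a rotation inverted by $\bar\eta_{2k-1}$.
\item The closed derived subgroup of $\overline{\langle\eta_{2k-1},\sigma_{2k-1}^2\rangle}$ contains $f(\eta_{2k-1},\sigma_{2k-1}^2)$. It lies in $\overline{\langle\sigma_{2k-1}^4\rangle}\,Z$, all of which commutes with $\sigma_{2k-1}$.
\end{itemize}
This is exactly the paper's argument, written there with $x=\eta_{2k-1}$ and $y=\eta_{2k-1}^{-1}\sigma_{2k-1}^{-1}$, whose squares are shown to be central.
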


\begin{proof}

 We will make use of the standard identities $\omega_i=\eta_2\cdots \eta_i$,
valid in both $\B_{2k}$ and $\B'_{2k}$. The standard automorphism $F$ of 
$\B_{2k}$ given in \eqref{drinfeld} satisfies $F(\eta_i)=\eta_i^\lambda$
and $F(\omega_i)=\omega_i^\lambda$ for $1\le i\le 2k$. Since all the
$\eta_i$ and the $\omega_i$ commute pairwise in $\B_{2k}$, the subgroup
$\langle \omega_{2k-1}^{-1}\eta_{2k}\rangle\subset \B_{2k}$ is preserved by
$F$, and therefore the action of $F$ on $\B_{2k}$ passes to the
quotient $\B'_{2k}$. It remains to show that $F(\sigma_{2k-1})=
\sigma_{2k-1}^\lambda$ in $\B'_{2k}$.

Consider the subgroup of $\B'_{2k}$ generated by $\eta_{2k-1}$ and $\sigma_{2k-1}$.
We can also take the two generators $x=\eta_{2k-1}$ and $y=\eta_{2k-1}^{-1}\sigma_{2k-1}^{-1}$ 
as generators of the same subgroup. Let us show that both 
$x^2$ and $y^2$ lie in the center of the subgroup $\langle x,y\rangle$
of $\B'_{2k}$. We first show that $x^2=\eta_{2k-1}^2$ commutes with
$\sigma_{2k-1}$. Indeed, $w_{2k}$ is in the center of $\B_{2k}$ and
therefore its image is also in the center of $\B'_{2k}$, so it commutes
with $\sigma_{2k-1}$ in $\B'_{2k}$, and in $\B'_{2k}$ we have
$$w_{2k}=w_{2k-1}\eta_{2k}=w_{2k-1}^2=
(\eta_2\cdots \eta_{2k-1})^2=\eta_2^2\eta_3^2\cdots \eta_{2k-1}^2,$$
where the last equality holds because the $\eta_i$ commute pairwise. Now,
the elements $\eta_2,\eta_3,\ldots,\eta_{2k-2}$ all commute with $\sigma_{2k-1}$,
$\eta_{2k-1}^2$ must also commute with $\sigma_{2k-1}$. Thus $x^2=\eta_{2k-1}^2$
commutes with both $x$ and $y$, so it lies in the center of 
$\langle x,y\rangle$. To see that $y^2$ is also in the center of 
$\langle x,y\rangle$, we write
$$y^2=\eta_{2k-1}^{-1}\sigma_{2k-1}^{-1}\eta_{2k-1}^{-1}\sigma_{2k-1}^{-1}=
\eta_{2k-1}^{-1}\eta_{2k}^{-1}.$$
Since all the $\eta_i$ commute pairwise, this shows that $y^2$ commutes with 
$x=\eta_{2k-1}$, and since $y^2$ commutes with $y$, we find that $y^2$
is also in the center of $\langle x,y\rangle$. 

This shows that every element $g\in \langle x,y\rangle$ can be written
$$g=y^{e_1}(xy)^\alpha x^{e_2}\cdot c$$
for some $c\in \langle x^2,y^2\rangle$, where $e_1,e_2\in \{0,1\}$.
If $g$ lies in the derived subgroup of $\langle x,y\rangle$, then the
total powers of $x$ and $y$ in $g$ must be equal to $0$, so we must have
\begin{equation}\label{alpha}
\begin{cases}
g=y(xy)^\alpha x\cdot c=(yx)^{\alpha+1}\cdot c&\hbox{if }\alpha\equiv 1\ mod\ 2\\
g=(xy)^\alpha\cdot c&\hbox{if }\alpha\equiv 0\ mod\ 2.
\end{cases}
\end{equation}
In other words, $g$ is either of the form $g=(xy)^\beta\cdot c$ or 
$g=(xy)^\beta\cdot c$ with $\beta\equiv 0$ mod 2.

Now, under the standard action of $F$ on $\B_{2k}$ given in \eqref{drinfeld},
we have
$$F(\sigma_{2k-1})=f(\sigma_{2k-1}^2,\eta_{2k-1})\sigma_{2k-1}^\lambda
f(\eta_{2k-1},\sigma_{2k-1}^2).$$
Thus, to show that $F(\sigma_{2k-1})=\sigma_{2k-1}^\lambda$ in the 
quotient group $\B'_{2k}$, it suffices to show that
$f(\eta_{2k-1},\sigma_{2k-1}^2)$ commutes with $\sigma_{2k-1}$. The element
$f(\eta_{2k-1},\sigma_{2k-1}^2)$ belongs to the derived subgroup of
$\langle x,y\rangle$, so by \eqref{alpha}, we have one of
$$f(\eta_{2k-1},\sigma_{2k-1}^2)=\begin{cases}
(\eta_{2k-1}^{-1}\sigma_{2k-1}^{-1}\eta_{2k-1})^\beta\cdot c\\
\sigma_{2k-1}^{-\beta}\cdot c,
\end{cases}
$$
with $\beta\equiv 0$ mod 2. In the second case, it is clear that
$f(\eta_{2k-1},\sigma_{2k-1})^2)$ commutes with $\sigma_{2k-1}$. In the
first case, we need to show that $\eta_{2k-1}^{-1}\sigma_{2k-1}^{-1}\eta_{2k-1}$
commutes with $\sigma_{2k-1}$. We saw above that $\eta_{2k-1}^2$ commutes with 
$\sigma_{2k-1}$ in $\B'_{2k}$. Using also the identity
$\eta_{2k}=\sigma_{2k-1}\eta_{2k-1}\sigma_{2k-1}$ and the pairwise commutation
of the $\eta_i$, we find that
\begin{align}
(\eta_{2k-1}^{-1}&\sigma_{2k-1}\eta_{2k-1})\sigma_{2k-1}(\eta_{2k-1}^{-1}\sigma_{2k-1}^{-1}\eta_{2k-1}
)=\eta_{2k-1}^{-1}\eta_{2k}\eta_{2k-1}^{-1}\sigma_{2k-1}^{-1}\eta_{2k-1}\notag\\
&=\eta_{2k}\eta_{2k-1}^{-2}\sigma_{2k-1}^{-1}\eta_{2k-1}=\eta_{2k}\sigma_{2k-1}^{-1}
\eta_{2k-1}^{-1}\notag\\
&=\eta_{2k}\sigma_{2k-1}^{-1}\eta_{2k-1}^{-1}\sigma_{2k-1}^{-1}\sigma_{2k-1}
=\eta_{2k}\eta_{2k}^{-1}\sigma_{2k-1}\notag\\
&=\sigma_{2k-1}.\notag
\end{align}
This completes the proof that $f(\eta_{2k-1},\sigma_{2k-1}^2)$ commutes
with $\sigma_{2k-1}$ in the quotient $\B'_{2k}$, so that
$F(\sigma_{2k-1})=\sigma_{2k-1}^\lambda$ in this quotient.
\end{proof}

\vspace{.2cm}
\noindent
{\bf Remark 4.1.1.}
{\rm Let $\B'_{2k}$ denote the quotient
$\B_{2k}/\langle w_{2k-1}^{-1}\eta_{2k}\rangle$. Observe that this quotient of 
$\B_{2k}$ is not the
same thing as the more familiar quotient by the subgroup
$\langle w_{2k-1}\eta_{2k}\rangle$. Indeed, we have the identity
$w_{2k}=w_{2k-1}\eta_{2k}=(\sigma_1\cdots \sigma_{2k-1})^{2k}$,
which generates the center of $\B_{2k}$, so the quotient by
$\langle w_{2k-1}\eta_{2k}\rangle$ is simply the quotient of
$\B_{2k}$ by its center, whereas the quotient by $\langle w_{2k-1}^{-1}
\eta_{2k}\rangle$ reflects the particular position of the Dehn twist $d_k$
on the topological surface.}

\vspace{.2cm}
The first two families of relations in (A) of Theorem \ref{wajnryb} show that
for all $2\le k\le 2g$ there is a homomorphism
\begin{align}\label{iota}
\iota:\B_k&\rightarrow \GGone\\
\sigma_i\ &\mapsto \ a_i.\notag
\end{align}

For $2\le k\le g$, let $\delta_k$ denote
the upper vertical loop from the $k$-th genus hole as in Fig. 2; 
in particular the twists $d_2=d$ and $d_3$ 
are the twists appearing in the Wajnryb presentation. In the next lemma,
we display a family of homomorphisms $\iota_k$ which differ from the
standard $\iota$ in that the final braid generator is mapped to the twist $d_k$.

\begin{lem}\label{iotas} Fix $g\ge 2$.  For each 
$k$ with $3\le k\le g+1$, there is a homomorphism of profinite groups 
\begin{equation}\label{iotak}
\iota_k:\B_{2k}\rightarrow \widehat\Gamma_{g,1}
\end{equation}
mapping $\sigma_i\mapsto a_i$ 
for $i=1,\ldots,2k-2$ and $\sigma_{2k-1}\mapsto d_{k-1}$.
The element $\omega_{2k-1}^{-1}\eta_{2k}$ lies in the kernel of $\iota_k$; 
in other words, the homomorphism $\iota_k$ factors through a homomorphism
$\iota'_k:\B'_{2k}\rightarrow \widehat\Gamma_{g,1}$. 
\end{lem}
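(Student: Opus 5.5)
To construct $\iota_k$, we check that the proposed images satisfy the Artin relations of $\B_{2k}$. Since $\B_{2k}$ is generated by $\sigma_1,\ldots,\sigma_{2k-1}$ subject to braid and commutation relations, it suffices to verify those relations in $\GGone$ for $a_1,\ldots,a_{2k-2},d_{k-1}$. The relations among $a_1,\ldots,a_{2k-2}$ are the first two families in (A).

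For the relations involving $d_{k-1}$ we argue geometrically, since $d_{k-1}$ is a Dehn twist along the loop $\delta_{k-1}$ of Figure 2. The loop $\delta_{k-1}$ meets $\alpha_{2k-2}$ in exactly one point, so $d_{k-1}a_{2k-2}d_{k-1}=a_{2k-2}d_{k-1}a_{2k-2}$. It is disjoint from $\alpha_1,\ldots,\alpha_{2k-3}$, so $d_{k-1}$ commutes with $a_j$ for $j\le 2k-3$. For $k=3$ these are exactly the relations $da_4d=a_4da_4$ and the commutation of $d$ with $a_j$, $j\ne 4$, from (A).

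In the profinite setting a homomorphism of the discrete braid group into $\Gone$ extends uniquely by continuity to $\B_{2k}\to\GGone$. So it is enough to work with discrete groups, and there the geometric statements about intersection numbers suffice.

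For the kernel statement, we identify the images of $\omega_{2k-1}$ and $\eta_{2k}$ as Dehn twists.
\begin{itemize}
\item By the discussion following Theorem \ref{wajnryb}, $\iota_k(\omega_{2k-1})=(a_1\cdots a_{2k-2})^{2k-1}$ is the element $w_{2k-1}$. Since $2k-1$ is odd, $w_{2k-1}^2$ is the twist along $\omega_{k}$. More precisely, $w_{2k-1}$ itself is a hyperelliptic-type involution times the boundary twist, so I would rather use the other standard identity: on the subsurface $S$ of genus $k-1$ with one boundary cut out by $\alpha_1,\ldots,\alpha_{2k-2}$, the chain relation gives $(a_1\cdots a_{2k-2})^{2(2k-1)}$ equal to the boundary twist.
\item The cleaner route is to compare $\eta_{2k}=\sigma_{2k-1}\eta_{2k-1}\sigma_{2k-1}$, whose image is $d_{k-1}\,y\,d_{k-1}$ with $y=a_{2k-2}\cdots a_1^2\cdots a_{2k-2}$. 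Here $y$ is the twist along the loop $(a_{2k-2}\cdots a_2)(\alpha_1)$ (using $a\,b\,a^{-1}$ conventions and $a_1^2$ absorbed appropriately), and conjugating by $d_{k-1}$ gives a twist along a loop $\epsilon$.
\end{itemize}

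The claim $\iota_k(\eta_{2k})=\iota_k(\omega_{2k-1})$ then becomes a geometric identity of mapping classes. Equivalently, it is the chain relation for the odd-length chain $\alpha_1,\ldots,\alpha_{2k-2},\delta_{k-1}$. Its regular neighbourhood is a surface of genus $k-1$ with two boundary components, one of which is isotopic to the boundary $\delta_{k-1}$-side curve.

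Concretely, I would use the Birman–Hilden / hyperelliptic model already invoked in Remark 3.2.1. The chain $\alpha_1,\ldots,\alpha_{2k-2},\delta_{k-1}$ lifts from $2k$ strands on a disk in which the last strand's puncture is identified with a branch point on the other side of the hole. In the braid group, $\omega_{2k}=\omega_{2k-1}\eta_{2k}$ maps to the product of the twists along the two boundary components of the neighbourhood, $d'_{k}$-type and $d_k$-type. For $k=3$ this is how relation (B) $d_2d_2'=(a_1a_2a_3)^4$ arises.

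The relation $\omega_{2k-1}=\eta_{2k}$ should follow once we see that these two elements are the two boundary-adjacent "half" twists that coincide because $\delta_{k-1}$ and $\alpha_{2k-1}$ cobound with the chain. Here I would explicitly track the loop $\epsilon$ and show that $\omega_{2k-1}$ and $\eta_{2k}$ act identically, both being (up to the same boundary factor) the twist along the separating curve of Figure 2.

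This geometric identification is the main obstacle. I would attempt it first by induction on $k$. The base case $k=3$ is read off from (A), (B) and the explicit picture. The inductive step uses the lantern relation of Definition \ref{loops}, expressing $d_{k}$ via $d_{k-1}$ and the $t_i$, to pass from $k$ to $k+1$.
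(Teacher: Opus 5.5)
Your construction of $\iota_k$ itself is fine: the braid relation with $a_{2k-2}$, commutation with $a_1,\dots,a_{2k-3}$, and extension to the profinite completion all hold, and the paper takes this part for granted. The gap is the kernel statement, which the proposal never proves. You end by calling $\iota_k(\omega_{2k-1})=\iota_k(\eta_{2k})$ ``the main obstacle'' and sketch an induction through the lantern relation that is not carried out. Several intermediate identifications are also wrong:
\begin{itemize}
\item $y_{2k-1}=a_{2k-2}\cdots a_1^2\cdots a_{2k-2}=w_{2k-1}w_{2k-2}^{-1}$ is not a Dehn twist. A twist acts unipotently on $H_1$, while already $y_3=(a_1a_2)^3a_1^{-2}$ has eigenvalue $-1$.
\item The chain relation for the $(2k-1)$-chain $\alpha_1,\dots,\alpha_{2k-2},\delta_{k-1}$ computes the \emph{product} $\iota_k(\omega_{2k})=\iota_k(\omega_{2k-1}\eta_{2k})$, so it is not equivalent to the equality you want. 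If the two boundary curves of its neighbourhood were nontrivial curves ``of $d_k$- and $d'_k$-type'', as you assert, nothing would follow.
\item Relation (B) is the chain relation for the $3$-chain $\alpha_1,\alpha_2,\alpha_3$, not for $\alpha_1,\dots,\alpha_4,\delta_2$.
\end{itemize}
Your chain idea can in fact be rescued. The curve $\delta_{k-1}$ lies in the genus $k-1$ subsurface filled by $\alpha_1,\dots,\alpha_{2k-2}$, whose single boundary curve is $\omega_k$. An Euler characteristic count then shows that one boundary curve of the chain's neighbourhood bounds a disc and the other is parallel to $\omega_k$. The chain relation therefore reads $\iota_k(\omega_{2k-1}\eta_{2k})=w_{2k-1}^2$, and cancelling $w_{2k-1}=\iota_k(\omega_{2k-1})$ gives the claim. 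But this has to be argued, and you asserted something different.

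The missing idea, which is the paper's proof, is that you only need the analogue of (B) for the odd chain $\alpha_1,\dots,\alpha_{2k-3}$, plus two braid identities. In $\B_{2k}$ one has $\omega_{2k-1}=\omega_{2k-2}\eta_{2k-1}$ and $\eta_{2k}=\sigma_{2k-1}\eta_{2k-1}\sigma_{2k-1}$. Hence $\iota_k(\omega_{2k-1})=\iota_k(\eta_{2k})$ is equivalent to
\[
w_{2k-2}=d_{k-1}\bigl(y_{2k-1}d_{k-1}y_{2k-1}^{-1}\bigr)=d_{k-1}d'_{k-1}.
\]
This is the chain relation for $\alpha_1,\dots,\alpha_{2k-3}$, whose neighbourhood is bounded by $\delta_{k-1}$ and $\delta'_{k-1}=y_{2k-1}(\delta_{k-1})$. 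For $k=3$ it is literally relation (B). No induction on $k$ and no lantern relation are required.
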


\begin{proof}

The equality of $\iota_k(\omega_{2k-1})$ and
$\iota(\eta_{2k})$ in $\widehat\Gamma_{g,1}$ is a consequence of the following
standard relation in $\widehat\Gamma_{g,1}$ (already valid in
the discrete group $\Gamma_{g,1}$):
$$d_{k-1}d'_{k-1}=w_{2k-2} \ \ \ {\rm with}\ \ \ 
d'_{k-1}=y_{2k-1}d_{k-1}y_{2k-1}^{-1}$$
for $2\le k\le g+1$. Thus the elements
$d_{k-1}^{-1}w_{2k-2}$ and $y_{2k-1}d_{k-1}y_{2k-1}^{-1}$ are equal in $\GGone$.
We have $d=\iota_k(\sigma_{2k-1})$, and because $\sigma_{2k-1}$ does not
appear in $\omega_{2k-1}=(\sigma_1\cdots \sigma_{2k-2})^{2k-1}$ or in
$\eta_{2k-1}=\sigma_{2k-2}\cdots \sigma_1\cdot \sigma_1\cdots \sigma_{2k-2}$,
the element $w_{2k-1}\in \GGone$ is equal to the image under $\iota_k$ of
$\omega_{2k-1}\in \B_{2k}$, and similarly $y_{2k-1}$ is the image in $\GGone$
of $\eta_{2k-1}\in \B_{2k}$. Thus the two elements $d_{k-1}^{-1}w_{2k-2}$ and
$y_{2k-1}d_{k-1}y_{2k-1}^{-1}$, equal in $\GGone$,  are respectively the images 
of the elements $\sigma_{2k-1}^{-1}\omega_{2k-2}$ and $\eta_{2k-1}\sigma_{2k-1} 
\eta_{2k-1}^{-1}$ in $\B_{2k}$ under $\iota_k$. Therefore the elements
$\omega_{2k-2}\eta_{2k-1}$ and $\sigma_{2k-1}\eta_{2k-1}\sigma_{2k-1}$ also have
equal image in $\GGone$ under $\iota_k$. Noting that 
$\sigma_{2k-1}\eta_{2k-1}\sigma_{2k-1}=\eta_{2k}$ and
$\omega_{2k-2}\eta_{2k-1}=\omega_{2k-1}$ in $\B_{2k}$, we thus find that
$\omega_{2k-1}$ and $\eta_{2k}$ become equal under $\iota_k$, proving the 
lemma.
\end{proof}

\subsection{The action of $F$ on $\widehat\Gamma_{2,1}$}

The results in the preceding subsection will now allow us to prove that the
action of $F$ proposed in \eqref{Faction} respects all the relations in (A), as well as relation (B).
We begin by giving an easy proof of the first two families of relations in (A), based on
the standard homomorphism \eqref{iota}.

\begin{prop}\label{braidrels} 
The action of $F=(\lambda,f)\in \GT$ on
the Dehn twists given in \eqref{Faction} respects the braid relations
$a_ia_{i+1}a_i=a_{i+1}a_ia_{i+1}$ for $i=1,\ldots,2g-1$ and
$a_ia_j=a_ja_i$ for $|i-j|\ge 2$. Furthermore, the action of $F$ on the
elements $y_k=a_{k-1}\cdots a_1\cdot a_1\cdots a_{k-1}$ and $\omega_k=(\sigma_1\cdots\sigma_{k-1})^k$
is given by $F(y_k)=y_k^\lambda$ and $F(w_k)=w_k^\lambda$ for $k=2,\ldots,2g+1$.
\end{prop}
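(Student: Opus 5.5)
The plan is to transport Drinfeld's result (Theorem \ref{drinfeldaction}) through the standard homomorphism $\iota:\B_{2g+1}\to\GGone$ of \eqref{iota}, $\sigma_i\mapsto a_i$. The relations in question (the braid relations and the far commutativity among the $a_i$) involve only the $a_i$. The formulas \eqref{Faction} for $F(a_i)$, $1\le i\le 2g$, are exactly the images under $\iota$ of Drinfeld's formulas \eqref{drinfeld} for $F(\sigma_i)$, because $\iota(\eta_i)=y_i$; here $\iota(\eta_i)=a_{i-1}\cdots a_1^2\cdots a_{i-1}$, which is the element written $y_i$ in Theorem \ref{thm11}.

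Concretely, I will consider the map $\phi$ from the free profinite group on symbols $a_1,\dots,a_{2g}$ (and $d$) to $\GGone$ defined by \eqref{Faction}. On the $a_i$ it agrees with $\iota\circ F$ on the $\sigma_i$, where $F$ now denotes the Drinfeld automorphism of $\B_{2g+1}$. Let $R$ be any braid or commutation word among the $a_i$. Then $R(\sigma_1,\dots,\sigma_{2g})=1$ in $\B_{2g+1}$, so $F(R)=1$ in $\B_{2g+1}$, and hence
\[
R(\phi(a_1),\dots,\phi(a_{2g}))=\iota\bigl(F(R(\sigma_1,\dots,\sigma_{2g}))\bigr)=\iota(1)=1 .
\]
So $\phi$ respects all the relations in the first two families of (A).

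For the second assertion, Theorem \ref{drinfeldaction} gives $F(\eta_k)=\eta_k^\lambda$ and $F(\omega_k)=\omega_k^\lambda$ in $\B_{2g+1}$ for $2\le k\le 2g+1$. Applying $\iota$, and using $\iota(\eta_k)=y_k$ and $\iota(\omega_k)=w_k$ (both are words in $a_1,\dots,a_{k-1}$ only), gives $F(y_k)=y_k^\lambda$ and $F(w_k)=w_k^\lambda$. These identities hold in the sense that the images of the words under $\phi$ equal the stated powers.

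The only point needing care is that $\phi$ is not yet known to be an automorphism of $\GGone$. The argument must therefore be phrased at the level of words: evaluate each word via $\phi$, compare it with $\iota\circ F$, and use the fact that $\iota$ is a continuous homomorphism, so that profinite powers are carried to profinite powers. I also need to check the index conventions. There is the mismatch in Wajnryb's $y_k$ of \eqref{wajnrybdefs}, which ends with $a_k$, versus the $y_i$ of Theorem \ref{thm11}; the statement here uses the latter, matching $\eta_k$. The case $k=2g+1$ requires $\B_{2g+1}$, which is why I take $\iota$ on $2g+1$ strands. Beyond this bookkeeping, I do not expect a substantive obstacle.
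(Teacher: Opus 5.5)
Your proposal is correct and follows the paper's own proof: both transport Drinfeld's action through $\iota$, noting that \eqref{Faction} is the image of \eqref{drinfeld} under $\iota$, so that braid and commutation relations among the $\sigma_i$ are carried to the $F(a_i)$. Both then read off $F(y_k)=y_k^\lambda$ and $F(w_k)=w_k^\lambda$ from $y_k=\iota(\eta_k)$ and $w_k=\iota(\omega_k)$. Your word-level formulation via $\phi$ and your explicit use of $\B_{2g+1}$ (needed for $k=2g+1$, and slightly beyond the range $2\le k\le 2g$ the paper states for $\iota$) only make precise what the paper leaves implicit.
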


\begin{proof}
By comparing \eqref{drinfeld} with \eqref{Faction}, we
see that the homomorphism $\iota$ in \eqref{iota} satisfies
$\iota\circ F=F\circ \iota$ on the elements $\sigma_1,\ldots,\sigma_{2g}$.
Since the action of $F$ respects the braid relations between the $\sigma_i$,
$F\circ\iota$ respects the same relations in $\GGone$. This settles the first
two families of relations of type (A) in Theorem \ref{wajnryb}. As for the action on
$y_k$ and $w_k$, since $y_k=\iota(\eta_k)$ and $w_k=\iota(\omega_k)$, it follows 
directly from the fact that the action of $F$ on $\B_k$
is well-known to satisfy $F(\eta_k)=\eta_k^\lambda$ and $F(\omega_k)=\omega_k^\lambda$.
\end{proof}

\vspace{.3cm}
We now turn to the remaining relations in (A), and relation (B). Let $d=d_2$. 
The following results are direct corollaries of the preceding subsection.

\begin{cor}\label{Fiotacommute} Let $k=3$, let $g\ge 2$, and let 
$\iota_3:\B_6\rightarrow \widehat\Gamma_{2,1}$ be the homomorphism of \eqref{iotak}.
Then the homomorphisms $F\circ\iota_3$ and $\iota_3\circ F$ agree on the
generators $\sigma_1,\ldots,\sigma_5$ of $\B_6$, and are therefore
equal. In particular the 
action of $F$ on generators proposed in \eqref{Faction} respects the
relation $a_4da_4=da_4d$.
\end{cor}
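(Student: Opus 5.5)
We check the equality on generators. Here $F$ on the braid side means the Drinfeld automorphism \eqref{drinfeld} of $\B_6$, which by Lemma \ref{technical} descends to $\B'_6=\B_6/\langle\omega_5^{-1}\eta_6\rangle$. On the mapping class group side, $F$ is the map on generators proposed in \eqref{Faction}. By Lemma \ref{iotas}, $\iota_3$ factors as $\iota'_3:\B'_6\to\GGone$, sending $\sigma_i\mapsto a_i$ for $1\le i\le 4$ and $\sigma_5\mapsto d=d_2$.

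For $\sigma_1$ we have $\iota_3(F(\sigma_1))=a_1^\lambda=F(a_1)$. For $2\le i\le 4$,
$$\iota_3(F(\sigma_i))=\iota_3\bigl(f(\sigma_i^2,\eta_i)\sigma_i^\lambda f(\eta_i,\sigma_i^2)\bigr)=f(a_i^2,y_i)a_i^\lambda f(y_i,a_i^2)=F(a_i),$$
because $\eta_i$ involves only $\sigma_1,\ldots,\sigma_{i-1}$, so $\iota_3(\eta_i)=y_i$. For $\sigma_5$ we work in the quotient: Lemma \ref{technical} with $k=3$ gives $F(\sigma_5)=\sigma_5^\lambda$ in $\B'_6$. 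Hence
$$\iota_3(F(\sigma_5))=\iota'_3(\sigma_5^\lambda)=d^\lambda=F(d).$$
This is the only genuinely nontrivial step. It is where the correction factors $f(\sigma_5^2,\eta_5)$ are killed, and it depends on $\omega_5^{-1}\eta_6$ lying in the kernel of $\iota_3$.

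Thus the continuous maps $\iota'_3\circ F$ and $F\circ\iota'_3$ agree on $\sigma_1,\ldots,\sigma_5$, where the second is defined on generator images. We must be careful that $F$ is not yet known to be a homomorphism on $\GGone$. The correct formulation is therefore that the assignment $\sigma_i\mapsto F(\iota_3(\sigma_i))$ extends to a continuous homomorphism $\B_6\to\GGone$, namely $\iota_3\circ F$. In particular the images $F(a_4)$ and $F(d)$ satisfy every relation that $\sigma_4$ and $\sigma_5$ satisfy in $\B_6$. Applying this to the relation $\sigma_4\sigma_5\sigma_4=\sigma_5\sigma_4\sigma_5$ gives
$$F(a_4)F(d)F(a_4)=F(d)F(a_4)F(d).$$

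The same argument handles the commutation relations between $d$ and $a_j$ for $j\le 3$, since $\sigma_5$ commutes with $\sigma_1,\sigma_2,\sigma_3$. The commutations with $a_j$ for $j\ge 5$ are not covered by $\iota_3$ and would need to be treated separately.
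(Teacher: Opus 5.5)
Your proof is correct and follows the paper's own argument. You check $\sigma_1,\ldots,\sigma_4$ directly from \eqref{drinfeld} and \eqref{Faction}, handle $\sigma_5$ via Lemma \ref{technical} and the factorization of $\iota_3$ through $\iota'_3$ (Lemma \ref{iotas}), and then push the braid relation through the homomorphism $\iota_3\circ F$. Your rephrasing, that the assignment $\sigma_i\mapsto F(\iota_3(\sigma_i))$ extends to the homomorphism $\iota_3\circ F$, is simply a more careful version of the paper's ``$F\circ\iota_3=\iota_3\circ F$'', since $F$ is not yet known to be a homomorphism of $\GGone$.
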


\begin{proof}

 Since $\iota_3(\sigma_i)=a_i$ for $i=1,2,3,4$ and
$F(a_i)$ as given in \eqref{Faction} is the image under
$\iota_3$ of $F(\sigma_i)$ as given in \eqref{drinfeld}, we see that
$\iota_3\circ F$ and $F\circ \iota_3$ coincide on $\sigma_i$, $i=1,2,3,4$.  
For $\sigma_5$, we have $\iota_3(\sigma_5)=d_2=d$. By Lemma \ref{technical},
the action of $F$ passes to the quotient $\B'_6$, and in this quotient
we have $F(\sigma_5)=\sigma_5^\lambda$. Since $\iota_3$ factors
through $\iota'_3$ by Lemma \ref{iotas}, we find that
$\iota_3\bigl(F(\sigma_5)\bigr)=\iota'_3\bigl(F(\sigma_5)\bigr)
=\iota'_k(\sigma_5^\lambda)=d^\lambda$. 

To prove that the relation $a_4da_4=da_4d$ is respected by the action of $F$
on generators, we note that since $\iota_3$ is a group homomorphism and 
$F(\sigma_4)F(\sigma_5)F(\sigma_4)=F(\sigma_5)F(\sigma_4)F(\sigma_5)$ in $\B_6$,
and since we have $\iota_3\circ F=F\circ \iota_3$ on the $\sigma_i$, the 
image of this relation under $\iota_3$ gives $F(a_4)F(d)F(a_4)=F(d)F(a_4)F(d)$.
\end{proof}

\begin{cor}\label{relationB}
(i) The proposed action of $F$ respects relation (B) of
Theorem \ref{wajnryb}, namely $dd'=w_4$ where $d'=y_5dy_5^{-1}$.

\noindent (ii) The proposed action of $F$ respects the commutation of $d$ with $a_j$, $j\ne 4$.
\end{cor}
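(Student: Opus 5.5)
The plan is to deduce both parts from the commutation $F\circ\iota_3=\iota_3\circ F$ of Corollary \ref{Fiotacommute}, combined with the fact that $F$ raises $\eta_i$ and $\omega_i$ to the power $\lambda$ in $\B_6$ (Theorem \ref{drinfeldaction}). In other words, we realise each relation as the image under $\iota_3$ (or $\iota$) of a relation among elements of $\B_6$ that $F$ treats nicely.

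For (i), we first rewrite relation (B). Write $d=\iota_3(\sigma_5)$, $y_5=\iota_3(\eta_5)$ and $w_4=\iota_3(\omega_4)$. Then (B) reads
$$\iota_3(\sigma_5\,\eta_5\sigma_5\eta_5^{-1})=\iota_3(\omega_4).$$
Since $\eta_5\sigma_5\eta_5^{-1}$ is a conjugate of $\sigma_5$, and $\sigma_5\eta_5\sigma_5=\eta_6$, we get
$$\sigma_5\eta_5\sigma_5\eta_5^{-1}=\eta_6\eta_5^{-1}.$$
So (B) is precisely the image under $\iota_3$ of the relation $\eta_6=\omega_4\eta_5=\omega_5$ in $\B'_6$, which is the kernel element identified in Lemma \ref{iotas}.

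Now apply $F$. Because $F\circ\iota_3=\iota_3\circ F$,
$$F(d)F(d')=\iota_3\bigl(F(\eta_6)F(\eta_5)^{-1}\bigr)=\iota_3(\eta_6^\lambda\eta_5^{-\lambda}).$$
The $\eta_i$ commute pairwise, so this equals $\iota_3\bigl((\eta_6\eta_5^{-1})^\lambda\bigr)=(dd')^\lambda$. On the other side, $F(w_4)=w_4^\lambda$ by Proposition \ref{braidrels}. Since $dd'=w_4$, the two sides agree. One subtlety is that $F(d')$ must be computed as $F(y_5)F(d)F(y_5)^{-1}$, which equals $\iota_3(F(\eta_5\sigma_5\eta_5^{-1}))$. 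This is legitimate because $F$ is applied to words, and every letter lies in the image of $\iota_3$.

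For (ii), we split the indices $j\ne4$ into two cases.

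\textbf{Case $j\le 3$.} Here $d=\iota_3(\sigma_5)$ and $a_j=\iota_3(\sigma_j)$ with $|j-5|\ge2$. The commutation $\sigma_j\sigma_5=\sigma_5\sigma_j$ holds in $\B_6$ and is preserved by $F$, so its image under $\iota_3$ gives $F(a_j)F(d)=F(d)F(a_j)$.

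\textbf{Case $j\ge5$ (only when $g\ge3$).} This is the main obstacle, since $a_j$ is not in the image of $\iota_3$. We have $F(d)=d^\lambda$, so it suffices to show that $d$ commutes with $F(a_j)=f(a_j^2,y_j)\,a_j^\lambda\, f(y_j,a_j^2)$. Since $d$ commutes with $a_j$, it is enough to show that $d$ commutes with $y_j$, or more precisely with every element of $\langle a_j^2,y_j\rangle$.

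The element $y_j=a_{j-1}\cdots a_1^2\cdots a_{j-1}$ is the twist around a loop encircling $\alpha_1,\dots,\alpha_{j-1}$, and $d$ does not commute with $a_4$. We therefore use the $\iota_4$ or $\iota_k$ picture. For $j\ge6$, consider the relation in $\B_{j}$ (or $\B_{2k}$) coming from $\sigma_5$ commuting with $\eta_j$ via centrality: $\eta_j=\omega_j\omega_{j-1}^{-1}$. Here $\omega_j$, $\omega_{j-1}$ are central in the braid groups generated by $a_1,\dots,a_{j-1}$ and $a_1,\dots,a_{j-2}$. Topologically, $y_j$ and $w_j$ are boundary twists of subsurfaces containing $\delta$ when $j\ge 6$.

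I would prove $dy_j=y_jd$ directly from the relations in $\GGone$, by realising both elements inside the image of a suitable $\iota_k$ with $k=\lceil j/2\rceil+1$. There, $y_j$ corresponds to $\eta_j$ and $d$ corresponds to a conjugate of $\sigma_5$ by braid generators. Alternatively one can argue geometrically: the loop defining $y_j$ is disjoint from $\delta$ for $j\ge5$. The case $j=5$ is checked separately using $y_5dy_5^{-1}=d'$, which commutes with $d$ since $dd'=w_4$ and $w_4$ commutes with $d$.
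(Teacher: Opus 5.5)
Your part (i) is correct and is essentially the paper's argument. The paper checks $\iota_3(F(\eta_6))=\iota_3(F(\omega_5))$, while you check $F(d)F(d')=\iota_3\bigl((\eta_6\eta_5^{-1})^\lambda\bigr)=(dd')^\lambda=w_4^\lambda=F(w_4)$, which is the same computation. Your case $j\le 3$ of (ii) is also fine.

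The case $j\ge 5$, however, rests on a false reduction: $d$ does \emph{not} commute with $y_j$ for $j\ge 5$. For $j=5$ this is just the definition of $d'$. We have $y_5dy_5^{-1}=d'$, the twist along $\delta'$, and $\delta'$ is not isotopic to $\delta$, so $d'\neq d$. More generally, Lemma \ref{useful0}(ii) gives $y_jdy_j^{-1}=d'$ for every $j\ge 5$; only $y_j^2$ commutes with $d$. So no route (via some $\iota_k$, or via disjointness of loops) can establish $dy_j=y_jd$. Your separate check for $j=5$ is a non sequitur: it proves $dd'=d'd$, which says nothing about $dy_5=y_5d$, and indeed $y_5dy_5^{-1}=d'\neq d$ refutes it. (The paper's own one-line proof of (ii) makes the same slip, asserting that $d$ commutes with $y_k$ for all $k\ne 4$; this is false for $k\ge 5$.)

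What is missing is that you only need $d$ to commute with $f(a_j^2,y_j)$, not with $y_j$. Put $c=d^{-1}d'$.
\begin{itemize}
\item Since $d$ and $d'$ commute with $a_j$ for $j\ne 4$, $c$ commutes with $a_j^2$.
\item Since $y_jdy_j^{-1}=d'$ and $y_j^2$ commutes with $d$, also $y_jd'y_j^{-1}=d$. So conjugation by $y_j$ inverts $c$, i.e.\ $cy_j=y_jc^{-1}$.
\item Moreover $d^{-1}y_jd=d^{-1}d'y_j=cy_j$.
\end{itemize}
Hence
\[
d^{-1}f(a_j^2,y_j)\,d=f(a_j^2,d^{-1}y_jd)=f(a_j^2,c\,y_j)=f(a_j^2,y_j).
\]
The last equality is part (iii) of the Haiku Lemma with $a=a_j^2$, $b=y_j$, $g=c$. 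It applies because $f$ lies in the derived subgroup of $\widehat{F}_2$, so its exponent in the second variable is $0$. Thus $F(d)=d^\lambda$ commutes with $f(a_j^2,y_j)$, with $f(y_j,a_j^2)=f(a_j^2,y_j)^{-1}$, and with $a_j^\lambda$, hence with $F(a_j)$.

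This is exactly the computation in the proof of Lemma \ref{FBddprime}. It is carried out there for $j=5,7$, but it works the same way for every $j\ge 5$. This is the step your proposal needs in place of the false commutation of $d$ with $y_j$.
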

\begin{proof}
For (i), we write relation (B) as $dy_5dy_5^{-1}=w_4$, or equivalently,
$\iota_3(\sigma_5\eta_5\sigma_5\eta_5^{-1})=\iota_3(w_4)$,
or indeed $\iota_3(\sigma_5\eta_5\sigma_5)=\iota_3(w_4\eta_5)$.
Since $\sigma_5\eta_5\sigma_5=\eta_6$ and $w_4\eta_5=w_5$ this is equivalent to
$\iota_3(\eta_6)=\iota_3(w_5)$, so we must show that 
$$F\bigl(\iota_3(\eta_6)\bigr)=F\bigl(\iota_3(w_5)\bigr).$$ 
By the previous corollary, we have
$\iota_3\circ F=F\circ \iota_3$, so the desired equality can be written
$$\iota_3\bigl(F(\eta_6)\bigr)=\iota_3\bigl(F(w_5)\bigr).$$
Since $F(\eta_6)=\eta_6^\lambda$ and $F(w_5)=w_5^\lambda$ in $\B_n$, this equality holds,
proving that $F$ respects relation (B).

For (ii), we make use of the result from Proposition \ref{braidrels} which 
shows that $F(y_k)=y_k^\lambda$. Since $F(d)=d^\lambda$ by \eqref{Faction}
and $F(a_k)=f(a_k^2,y_k)a_kf(y_k,a_k^2)$, the commutation of $F(d)$ and $F(a_k)$
follows from the commutation of $d$ with both $a_k$ and $y_k$ for $k\ne 4$.
\end{proof}

\vspace{.3cm}
Thanks to the above and to the fact that by Theorem \ref{wajnryb}, the presentation of 
$\widehat\Gamma_{2,1}$ requires only relations (A) and (B), we have now obtained
the desired result in genus $g=2$.

\begin{cor}\label{genus2} The action of $F\in\GT$ proposed in \eqref{Faction}
extends to an automorphism of $\widehat\Gamma_{2,1}$.
\end{cor}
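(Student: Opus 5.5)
The plan is to assemble the relation checks already carried out and then pass from an endomorphism to an automorphism. By Theorem \ref{wajnryb} with $g=2$, $\widehat\Gamma_{2,1}$ is topologically presented by the generators $d,a_1,\ldots,a_4$ subject only to the relations (A) and (B); relations (C) and (C$'$) are not needed in genus $2$. So the formulas \eqref{Faction} define a continuous endomorphism of $\widehat\Gamma_{2,1}$ as soon as the images $F(d),F(a_1),\ldots,F(a_4)$ satisfy (A) and (B). These are exactly the checks already done:
\begin{itemize}
\item Proposition \ref{braidrels} gives the braid and commutation relations among the $a_i$.
\item Corollary \ref{Fiotacommute}, applied with $g=2$ and $k=3$ (allowed, since $3\le g+1$), gives $a_4da_4=da_4d$.
\item Corollary \ref{relationB}(ii) gives the commutation of $d$ with $a_1,a_2,a_3$.
\item Corollary \ref{relationB}(i) gives (B).
\end{itemize}
Since $\widehat\Gamma_{2,1}$ is the profinite group topologically presented by these data, the universal property yields a continuous endomorphism $\Phi_F$ extending \eqref{Faction}.

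To upgrade $\Phi_F$ to an automorphism, I will show that $F\mapsto\Phi_F$ is compatible with composition in $\GT$. That is, I want $\Phi_{F_1}\circ\Phi_{F_2}=\Phi_{F_1F_2}$ for the group law of $\GT$,
$$(\lambda_1,f_1)(\lambda_2,f_2)=\bigl(\lambda_1\lambda_2,\;f_1\cdot F_1(f_2)\bigr),$$
with $\Phi_{(1,1)}$ the identity. Taking $F_2=F^{-1}$ then gives a two-sided inverse.

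The cleanest way to see this compatibility is to transport it through the homomorphism $\iota_3:\B_6\to\widehat\Gamma_{2,1}$. Its image contains all the generators $a_1,\ldots,a_4,d$, since $\iota_3(\sigma_5)=d$; hence $\iota_3$ is surjective onto $\widehat\Gamma_{2,1}$. Corollary \ref{Fiotacommute} shows that $\Phi_F\circ\iota_3=\iota_3\circ F$, where $F$ acts on $\B_6$ by \eqref{drinfeld}. The Drinfeld action on $\B_6$ is a group action of $\GT$, which is classical and part of Theorem \ref{drinfeldaction}. Then
$$\Phi_{F_1}\Phi_{F_2}\,\iota_3=\iota_3\,F_1F_2=\Phi_{F_1F_2}\,\iota_3.$$
Surjectivity of $\iota_3$ lets me cancel it on the right, which gives the compatibility.

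I expect the main obstacle to be checking that $\iota_3$ really is surjective. If the image of $\iota_3$ were too small, the transport argument would fail. This should be immediate, because the image of $\iota_3$ contains every Wajnryb generator of $\widehat\Gamma_{2,1}$. As a fallback, I would argue directly that $\Phi_F$ is surjective: its image contains $a_1^\lambda$ and $d^\lambda$, hence $a_1$ and $d$ because $\lambda$ is a unit in $\widehat{\mathbb Z}$, and then inductively $a_i$ after conjugating back. A surjective continuous endomorphism of a topologically finitely generated profinite group is an automorphism, which would finish the argument without the composition law.
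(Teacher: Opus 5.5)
Your verification of (A) and (B) is exactly the paper's proof. The paper cites Proposition \ref{braidrels}, Corollaries \ref{Fiotacommute} and \ref{relationB}, and the fact that in genus $2$ Theorem \ref{wajnryb} needs only (A) and (B). It stops there, tacitly treating the resulting continuous endomorphism as an automorphism.

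Your second half supplies this omitted bijectivity step, and your main argument is sound. $\iota_3(\B_6)$ is a closed subgroup containing $a_1,\ldots,a_4,d$, so $\iota_3$ is onto. The identity $\Phi_F\circ\iota_3=\iota_3\circ F$ holds because both sides are continuous homomorphisms agreeing on the $\sigma_i$, and it transports the composition law from $\B_6$. One caveat: that \eqref{drinfeld} gives a homomorphism $\GT\to{\rm Aut}(\B_n)$ is not literally contained in Theorem \ref{drinfeldaction} as stated. It is classical, though, and a short check from $F(\eta_i)=\eta_i^\lambda$: the evaluation $x\mapsto\eta_i$, $y\mapsto\sigma_i^2$ intertwines the action on $\widehat F_2$ with the action on $\B_n$.

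Your fallback, on the other hand, is circular as written. The conjugator $f(y_i,a_i^2)$ in $\Phi_F(a_i)$ involves $a_i$, so you cannot ``conjugate back'' before knowing that $a_i$ (or $a_i^2$) lies in the image. You don't need it anyway. Since $F$ is bijective on $\B_6$,
$\Phi_F(\widehat\Gamma_{2,1})=\Phi_F\iota_3(\B_6)=\iota_3F(\B_6)=\iota_3(\B_6)=\widehat\Gamma_{2,1}$.
The Hopfian property of topologically finitely generated profinite groups then gives injectivity, with no appeal to the group law of $\GT$ at all.
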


\section{Relation (C) and the case of genus 3}

From now to the end of the article, we let $g\ge 3$. Consider the elements $t_1$, $t_2$ 
and $t_3$ defined in Theorem \ref{wajnryb}. These elements are precisely the images of 
the elements $\tau_1,\tau_2$ and $\tau_3\in \B_8$ defined in \eqref{tausigmas}
under the homomorphism $\iota:\B_8\rightarrow \GGone$ mapping $\sigma_i\mapsto a_i$. 

Let ${\mathcal{B}}$, ${\mathcal{B}'}$ and ${\mathcal{B}''}$  be the maximal bracketings 
given in \eqref{threebracketings}. 
It is shown in Theorem \ref{doublestrandsubgroup} that 
\begin{equation}\label{newFaction}
F_{\mathcal{B}}={\rm inn}\bigl(f(y_3,a_3^2)f(y_5,a_5^2)f(y_7,a_7^2)\bigr)\circ F\\
\end{equation}

where by Lemma \ref{technical}, $F\in \GT$ is an automorphism of $\iota(\B_{2g})\subset \GGone$
acting on the $a_i$ by \eqref{drinfeld} (with $\sigma_i$ replaced by $a_i$), and thus 
the automorphism in \eqref{newFaction} of $\B_8$ is also an automorphism of
$\iota(\B_8)\subset\GGone$, acting as in Theorem \ref{Amovetheorem}. 
In particular, comparing with \eqref{drinfeld} we see that 
$F_{\mathcal{B}}(\sigma_i)=\sigma_i^\lambda$ for $i=1,3,5,7$. Also, we know by Theorem 
\ref{doublestrandsubgroup} that $F_{\mathcal{B}}$ acts on $\tau_1$, 
$\tau_2$ and $\tau_3$ analogously to \eqref{drinfeld}, i.e.
\begin{equation}\label{t1t2t3}
\begin{cases}
F_{\mathcal{B}}(t_1)=t_1^\lambda;
\\
F_{\mathcal{B}}(t_2)=f(t_2^2,t_1^2)\,t_2^\lambda \,f(t_1^2,t_2^2);
\\
F_{\mathcal{B}}(t_3)=f(t_3^2,t_2t_1^2t_2)\,t_3^\lambda \,f(t_2t_1^2t_2,t_3^2).
\end{cases}
\end{equation}
By a direct application of relation (II) of 
$\widehat{GT}$, we can write $F_{\mathcal{B}}(t_2)$ in the form
\begin{equation}\label{FBt2}
F_{\mathcal{B}}(t_2)=t_1^{-2m}t_2f(t_1^2,t_2^{-1}t_1^2t_2)t_1^{-2m}c^m
\end{equation}
where $c=(t_1t_2)^3=(t_1^2)\cdot (t_2^2)\cdot (t_2^{-1}t_1^2t_2)$ is the center of the subgroup 
$\langle t_1,t_2\rangle\subset \GGone$.

For the purpose of proving relation (C), we only need to study the action of $F_{\mathcal{B}}$
on the subgroup $\B_6$ of $\B_8$, i.e.~we only need to consider $t_1$ and $t_2$.
We can now turn to the proof that the action of $F$ respects relation (C), given by
$d_3a_6d_3=a_6d_3a_6$ with $d_3$ defined in Theorem \ref{wajnryb},
which is the Dehn twist along the curve $\delta_3$ shown in Figure 2. In order
to check this relation, we first need to compute the value of $F$ on the element $d_3$. 

\subsection{Three useful lemmas}

\begin{lem}[Haiku Lemma] {\it (i) Suppose $\{a,b\}$ and $\{A,B\}$ are two pairs of elements of 
a profinite group $G$ such that $a$ and $b$ both commute with $A$ and $B$.  
Then $f(a,b)f(A,B)=f(aA,bB)$.

\vspace{.1cm}\noindent
(ii) Suppose $a,b,g$ are elements of a profinite group $G$
such that $g$ commutes with $a$ and $b$, and suppose that $f\equiv a^rb^s$ in $G^{\rm ab}$.
Then $f(a,gb)=f(a,b)g^s$.

\vspace{.1cm}
\noindent (iii) Suppose that $a,b,g,f\in G$ are as in (ii) except that while $g$ still commutes with
$a$, we now assume that $gb=bg^{-1}$. Then $f(a,gb)=f(a,b)g^e$ where $e=0$ if $s$ is even, 
$e=-1$ if $s$ is odd.}
\end{lem}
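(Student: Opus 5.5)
We prove the three parts in order. In each part $f$ is regarded as an element of the free profinite group $\widehat F_2=\langle x,y\rangle$, and $f(u,v)$ denotes its image under the unique continuous homomorphism $\widehat F_2\to G$ with $x\mapsto u$, $y\mapsto v$.

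For (i), let $H_1$ and $H_2$ be the closed subgroups of $G$ topologically generated by $\{a,b\}$ and by $\{A,B\}$. Because $a$ and $b$ commute with $A$ and $B$, and centralizers are closed, $H_1$ and $H_2$ commute elementwise. Hence the multiplication map $\mu:H_1\times H_2\to G$, $(h_1,h_2)\mapsto h_1h_2$, is a continuous homomorphism. Define $\phi:\widehat F_2\to H_1\times H_2$ by $x\mapsto(a,A)$ and $y\mapsto(b,B)$. Its two projections are the evaluation maps at $(a,b)$ and at $(A,B)$, so $\phi(f)=(f(a,b),f(A,B))$. Since $\mu\circ\phi$ sends $x\mapsto aA$ and $y\mapsto bB$, applying $\mu$ gives $f(aA,bB)=f(a,b)f(A,B)$.

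For (ii), apply (i) with the pairs $\{a,b\}$ and $\{A,B\}=\{1,g\}$. This gives $f(a,gb)=f(a,b)f(1,g)$. The map $x\mapsto1$, $y\mapsto g$ lands in the abelian procyclic group $\overline{\langle g\rangle}$, so it factors through $\widehat F_2^{\rm ab}$. Since $f\equiv x^ry^s$ there, we get $f(1,g)=g^s$, with $s\in\widehat{\mathbb Z}$.

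For (iii) the idea is to realize right multiplication of $b$ by $g$ as a conjugation that fixes $a$, and then read off the correction term. Since $b$ inverts $g$, for any $h$ in the closure of $\langle g\rangle$ we have $hbh^{-1}=h^2b$. If $g=h^2$ for such an $h$, then $hbh^{-1}=gb$, and $h$ commutes with $a$. It follows that
$$f(a,gb)=h\,f(a,b)\,h^{-1}=f(a,b)\cdot\bigl(f(a,b)^{-1}hf(a,b)\bigr)h^{-1}.$$
Conjugation by $f(a,b)$ acts on $\overline{\langle h\rangle}$ through the parity of its $b$-exponent, because $a$ acts trivially and $b$ by inversion. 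So $f(a,b)^{-1}hf(a,b)=h^{(-1)^s}$, and the correction term is $h^{(-1)^s-1}$. This equals $1$ for $s$ even and $h^{-2}=g^{-1}$ for $s$ odd, which gives $e=0$ or $e=-1$ as claimed.

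The main obstacle is that $g$ need not have a square root in $G$. To get around this, we work in a universal group. Let $U=\widehat{\mathbb Z}\rtimes\widehat F_2$, written $U=\overline{\langle h\rangle}\rtimes\langle x,y\rangle$, where $x$ acts trivially and $y$ acts by inversion. The identity $f(x,h^2y)=f(x,y)h^{2e}$ then holds in $U$ by the computation above. Next, the homomorphism $\widehat{\mathbb Z}\to\widehat{\mathbb Z}$, $1\mapsto2$, is injective. Hence the closed subgroup of $U$ generated by $x$, $y$ and $h^2$ is isomorphic to $\widehat{\mathbb Z}\rtimes\widehat F_2$ with the same actions, now on a generator $h^2$. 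This group maps to $G$ by $x\mapsto a$, $y\mapsto b$, $h^2\mapsto g$. The map is well defined because $g$ commutes with $a$ and $b$ inverts $g$, which implies that the map $n\mapsto g^n$ from $\widehat{\mathbb Z}$ to $G$ is equivariant for the actions of $x$ and $y$. Since both sides of the identity lie in this subgroup, pushing it forward to $G$ gives $f(a,gb)=f(a,b)g^{e}$.
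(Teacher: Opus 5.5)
Your proof is correct. Your argument for (ii) is the paper's: apply (i) with $\{A,B\}=\{1,g\}$. For (i) and (iii), however, you take a genuinely different route.

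The paper proves both statements first for discrete words and then passes to the profinite limit without spelling out that step. For (iii) it inducts on the number of occurrences of $b^{\pm1}$, using that a stray $g^{-1}$ turns into $g$ when moved past a single $b$, so the $g$'s cancel in pairs.

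You instead prove (i) directly from the universal property of $\widehat F_2$: you map into $H_1\times H_2$ and compose with the multiplication homomorphism. You prove (iii) by the conjugation identity $gb=hbh^{-1}$ for a formal square root $h$ of $g$. You realize this in $U=\overline{\langle h\rangle}\rtimes\widehat F_2$ and transport it to $G$ through the subgroup $\overline{\langle h^2\rangle}\rtimes\widehat F_2$, which has the same universal form.

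What your route buys is that everything happens at the level of $f\in\widehat F_2$:
\begin{itemize}
\item the correction term is visibly $h^{\epsilon(f)-1}$, with $\epsilon$ the character $x\mapsto 1$, $y\mapsto -1$;
\item the parity of the profinite exponent $s$ enters only through $\epsilon$;
\item you avoid the approximation issue the paper glosses over. A discrete word standing in for $f$ must agree with $f$ in the relevant finite quotients at both $(a,b)$ and $(a,gb)$, and must have the same parity of $y$-exponent.
\end{itemize}
In exchange, the paper's induction is completely elementary and can be checked by hand on any given word.

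Two points you leave implicit are harmless. Equivariance of $n\mapsto g^n$ on the topological generators $x,y$ extends to all of $\widehat F_2$, because the set where it holds is a closed subgroup. You also rightly read the hypothesis ``$f\equiv a^rb^s$ in $G^{\rm ab}$'' as a statement about the image of $f$ in $\widehat F_2^{\rm ab}$, which is what makes $s$ well defined.
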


\begin{proof}

(i) is easy to show when $f$ is a word in a finite quotient of
$G$; then we can take the limit from finite words to pro-words. (ii) follows directly from (i)
with $a=1$ and $b=g$. We prove (iii) again for all words $f$ in finite quotients of $G$, but
now we do it by induction. The base cases are words containing 1 or 2 occurrences of $b$ or $b^{-1}$.
If $f(a,b)$ is a word containing a single $b$ or $b^{-1}$, then $f(a,gb)=f(a,b)g^{-1}$,
If $f$ contains two occurrences
of $b$ or $b^{-1}$, then $f(a,gb)=f(a,b)$ as passing the $g$'s between the two occurrences of $b$
causes them to cancel. We complete the proof by induction. Let $w$ be a word having $m$ occurrences
of $b$ or $b^{-1}$, so that $w(a,gb)=w(a,b)g^e$, and let $w'$ be a word with a single occurrence
of $b$ or $b^{-1}$; let $f=ww'$. Then $f(a,gb)=w(a,gb)w'(a,b)=w(a,b)g^ew'(a,b)g^{-1}$.
If $e=0$ then the result is proven. If $e=-1$, then $f(a,gb)=w(a,b)g^{-1}w'(a,b)g^{-1}$ and
the middle $g^{-1}$ changes to $g$ when it passes from left to right of the single $b$ in $w'$,
so we get $f(a,gb)=f(a,b)$. 
\end{proof}

\begin{lem}\label{useful0}
(i) For $i=2,3,4$, $y_i$ commutes with $d$ and $d'$, and for $i=5,\ldots,g+1$, $y_i^2$ commutes
with $d$ and $d'$.

\vspace{.1cm}\noindent
(ii) For $i=5,\ldots,g+1$, we have
\begin{equation}\label{useful1}
d'=y_idy_i^{-1}=y_i^{-1}dy_i.
\end{equation}
\end{lem}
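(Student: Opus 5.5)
The plan is to avoid geometry and use only the relations (A) and (B) of Theorem \ref{wajnryb}, the identity $y_{k+1}=a_ky_ka_k$, and the commutation of the images of the $\eta_i$ and $\omega_i$ under $\iota$. The key reformulation is relation (B), written as
$$d'=d^{-1}w_4,\qquad w_4=(a_1a_2a_3)^4=\iota(\omega_4).$$
The point is that $w_4$ is built from $a_1,a_2,a_3$ only. Hence $w_4$ commutes with $a_j$ for $j\ge5$ by (A), and with every $y_k$, since the $\eta_k$ and $\omega_k$ commute pairwise in $\B_k$.

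\emph{Part (i) for $i=2,3,4$.} Here $y_i$ is a word in $a_1,a_2,a_3$. By (A), $d$ commutes with $a_j$ for $j\ne4$, so $y_i$ commutes with $d$. It also commutes with $w_4$, so it commutes with $d'=d^{-1}w_4$.

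\emph{Behaviour of $d'$ under $a_j$, $j\ge5$.} Both $d$ and $w_4$ commute with $a_j$ for $j\ge5$, so $d'=d^{-1}w_4$ commutes with every $a_j$, $j\ge5$. This is the algebraic counterpart of $\delta'_2$ being disjoint from $\alpha_j$ for $j\ge5$.

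\emph{Base case $i=5$: $y_5$ swaps $d$ and $d'$.} By definition $y_5dy_5^{-1}=d'$. Since $y_5$ commutes with $w_4$, we get
$$y_5d'y_5^{-1}=y_5d^{-1}w_4y_5^{-1}=d'^{-1}w_4=d.$$
Thus conjugation by $y_5$ interchanges $d$ and $d'$. Consequently $y_5^2$ commutes with both $d$ and $d'$, and $y_5^{-1}dy_5=d'$ as well.

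\emph{Induction for $i>5$.} Write $y_i=a_{i-1}y_{i-1}a_{i-1}$ with $i-1\ge5$. Both $d$ and $d'$ commute with $a_{i-1}$. Assuming $y_{i-1}$ interchanges $d$ and $d'$, we obtain
$$y_idy_i^{-1}=a_{i-1}y_{i-1}\,d\,y_{i-1}^{-1}a_{i-1}^{-1}=a_{i-1}d'a_{i-1}^{-1}=d'.$$
In the same way $y_id'y_i^{-1}=d$. So every $y_i$ with $i\ge5$ interchanges $d$ and $d'$. This gives (ii), namely $d'=y_idy_i^{-1}=y_i^{-1}dy_i$, and the remaining half of (i), that $y_i^2$ commutes with $d$ and $d'$.

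The only step needing care is the base case. There one must see that $y_5$ sends $d'$ back to $d$, which is exactly where relation (B) and the commutation of $y_5=\iota(\eta_5)$ with $w_4=\iota(\omega_4)$ are used. Everything after that is formal.
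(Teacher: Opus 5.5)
Your proof is correct, but it reaches the result by a different route from the paper's.

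\textbf{The paper's route.} It argues from the pictures. The elements $y_2,y_3,y_4$ commute with $d,d'$ because $\alpha_1,\alpha_2,\alpha_3$ are disjoint from $\delta,\delta'$. For $i\ge5$ it writes $y_i^2=w_{i-1}^{-2}w_i^2$ and uses that the $w_k^2$ commute with $d,d'$, since they are twists along loops disjoint from $\delta,\delta'$. It then proves $d'=y_idy_i^{-1}$ by the same induction you use, again reading the commutation of $d'$ with $a_j$, $j\ne4$, off Figure~2. Only at the end does it get $d'=y_i^{-1}dy_i$, by conjugating with $y_i^{-2}$; so (ii) is deduced from (i).

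\textbf{Your route.} You reverse this order. Writing (B) as $d'=d^{-1}w_4$, you obtain the commutation of $d'$ with $a_j$, $j\ge5$, purely algebraically. You show directly that conjugation by $y_5$ interchanges $d$ and $d'$, and propagate this interchange by induction. Both halves of (ii) and the $y_i^2$ part of (i) then come out at once.

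\textbf{What each buys.} Your argument uses only relations (A), (B) and standard braid identities, so it holds verbatim in any group with Wajnryb's presentation, in particular in $\GGone$. It also avoids identifying each $w_k^2$ geometrically: for odd $k$ it is a twist along $\omega_{(k+1)/2}$, for even $k$ it is $d_{k/2}^2{d'_{k/2}}^2$. Finally, it isolates the swap property of the $y_i$, which the paper only extracts later in the form \eqref{useful3}. The paper's argument is shorter to state, but rests on the surface pictures.

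\textbf{One step to make explicit.} In the base case, the equality $d'^{-1}w_4=d$ requires $w_4$ to commute with $d$. This holds by (A), since $w_4\in\langle a_1,a_2,a_3\rangle$, but you should say so.
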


\begin{proof}

 (i) The three elements $y_2,y_3$ and $y_4$ obviously commute with
$d$ and $d'$ since they are products of the twists $a_1$, $a_2$ and $a_3$ whose underlying
loops are disjoint from $\delta$ and $\delta'$. Let $i>4$. 
We use the identity $w_{i-1}y_i=w_i$, and the fact that the
elements $w_i^2$ are Dehn twists along the loops $\omega_i$ (see Figure 2) which are 
all disjoint from $d$ and $d'$.  The $w_i$ and the $y_j$ all commute between each other
and with each other (recall that $w_i=y_2\cdots y_i$), so we have
$w_i^2=(w_{i-1}y_i)^2=w_{i-1}^2y_i^2$. Thus since $w_{i-1}^2$ and $w_i^2$ both 
commutes with $d$ and $d'$, $y_i^2$ must as well. 

\vspace{.1cm} \noindent (ii) For $i=5$, we have $d'=y_5dy_5^{-1}$ by definition. Let $i>5$
and write $y_i=(a_{i-1}\cdots a_5)y_5(a_5\cdots a_{i-1})$.
Then
$$y_idy_i^{-1}=(a_{i-1}\cdots a_5)y_5(a_5\cdots a_{i-1})d(a_{i-1}\cdots a_5)^{-1}y_5^{-1}(a_5\cdots a_{i-1})^{-1}.$$
Because $d$ commutes with all $a_i$ for $i\ne 4$, this is equal to
$$(a_{i-1}\cdots a_5)y_5\,d\,y_5^{-1}(a_5\cdots a_{i-1})^{-1}
=(a_{i-1}\cdots a_5)\,d'\, (a_5\cdots a_{i-1})^{-1},$$
but since $d'$ also commutes with all $a_j$ for $j\ne 4$, we obtain 
$d'=y_idy_i^{-1}$. Finally, since $y_i^2$ commutes with $d'$ by (i), we can conjugate this equality
by $y_i^{-2}$ to find that $d'$ is also equal to $y_i^{-1}dy_i$, completing the proof.
\end{proof}

\begin{lem}\label{FBddprime} $F_{\mathcal{B}}(d)=d^\lambda$ and $F_{\mathcal{B}}(d')={d'}^\lambda$.
\end{lem}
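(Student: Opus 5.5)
The plan is to use the explicit comparison \eqref{newFaction} between $F_{\mathcal{B}}$ and $F$:
$$F_{\mathcal{B}}={\rm inn}(h)\circ F,\qquad h=f(y_3,a_3^2)f(y_5,a_5^2)f(y_7,a_7^2).$$
First I compute $F(d)$ and $F(d')$. We have $F(d)=d^\lambda$ by \eqref{Faction}, and $F(y_5)=y_5^\lambda$ by Proposition \ref{braidrels}. Hence
$$F(d')=F(y_5dy_5^{-1})=y_5^\lambda d^\lambda y_5^{-\lambda}.$$
Since $\lambda\in\widehat{\mathbb Z}^*$ is odd, $y_5^{\lambda-1}$ lies in the closure of $\langle y_5^2\rangle$. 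By Lemma \ref{useful0}(i), $y_5^2$ commutes with $d$, and centralizers are closed, so $y_5^{\lambda-1}$ also commutes with $d$. Therefore $F(d')=y_5d^\lambda y_5^{-1}=d'^\lambda$. It then suffices to show that $h$ commutes with both $d$ and $d'$. I will do this factor by factor.

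The first factor is easy. The elements $y_3$ and $a_3$ commute with $d$ and $d'$ by Lemma \ref{useful0}(i), since the underlying loops are disjoint. Hence $f(y_3,a_3^2)$ centralizes $d$ and $d'$.

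For the factors with $i=5,7$, consider the closed subgroup $H_i=\overline{\langle y_i,a_i^2\rangle}$. I claim $H_i$ acts by conjugation on the two-element set $\{d,d'\}$.
\begin{itemize}
\item The generator $a_i$ fixes both $d$ and $d'$, because $d$ and $d'$ commute with all $a_j$ for $j\ne4$.
\item The generator $y_i$ sends $d$ to $d'$ by Lemma \ref{useful0}(ii).
\item It also sends $d'$ back to $d$: indeed $y_id'y_i^{-1}=y_i^2dy_i^{-2}=d$, using that $y_i^2$ commutes with $d$.
\end{itemize}
So both generators normalize the set $\{d,d'\}$. The stabilizer of this finite set under conjugation is closed, hence it contains $H_i$. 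This gives a continuous homomorphism $H_i\to S_2$ with $y_i\mapsto$ the transposition and $a_i^2\mapsto 1$.

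Since $S_2$ is abelian and finite, the kernel of this homomorphism is a closed subgroup containing $[H_i,H_i]$, hence containing its closure. Now $f$ lies in the closure of the derived subgroup of the free profinite group on two generators. Therefore $f(y_i,a_i^2)$ lies in the closure of $[H_i,H_i]$, and so it acts trivially on $\{d,d'\}$, i.e.\ it commutes with $d$ and with $d'$.

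Putting the three factors together, $h$ centralizes both $d$ and $d'$. This gives $F_{\mathcal{B}}(d)=hd^\lambda h^{-1}=d^\lambda$ and $F_{\mathcal{B}}(d')=hd'^\lambda h^{-1}=d'^\lambda$.

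The only delicate step is the one for $i=5,7$, where $y_i$ does not commute with $d$. I handle it by the parity (abelian $S_2$-quotient) argument rather than by a direct word computation. The same conclusion could alternatively be reached from the Haiku Lemma (iii), with the swap playing the role of $gb=bg^{-1}$.
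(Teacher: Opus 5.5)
Your proof is correct, and its outline is the paper's: compute $F(d')={d'}^\lambda$ from $F(y_5)=y_5^\lambda$, then show that the conjugating element $h=f(y_3,a_3^2)f(y_5,a_5^2)f(y_7,a_7^2)$ centralizes both $d$ and $d'$, with the $i=3$ factor dropping out by disjointness. In the first step you are more careful than the paper, which silently uses that $y_5^{\lambda-1}=(y_5^2)^m$ commutes with $d$.

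The two routes differ only in the factors $i=5,7$. The paper works element by element. It rewrites $d^{-1}y_id=d^{-1}d'y_i$ using $d'y_i=y_id$, and shows that $g=d^{-1}d'$ commutes with $a_i^2$ and anti-commutes with $y_i$, i.e.\ $gy_i=y_ig^{-1}$. It then applies part (iii) of the Haiku Lemma, proved by induction on words in finite quotients, to get $f(a_i^2,gy_i)=f(a_i^2,y_i)$. It does this separately for $d$ and for $d'$.

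You instead note that $\overline{\langle y_i,a_i^2\rangle}$ permutes $\{d,d'\}$ by conjugation. This action factors through the abelian group $S_2$, so it kills the closure of the derived subgroup, and that closure contains $f(y_i,a_i^2)$.

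Both arguments rest on the same parity phenomenon. Yours treats $d$ and $d'$ at once, avoids the word-level induction, and makes the profinite closure issues explicit: closed stabilizers and a continuous map to a finite group. The paper's version has the advantage of staying inside the Haiku Lemma toolkit, which it reuses elsewhere (e.g.\ to pass to \eqref{HaikuB}).
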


\begin{proof}

By \eqref{Faction}, we know that $F(d)=d^\lambda$, and by 
Proposition \ref{braidrels}, we know that $F(y_i)=y_i^\lambda$ for $i=2,\ldots,g+1$.
By definition, $d'=y_5dy_5^{-1}$, so we also have
$$F(d')=F(y_5)F(d)F(y_5^{-1})=y_5^\lambda F(d) y_5^{-1}=y_5 d^\lambda y_5^{-1}={d'}^\lambda.$$
Thus
\begin{equation*}
\begin{cases}
F_{\mathcal{B}}(d)=f(a_7^2,y_7)f(a_5^2,y_5)f(a_3^2,y_3)\,d^\lambda\, f(y_3,a_3^2)f(y_5,a_5^2) f(y_7,a_7^2)\\
F_{\mathcal{B}}(d')=f(a_7^2,y_7)f(a_5^2,y_5)f(a_3^2,y_3)\,{d'}^\lambda\, f(y_3,a_3^2)f(y_5,a_5^2) f(y_7,a_7^2).
\end{cases}
\end{equation*}
The twists $d$ and $d'$ commutes with $a_3$, $a_5$ and $a_7$ since the underlying loops are disjoint, and they also commute with $y_3$ as observed in
Lemma \ref{useful0} (i), so
the inner term $f(y_3,a_3^2)$ drops out and we have
\begin{equation*}
\begin{cases}
F_{\mathcal{B}}(d)=f(a_7^2,y_7)f(a_5^2,y_5)\,d^\lambda\, f(y_5,a_5^2) f(y_7,a_7^2)\\
F_{\mathcal{B}}(d')=f(a_7^2,y_7)f(a_5^2,y_5)\,{d'}^\lambda\, f(y_5,a_5^2) f(y_7,a_7^2).
\end{cases}
\end{equation*}
Rewriting the first equality of \eqref{useful1} as
\begin{equation}\label{useful2}
{d'}^{-1}y_i=y_id^{-1} \ \ \ {\rm and}\ \ \ d'y_i=y_id,
\end{equation} 
and using the commutation of $y_i^2$ with $d$, we find that for $i>4$, $y_i$ anti-commutes with
$d^{-1}d'$, i.e.
\begin{equation}\label{useful3}
y_id^{-1}d'={d'}^{-1}y_id'={d'}^{-1}y_i^2dy_i^{-1}={d'}^{-1}dy_i=d{d'}^{-1}y_i,
\end{equation}
or equivalently, $d^{-1}d'y_i=y_id{d'}^{-1}$.
By (iii) of the Haiku Lemma, we deduce from \eqref{useful3} that for $i=5,7$, we have
\begin{equation}\label{useful4}
f(a_i^2,y_i\cdot d^{-1}d'y_i)=f(a_i^2,y_i\cdot d^{-1}d')=f(a_i^2,y_i),
\end{equation}
since $f$ lies in the derived subgroup of $\widehat{F}_2$.  So we see that for $i=5,7$ we have
\begin{align*}
f(a_i^2,y_i)\,d\,f(y_i,a_i^2)&=d\,f(a_i^2,d^{-1}y_id)f(y_i,a_i^2)\\
&=df(a_i^2,d^{-1}d'y_i)f(y_i,a_i^2)\hbox{ by }\eqref{useful2}\\
&=df(a_i^2,y_i)f(y_i,a_i^2)\hbox{ by }\eqref{useful4}\\
&=d.
\end{align*}
and
\begin{align*}
f(a_i^2,y_i)\,d'\,f(y_i,a_i^2)&=d'\,f(a_i^2,{d'}^{-1}y_id')f(y_i,a_i^2)\\
&=d'f(a_i^2,y_id^{-1}d')f(y_i,a_i^2)\hbox{ by }\eqref{useful2}\\
&=d'f(a_i^2,y_i)f(y_i,a_i^2)\hbox{ by }\eqref{useful4}\\
&={d'}.
\end{align*}
Thus in particular both $f(a_5^2,y_5)$ and $f(a_7^2,y_7)$ commute with $d$ and $d'$, proving
the Lemma.
\end{proof}

\subsection{The lantern relation and relation (II)}

The usual ``full'' relation (II) on elements $(\lambda,f)$ of $\GT$ is written as
$$f(x,y)x^mf(z,x)z^mf(y,z)y^m=1$$ 
where $xyz=1$ and $m=(\lambda-1)/2$. This can be generalized to the case where the element 
$xyz$ commutes with $x$, $y$ and $z$, in which case the relation becomes
\begin{equation}\label{relIIversion0}
f(x,y)x^m f(z,x)z^m f(y,z)y^m=(xyz)^m,
\end{equation}
as can be easily seen by passing to the abelianization and recalling that $f$ is a (profinite)
commutator (see also [NS]). Let $S$ be a topological surface of genus $0$ with four boundary components,
and $\chi$, $\upsilon$ and $\zeta$ represent the three standard loops on $S$ surrounding the
boundary components $(1,2)$,$(1,3)$ and $(2,3)$ respectively. Then letting $x$, $y$ and $z$
denote the Dehn twists along $\chi$, $\upsilon$ and $\zeta$ respectively, 
and writing $b_1,\ldots,b_4$ for the Dehn twists along the four
boundary components, the ``lantern relation'' on Dehn twists on $S$ is given by $xyz=b_1b_2b_3b_4$,
and the standard action of $F$ on the (profinite) mapping class group of $F$ results in
the version of relation (II) written as
\begin{equation}\label{relIIversion1}
f(x,y)x^m f(z,x)z^m f(y,z)y^m=(b_1b_2b_3b_4)^m.
\end{equation}
Observing that if $(\lambda,f)$ lies in $\GT$ then so does $(-\lambda,f)$, the relation 
\eqref{relIIversion1} holding for $m=(\lambda-1)/2$ also holds for $(-\lambda-1)/2=-m-1$, giving
\begin{equation}\label{relIIversion2}
f(x,y)x^{-m-1} f(z,x)z^{-m-1} f(y,z)y^{-m-1}=(b_1b_2b_3b_4)^{-m-1},
\end{equation}
which can be inverted (and the terms cyclically permuted) to give the identity
\begin{equation}\label{relIIversion3}
f(x,z)x^{m+1}f(y,x)y^{m+1}f(z,y)z^{m+1}=(b_1b_2b_3b_4)^{m+1}.
\end{equation}

We will now apply this to the situation where the subsurface $S$ of type $(0,4)$ is cut out of
the surface of type $(g,1)$ in Figure 1 by the loops $a_1$, $a_3$, $a_5$ and $d_3$. Consider
the Dehn twists
\begin{equation}\label{g1g2def}
g_1=t_2^{-1}dt_2\ \ \ {\rm and}\ \ \ g_2=t_1^{-1}t_2^{-1}dt_2t_1,
\end{equation}
along the loops pictured in Figure 3 above. 
Together with $d_2$, these three diffeomorphisms are twists along the three
standard loops $\delta_2$, $\gamma_{1,2}$ and $\gamma_{1,3}$ on the genus $0$ 
subsurface $S$ cut out by the four boundary components $\alpha_1$, 
$\alpha_3$, $\alpha_5$ and $\delta_3$ (see Figure~3). Thus they satisfy the lantern relation
\begin{equation}\label{lantern}
g_2g_1d=a_1a_3a_5d_3,
\end{equation}
which is how the complicated-looking defining relation $d_3=g_2g_1da_1^{-1}a_3^{-1}a_5^{-1}$
arises, since $d_3$ is a priori not in the generating set of $\GGone$. Since this is a lantern
relation, we can apply \eqref{relIIversion0}
with $x=d$, $y=g_2$, $z=g_1$ to obtain
\begin{equation}\label{relIIversion4}
f(d,g_2)d^mf(g_1,d)g_1^mf(g_2,g_1)g_2^m=(g_2g_1d)^m,
\end{equation}
and we can also apply \eqref{relIIversion3} taking
$x=g_2$, $y=g_1$, $z=d$, $b_1, b_2, b_3, b_4=a_1,a_3,a_5,d_3$  and using \eqref{lantern} to write
\begin{equation}\label{relIIversion5}
f(g_2,d)g_2^{m+1}f(g_1,g_2)g_1^{m+1}f(d,g_1)d^{m+1}=(g_2g_1d)^{m+1}.
\end{equation}
We will use these last two equalities in the next subsection to compute $F(d_3)$.

\subsection{Computation of $F(d_3)$}\label{CompFd3} To compute $F(d_3)$, we will compute the values 
$F$ takes on the elements $g_1$, $g_2$.  We start by computing the values of $F_{\mathcal{B}}$
on these elements. Using $F_{\mathcal{B}}(d)=d^\lambda$, $F_{\mathcal{B}}(t_1)=t_1^\lambda$ 
(by \eqref{t1t2t3} and \eqref{g1g2def}), as well as Lemma \ref{FBddprime}, we find that
\begin{equation}\label{g1g2}
\begin{cases}
F_{\mathcal{B}}(g_1)=t_1^{2m}f(t_2^{-1}t_1^2t_2,t_1^2)\,g_1^\lambda \,
f(t_1^2,t_2^{-1}t_1^2t_2)t_1^{-2m}\\
F_{\mathcal{B}}(g_2)=f(t_2^2,t_1^2)\,g_2^\lambda\, f(t_1^2,t_2^2)\\
F_{\mathcal{B}}(d)=d^\lambda.
\end{cases}
\end{equation}
Now, $a_1^2$ and $a_3^2$ commute with $t_1^2$, $t_2^{-1}t_1^2t_2$, $t_2^2$ and $d$, so
we can replace $t_1^2$ by $a_1^2a_3^2t_1^2$ in all the $f$'s of \eqref{g1g2}; furthermore
conjugating by $t_2$ exchanges $a_1^2$ and $a_3^2$, which implies that the product
$a_1^2a_3^2$ commutes with $g_1$,
so we can also conjugate the
first line of \eqref{g1g2} by $a_1^2a_3^2$ without changing its value. For the second line,
we further note that after changing $t_1^2$ to $a_1^2a_3^2t_1^2$ in the argument of the $f$'s,
we can also change $t_2^2$ to $a_3^2a_5^2t_2^2$ in the arguments of $f$, since
$a_3^2$ and $a_5^2$ both commute with $a_1^2a_3^2t_1^2$.  (Observe that $a_1^2a_3^2t_1^2$ is 
nothing other than the full twist $T_{1234}$, while $a_3^2a_5^2t_2^2$ is the full twist $T_{3456}$.) 
Doing all this, we rewrite \eqref{g1g2} as
\begin{equation}\label{g1g2bis}
\begin{cases}
F_{\mathcal{B}}(g_1)=a_1^2a_3^2t_1^{2m}f(t_2^{-1}a_1^2a_3^2t_1^2t_2,a_1^2a_3^2t_1^2)\,g_1^\lambda \,
f(t_1^2,t_2^{-1}a_1^2a_3^2t_1^2t_2)t_1^{-2m}a_1^{-2}a_3^{-2}\\
F_{\mathcal{B}}(g_2)=f(a_3^2a_5^2t_2^2,a_1^2a_3^2t_1^2)\,g_2^\lambda\, f(a_1^2a_3^2t_1^2,a_3^2a_5^2t_2^2).
\end{cases}
\end{equation}
Next, set
$$g'_1=t_2^{-1}d't_2, \quad g'_2=t_1^{-1}t_2^{-1}d't_2t_1.$$ 
Then $g_1g'_1=t_2^{-1}dd't_2$ and $g_2g'_2=t_1^{-1}t_2^{-1}dd't_2t_1$, so since 
$dd'=(a_1a_2a_3)^4$ in $\GGone$ by relation (B), these braids lie in $\B_6$, and direct braiding
allows us to easily check the following identities:
\begin{equation}\label{a1a3}
\begin{cases}
a_1^2a_3^2t_1^2=a_1^2a_3^2(a_2a_1a_3a_2)^2=(a_1a_2a_3)^4=w_4=dd'\\
t_2^{-1}a_1^2a_3^2t_1^2t_2=g_1g'_1\\       
a_3^2a_5^2t_2^2=g_2g'_2.  
\end{cases}
\end{equation}
Using these, we transform \eqref{g1g2bis} into
\begin{equation}\label{g1g2ter}
\begin{cases}
F_{\mathcal{B}}(g_1)=(dd')^m f(g_1g'_1,dd')\,g_1^\lambda\, f(dd',g_1g'_1)(dd')^{-m}\\
F_{\mathcal{B}}(g_2)=f(g_2g'_2,dd')\,g_2^\lambda\, f(dd',g_2g'_2). 
\end{cases}
\end{equation}
These expressions can be simplified by applying the Haiku Lemma.
Since the pair $\{d,g_1\}$ commutes with $\{d',g'_1\}$ and the pair $\{d,g_2\}$ 
commutes with the pair $\{d',g'_2\}$, we use (i) of the Haiku Lemma, and find that
$$F_{\mathcal{B}}(g_1)=(dd')^mf(g_1,d)f(g'_1,d')g_1^\lambda f(d',g'_1)f(d,g_1)(dd')^{-m} =d^m f(g_1,d)g_1^\lambda f(d,g_1)d^{-m}$$
and
$$F_{\mathcal{B}}(g_2)=f(g_2,d)f(g'_2,d')g_2^\lambda f(d',g'_2)f(d,g_2)=f(g_2,d)g_2^\lambda f(d,g_2).$$
These allow us to compute 
$F_{\mathcal{B}}(g_2)F_{\mathcal{B}}(g_1)F_{\mathcal{B}}(d)$ as follows:
\begin{align*}
F_{\mathcal{B}}(g_2)F_{\mathcal{B}}(g_1)F_{\mathcal{B}}(d)&=f(g_2,d)g_2^\lambda f(d,g_2)d^mf(g_1,d)g_1^\lambda
f(d,g_1)d^{-m}d^\lambda\\
&=f(g_2,d)g_2^\lambda g_2^{-m}f(g_1,g_2)(g_2g_1d)^m g_1^{-m}g_1^\lambda f(d,g_1)d^{m+1}
\ \ \hbox{by}\ \eqref{relIIversion4}\\
&=f(g_2,d)g_2^{m+1}f(g_1,g_2)g_1^{m+1}f(d,g_1)d^{m+1}(g_2g_1d)^m\\
&=(g_2g_1d)^{m+1}(g_2g_1d)^m\hfill\qquad \hbox{by}\ \eqref{relIIversion5}\\
&=(g_2g_1d)^\lambda.
\end{align*}
Since $g_2g_1d=a_1a_3a_5d_3$, this shows that $F_{\mathcal{B}}$ respects the lantern relation 
 in $\GGone$. In particular:
\begin{align*}
F_{\mathcal{B}}(d_3)&=F_{\mathcal{B}}(g_2)F_{\mathcal{B}}(g_1)F_{\mathcal{B}}(d)F_{\mathcal{B}}(a_1^{-1})F_{\mathcal{B}}(a_3^{-1})
F_{\mathcal{B}}(a_5^{-1})\\
&=(g_2g_1d)^\lambda a_1^{-\lambda}a_3^{-\lambda}a_5^{-\lambda}\\
&=d_3^\lambda.
\end{align*}
To conclude, we recall from \eqref{newFaction} that 
$$F_{\mathcal{B}}={\rm inn}\Bigl(f(y_3,a_3^2)f(y_5,a_5^2)f(y_7,a_7^2)\Bigr)\circ F.$$
Since $d_3$ commutes with $y_3,y_5,y_7,a_3,a_5$ and $a_7$, we find that $F(d_3)=d_3^\lambda$.

\subsection{Proof of relation (C)}

We can now complete the proof that $F$ respects relation (C).
Recall that the homomorphism $\iota_4:\B_8\rightarrow \GGone$ maps $\sigma_i\mapsto a_i$
for $i=1,\ldots,6$ and $\sigma_7\mapsto d_3$. By Lemmas \ref{technical} and \ref{iotas}, 
the map $\iota_4$ factors through the quotient $\B_8\rightarrow \B'_8$ of $\B_8$ by the
subgroup $\langle w_7^{-1}\eta_8\rangle$, and writing $\iota'_4$ for the homomorphism
$\B'_8\rightarrow \GGone$, we have $\iota'_4\circ F=F\circ \iota'_4$ on $\sigma_1,\ldots,\sigma_6$.
Since the same Lemmas show that the action of $F$ on $\B_8$ passes to an action of
$F$ on $\B'_8$ such that $F(\sigma_7)=\sigma_7^\lambda$ in $\B'_8$, and since we just saw
in the previous subsection that $F(d_3)=d_3^\lambda$, we find that $\iota'_4\circ F$ coincides
with $F\circ \iota'_4$ on all of $\B'_8$. Thus $F$ is an automorphism of the image of
$\B'_8$ in $\GGone$ and therefore $F$ respects the relation
$$\iota'_4(\sigma_6) \iota'_4(\sigma_7) \iota'_4(\sigma_6)
=\iota'_4(\sigma_7) \iota'_4(\sigma_6) \iota'_4(\sigma_7)$$
in $\GGone$, which is precisely the relation $a_6d_3a_6=d_3a_6d_3$.

This proves that the proposed action of $F$ on $\GGone$ respects relation (C). Since we also
have relations (A) and (B), proved in the previous sections, and since relation (C') only
concerns the case $g\ge 4$, we have proved

\vspace{.3cm}
\begin{cor}\label{genus3} The action of $F\in\GT$ proposed in \eqref{Faction}
extends to an automorphism of $\widehat\Gamma_{3,1}$.
\end{cor}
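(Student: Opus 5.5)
The statement is the genus $3$ case, and the proof is essentially the bookkeeping that assembles the previous results against Wajnryb's presentation. By Theorem \ref{wajnryb}, for $g=3$ the group $\widehat\Gamma_{3,1}$ is topologically presented by the generators $d,a_1,\ldots,a_6$ subject to relations (A), (B) and (C) only, since (C$'$) is needed only for $g\ge 4$. I will define a continuous endomorphism $F$ of the free profinite group on these generators by the formulas \eqref{Faction}. It then suffices to check that $F$ maps each defining relator into the normal closed subgroup generated by the relators, and afterwards to show that the induced endomorphism of $\widehat\Gamma_{3,1}$ is bijective.

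For the relations, I would cite the earlier results family by family. The braid and commutation relations among the $a_i$ are Proposition \ref{braidrels}. The braid relation $da_4d=a_4da_4$ is Corollary \ref{Fiotacommute}, which is applied via $\iota_3$ and also holds for $g=3$. The commutation of $d$ with $a_j$ for $j\ne 4$ and relation (B) are Corollary \ref{relationB}. Relation (C) is the content of the previous subsection: we first showed $F(d_3)=d_3^\lambda$ through the lantern computation for $F_{\mathcal{B}}$. We then used that $\iota'_4:\B'_8\to\GGone$ intertwines $F$ on $\B'_8$, obtained from Lemmas \ref{technical} and \ref{iotas}, with the proposed $F$. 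Hence the image of $\sigma_6\sigma_7\sigma_6=\sigma_7\sigma_6\sigma_7$ gives $F(a_6)F(d_3)F(a_6)=F(d_3)F(a_6)F(d_3)$. One point needs care here. The expression for $F(d_3)$ uses the defining formula \eqref{wajnrybdefs}, so its value is the one computed from $F(d),F(t_1),F(t_2),F(a_i)$. This is consistent with applying the homomorphism $F$ to the word defining $d_3$.

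The remaining step, and the one I expect to be the main obstacle in principle, is that the resulting continuous endomorphism is an automorphism. I would use the group structure of $\GT$. For $F_1,F_2\in\GT$, the composite of the endomorphisms induced by $F_1$ and $F_2$ should agree with the endomorphism induced by their product $F_1F_2$. The reason is that on the generators this identity is exactly the one that holds for Drinfeld's action on $\B_n$ through $\iota$ and $\iota_3$. Concretely, it holds on the $a_i$ via $\iota$, and on $d$ because $d\mapsto d^\lambda$ is multiplicative in $\lambda$. Applying this to $F$ and its inverse $F^{-1}\in\GT$, the composite is the identity on generators, and hence $F$ is invertible. As an alternative, since $\widehat\Gamma_{3,1}$ is topologically finitely generated, it suffices to show surjectivity. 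Surjectivity follows because the image contains $a_1$ and $d$ (taking $\lambda$-th roots is possible as $\lambda\in\widehat{\mathbb Z}^\times$), and then inductively contains each $a_i$, since $F(a_i)$ is conjugate to $a_i^\lambda$ by an element of the closed subgroup generated by $a_1,\ldots,a_{i-1}$. A surjective endomorphism of a topologically finitely generated profinite group is an automorphism.
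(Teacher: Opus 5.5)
Your proposal is correct, and for the relations it is exactly the paper's argument. (A) and (B) come from Proposition~\ref{braidrels} and Corollaries~\ref{Fiotacommute} and~\ref{relationB}, (C) comes from $F(d_3)=d_3^\lambda$ together with the intertwining map $\iota'_4$, and (C$'$) is not needed for $g=3$. Your first invertibility argument is valid and makes explicit a point the paper leaves tacit: you show compatibility with the group law of $\GT$ (on the $a_i$ via $\iota$, on $d$ directly), then compose with the endomorphism attached to $F^{-1}$.

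Your alternative surjectivity argument does not work as written. The conjugator $f(a_i^2,y_i)$ involves $a_i^2$, so it does not lie in the closed subgroup generated by $a_1,\ldots,a_{i-1}$, and the inductive step fails. It is repaired by observing that the image contains $F(\iota(\B_7))=\iota(F(\B_7))=\iota(\B_7)\ni a_1,\ldots,a_6$, because Drinfeld's $F$ is an automorphism of $\B_7$.
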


\section{Relation (C$'$) and the general case $g\ge 4$}
In this section we return to the notation $d$ instead of $d_2$, as the other $d_i$ are not used here.
Recall that $d'=y_5dy_5^{-1}$, and
that relation (C$'$) says that $d'$ commutes with $(t_2t_1t_3t_2)d(t_2t_1t_3t_2)^{-1}$. 
We will actually use the automorphism
$$F_{\mathcal{B}}={\rm inn}\bigl(f(y_3,a_3^2)f(y_5,a_5^2)f(y_7,a_7^2)\bigr)\circ F$$ 
of $\B_8$ introduced in the previous section, which satisfies
$F_{\mathcal{B}}(a_i)=a_i^\lambda$ for $i=1,3,5,7$, and show that $F_{\mathcal{B}}$ respects
relation (C$'$), which automatically implies the same result for $F$ since they differ
by an inner automorphism.

Recall the maximal bracketing
$${\mathcal{B}''}=((x_1,x_2),(x_3,x_4)),((x_5,x_6),(x_7,x_8))$$
from \eqref{threebracketings}. As noted there, ${\mathcal{B}''}$ differs from
${\mathcal{B}}$ by the single A-move changing the bracket surrounding $1,\ldots,6$ to the
bracket surrounding $5,6,7,8$, so we have
\begin{equation}\label{BB}
F_{\mathcal{B}''}={\rm inn}\,f(T_{123456},T_{5678})\circ F_{\mathcal{B}}.
\end{equation}
In $\B_8$, we have
$$T_{123456}=\omega_6\ \ {\rm and}\ \ T_{5678}=\sigma_5^2\sigma_7^2\tau_3^2.$$
Thus under the homomorphism $\iota:\B_8\rightarrow \GGone$, 
$T_{123456}=\omega_6\mapsto w_6$ and $T_{5678}=\sigma_5^2\sigma_7^2\tau_3^2\mapsto 
a_5^2a_7^2t_3^2$. Noting that $a_5$ and $a_7$ both commute with $t_3^2$, we find that
$$f(T_{123456},T_{5678})=f(\omega_6,\sigma_5^2\sigma_7^2\tau_3^2)\mapsto
f(w_6,a_5^2a_7^2t_3^2)=f(w_6,t_3^2).$$
Thus \eqref{BB} translates in $\GGone$ to
\begin{equation}\label{BB2}
F_{\mathcal{B}''}={\rm inn}\,f(w_6,t_3^2)\circ F_{\mathcal{B}}.
\end{equation}
But since by Theorem \ref{Amovetheorem} we have
\begin{equation*}
\begin{cases}
F_{\mathcal{B}''}(T_{1234})=T_{1234}^\lambda\\
F_{\mathcal{B}''}(T_{5678})=T_{5678}^\lambda\\
F_{\mathcal{B}''}(T_{12345678})=T_{12345678}^\lambda,
\end{cases}
\end{equation*}
we see that 
$$F_{\mathcal{B}''}(t_2t_1t_3t_2)=(t_2t_1t_3t_2)^\lambda.$$
Thus by \eqref{BB2}, we have
\begin{equation}\label{BB3}
F_{\mathcal{B}}(t_2t_1t_3t_2)=f(t_3^2,w_6)(t_2t_1t_3t_2)^\lambda f(w_6,t_3^2).
\end{equation}
We use this to compute $F_{\mathcal{B}}\bigl((t_2t_1t_3t_2)^{-1}d(t_2t_1t_3t_2)\bigr)$ and show 
it commutes with $F_{\mathcal{B}}(d')={d'}^\lambda$.  
By \eqref{BB3}, 
$F_{\mathcal{B}}\bigl((t_2t_1t_3t_2)^{-1}d(t_2t_1t_3t_2)\bigr)$ 
is equal to
\begin{equation}\label{BB4}
f(t_3^2,w_6)(t_2t_1t_3t_2)^\lambda f(w_6,t_3^2)\,d^\lambda\,f(t_3^2,w_6)(t_2t_1t_3t_2)^\lambda f(w_6,t_3^2).
\end{equation}
Now, both $d$ and $d'$ commute with $t_3=a_6a_5a_7a_6$ since they commute with $a_j$ for all $j\ne 4$. 
Furthermore, $d$ and $d'$ also commute with $w_6$. To see
this, we write $w_6=y_6y_5y_4y_3y_2$. By Lemma \ref{useful0}, we have
\begin{equation*}
\begin{cases}
w_6\,d\,w_6^{-1}=y_6y_5\,d\,y_5^{-1}y_6^{-1}=y_6\,d'\,y_6^{-1}=d\\
w_6\,d'\,w_6^{-1}=y_6y_5\,d'\,y_5^{-1}y_6^{-1}=y_6\,d\,y_6^{-1}=d'.
\end{cases}
\end{equation*}
This shows that both $d$ and $d'$ commute with $f(t_3^2,w_6)$. The commutation 
of this term with $d$ simplifies \eqref{BB4} to
\begin{equation}\label{FrhsCprime}
F_{\mathcal{B}}\bigl((t_2t_1t_3t_2)^{-1}d(t_2t_1t_3t_2)\bigr)=
f(t_3^2,w_6)(t_2t_1t_3t_2)^{-1}d(t_2t_1t_3t_2)f(w_6,t_3^2).
\end{equation}
To show that $F$ respects relation (C$'$), we must show that $F(d')={d'}^\lambda$ commutes with the
right-hand side of \eqref{FrhsCprime}. But in fact $d'$ already commutes with the right-hand
side of \eqref{FrhsCprime}; indeed we just saw that it commutes with the outer term
$f(w_6,t_3^2)$, and it commutes with the inner term
$(t_2t_1t_3t_2)d(t_2t_1t_3t_2)^{-1}$ since this is precisely what relation (C$'$) says. 
This concludes the proof that $F$ respects relation (C$'$), and thus completes the 
proof of the main Theorem \ref{thm11}.
\hfill{$\square$}

\section{The case of $\GGzero$}

Recall that we have a natural 
exact sequence
$$
1\to \hat\pi_{g,0} \to \GGone \to \GGzero\to 1
$$
where $\hat\pi_{g,0}$ is the profinite completion of 
the fundamental group of a closed surface of 
genus $g$. 
The goal of this section is to prove the following 

\begin{thm} 
\label{GGzero}
The $\GT$-action on $\GGone$ in
Theorem \ref{thm11} induces an automorphism group of 
$\GGzero$.
\end{thm}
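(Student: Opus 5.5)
To prove Theorem \ref{GGzero}, I will show that the automorphism $F$ of $\GGone$ given by \eqref{Faction} preserves the kernel of $\GGone\to\GGzero$. By Theorem \ref{wajnryb-ii}, this kernel is the closed normal subgroup generated by the single additional relator (D). Since $\GGzero$ is presented by the same generators $d,a_1,\ldots,a_{2g}$ with relations (A), (B), (C), (C$'$) plus (D), it suffices to check that the images under $F$ of the generators satisfy (D) in $\GGzero$. Equivalently, $F$ applied to the relator (D) lies in the normal closure of (D). Applying the same argument to $F^{-1}=(\lambda^{-1},f')$ then shows that the induced endomorphism of $\GGzero$ is an automorphism.

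The Wajnryb relation (D) for the closed surface says that the twist along the loop $\omega_{g}$ surrounding the whole surface (equivalently, the boundary/marked point twist) becomes trivial, or in the form Wajnryb writes it, that a certain element $[a_{2g}\cdots a_1a_1\cdots a_{2g},\,d_g]$ vanishes. Concretely, it expresses that $y_{2g+1}=a_{2g}\cdots a_1^2\cdots a_{2g}$ commutes with the top twist $d_g$, i.e.\ that the point-pushing around the marked point is trivial. I would use the form in which (D) reads as an equality between products of the elements $y_k$, $w_k$ and twists $d_g$, $d'_g$, because these are exactly the elements on which the action of $F$ is already known to be a pure $\lambda$-power.

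The key steps are as follows. First, by Proposition \ref{braidrels}, $F(y_k)=y_k^\lambda$ and $F(w_k)=w_k^\lambda$ for all $k\le 2g+1$. Second, I will show $F(d_k)=d_k^\lambda$ for all $k\le g$, inductively extending the computation of \S\ref{CompFd3}. The lantern expressing $d_{k+1}$ through $d_k$, $t_k$, $t'_{k-1}$ (Definition \ref{loops}) is handled exactly as for $d_3$. One first replaces $F$ by the bracketing automorphism $F_{\mathcal{B}}$ adapted to packets $(x_{2k-1},x_{2k})$, as in Theorem \ref{doublestrandsubgroup} generalized to $2k+2$ strands, and then applies the Haiku Lemma and relations \eqref{relIIversion4}--\eqref{relIIversion5}. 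Alternatively, I would use $\iota'_{k+1}$ from Lemma \ref{iotas} to identify $d_k$ with the last braid generator in $\B'_{2k+2}$, where Lemma \ref{technical} gives $F(\sigma_{2k+1})=\sigma_{2k+1}^\lambda$.

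Third, I would rewrite (D) as the statement that $\iota'_{g+1}(\eta_{2g+2})$, or the corresponding central-type element of $\B'_{2g+2}$, is trivial in $\GGzero$. Then $F$ of the relator is its $\lambda$-th power up to conjugation, and the $\lambda$-th power of a trivial element is trivial.

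The main obstacle is the second step in general genus: transporting $F(d_k)=d_k^\lambda$ from $k=3$ to all $k$. Here one must verify that the conjugating terms $f(y_j,a_j^2)$ relating $F$ and $F_{\mathcal{B}}$ commute with $d_k$. I would try to prove this by the anticommutation trick of Lemma \ref{FBddprime}, using Lemma \ref{useful0} with $d,d'$ replaced by $d_k,d'_k$, so that $y_j^2$ commutes with $d_k$ and $y_j$ swaps $d_k$ and $d'_k$. Once this is in place, the relator (D) maps to a $\lambda$-power of itself up to conjugation, and the theorem follows.
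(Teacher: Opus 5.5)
Your overall skeleton matches the paper's. Rewrite (D) as the commutation of $y_{2g+1}$ with $d_g$, and combine $F(y_{2g+1})=y_{2g+1}^\lambda$ (Proposition~\ref{braidrels}) with $F(d_g)=d_g^\lambda$ (Proposition~\ref{F(di)=di}). Then $F$ sends the relator to $[y_{2g+1}^\lambda,d_g^\lambda]$, which is trivial in $\GGzero$ because profinite powers of commuting elements commute. It is not, as your last sentence says, a $\lambda$-th power of the relator up to conjugation.

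Your third step replaces this argument by a reformulation that is false. By Lemma~\ref{iotas} with $k=g+1$, $\iota'_{g+1}(\eta_{2g+2})=d_gy_{2g+1}d_g=w_{2g}y_{2g+1}=w_{2g+1}$. In the closed surface this is the palindromic product of the twists along the maximal chain $\alpha_1,\ldots,\alpha_{2g},\delta_g$, i.e.\ the hyperelliptic involution, so it is not trivial in $\GGzero$. The central element does not help either: in $\B'_{2g+2}$ one has $\omega_{2g+2}=\eta_{2g+2}^2$. Its image $(a_1\cdots a_{2g})^{4g+2}$ is the twist along the curve cutting off the marked point (or boundary component), and its triviality has nothing to do with (D). This is the same confusion as in your description of (D) as ``the boundary/marked point twist becomes trivial''. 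Step 3 should simply be the commutation argument you already state in your second paragraph.

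The more serious gap is in step 2, which is where essentially all the work lies (Proposition~\ref{F(di)=di}).

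\textbf{The alternative route is circular.} $\iota'_{k+1}$ intertwines the Drinfeld action on $\B'_{2k+2}$ with $F$ only if one already knows $F(d_k)=d_k^\lambda$. That is exactly how it is used in Corollary~\ref{Fiotacommute}, where $F(d)=d^\lambda$ holds by definition, and in \S 5.4, where $F(d_3)=d_3^\lambda$ was computed beforehand.

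\textbf{The lantern route does not go through ``exactly as for $d_3$''.} For $i\ge 3$ the third lantern curve is $\gamma_{2,i}=t_{i-1}^{\prime -1}t_i^{-1}(\delta_i)$, with $t'_{i-1}=a_{2i-2}d_{i-1}a_{2i-1}a_{2i-2}$. This involves $d_{i-1}$ and lies in no braid group carrying a bracketing automorphism, so Theorem~\ref{Amovetheorem} gives no formula for its image. In the step from $d_2$ to $d_3$ this was hidden by the coincidence $t'_1=t_1$. The paper gets around it in Lemma~\ref{GTaction_g1-g2}, using:
\begin{itemize}
\item Wajnryb's expression $g_{2,i}=(t_2\cdots t_it_1\cdots t_{i-1})^{-1}d_2(t_2\cdots t_it_1\cdots t_{i-1})$ and the recursion $g_{2,i+1}=t_i^{-1}t_{i+1}^{-1}g_{2,i}t_{i+1}t_i$;
\item the ribbon-braid formula $\FB(t_i)=f(t_i^2,W_i)t_i^\lambda f(W_i,t_i^2)$;
\item the identities $d_id'_i=W_ia_1^2\cdots a_{2i-1}^2$, $g_{1,i}g'_{1,i}=t_i^{-1}w_{2i}t_i$ and $g_{2,i}g'_{2,i}=t_i^2a_{2i-1}^2a_{2i+1}^2$, which are what let the Haiku Lemma strip off the primed factors;
\item an induction on $i$ using the pentagon relation (III).
\end{itemize}
After that lemma, the relation (II) computation on the lantern is the same as in \S 5.3.

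\textbf{Your stated main obstacle is routine.} The induction can be run entirely with $\FB$; its base case $\FB(d)=d^\lambda$ is Lemma~\ref{FBddprime}, which is precisely your anticommutation trick. Since $F$ and $\FB$ differ by an inner automorphism, they preserve the same normal subgroups. In any case, the conjugating factors $f(y_{2j+1},a_{2j+1}^2)$ with $2j+1\le 2g-1$ commute with $d_g$ outright.
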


To prove this, we must show that the action of $\GT$ on $\GGone$ 
passes to the group $\GGzero$ which according to Wajnryb \cite{W} 
is obtained from $\GGone$ by 
quotienting by a single further relation, presented in the following theorem.

\begin{thm}[Wajnryb \cite{W}] 
\label{wajnryb-ii}
To give a presentation of $\widehat\Gamma_{g,0}$, it suffices to add a single
relation (D) to the presentation of $\GGone$ in Theorem \ref{wajnryb} as follows. 
Let 
\begin{align*}t_i&=a_{2i}a_{2i-1}a_{2i+1}a_{2i}\hbox{ for }i=1,\ldots,g-1\\
v_1&=d'\hbox{ and }v_i=(t_{i-1}t_i)v_{i-1}(t_{i-1}t_i)^{-1}\hbox{ for }i=2,\ldots,g-1\\
u_i&=a_{2i}a_{2i+1}a_{2i+2}v_i(a_{2i+2}a_{2i+1}a_{2i}a_{2i-1})^{-1}\hbox{ for }i=1,\ldots,g-1,
\end{align*}
where $d'=d_2'$ is defined in \eqref{wajnrybdefs} of Theorem 
\ref{wajnryb}.
Let 
$$d_g=(u_1\cdots u_{g-1})^{-1}\cdot a_1\cdot (u_1\cdots u_{g-1})$$
and
$$y_g=a_{2g}a_{2g-1}\cdots a_2a_1^2a_2\cdots a_{2g-1}a_{2g}.$$
Finally, let $d'_g=y_{2g+1}d_gy_{2g+1}^{-1}$. Then

\vspace{.1cm}\noindent
(D) \ \ \ \ $d'_g=d_g$.
\end{thm}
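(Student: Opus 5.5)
The plan is to deduce the presentation of $\GGzero$ from the presentation of $\GGone$ in Theorem \ref{wajnryb}. I would first prove the discrete statement for $\Gamma_{g,0}$ via the Birman exact sequence, and then pass to profinite completions. Since $\Gamma_{g,1}$ is given by the generators and relations (A), (B), (C), (C$'$), it suffices to show two things: that the kernel of the forgetful map $\Gamma_{g,1}\to\Gamma_{g,0}$ is the normal closure of the single element $d_gd_g'^{-1}$, and that the words in (D) really represent $d_g$ and $d'_g$.

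\textbf{Step 1: identify the curves.} I would check, by tracking curves as in Remark 3.2.1, that $(u_1\cdots u_{g-1})^{-1}(\alpha_1)$ is the loop $\delta_g$ of Figure 2. Here $v_i$ is the twist along the translate of $\delta'_2$ moved to the $(i+1)$-st genus hole by the flat crossings $t_{i-1}t_i$, and each $u_i$ carries the curve at one hole to the next one. I would also check that $y_{2g+1}(\delta_g)=\delta'_g$; this is the last instance of the argument of Lemma \ref{useful0}(ii), using that $y_{2g+1}$ acts as the hyperelliptic-type half rotation. This step is in principle routine, but I expect it to be the main obstacle. One must commit to Wajnryb's convention and then translate, using the convention remark after Theorem \ref{wajnryb}, so that the words $u_i$, $v_i$ read as written. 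I would verify the $g=2$ case by drawing and then do an induction on $i$.

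\textbf{Step 2: compute the kernel.} The loops $\delta_g$ and $\delta'_g$ are disjoint. They cobound an annulus in $\Sigma_g$ containing the marked point, and in the closed surface they become isotopic. Hence $d_gd_g'^{-1}$ is the point-pushing map along a simple nonseparating loop $\gamma$ through the marked point. Conjugating by $\Gamma_{g,1}$ yields the pushes along every simple nonseparating based loop, since $\Gamma_{g,1}$ acts transitively on these. Such loops generate $\pi_1(\Sigma_g)$, which is the kernel of the Birman exact sequence $1\to\pi_{g,0}\to\Gamma_{g,1}\to\Gamma_{g,0}\to1$. Therefore the normal closure of $d_g d_g'^{-1}$ is the whole kernel, and adding (D) to the presentation of $\Gamma_{g,1}$ gives a presentation of $\Gamma_{g,0}$.

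\textbf{Step 3: profinite completion.} Profinite completion is right exact: if $N\lhd G$ is the normal closure of a finite set $S$, then $\widehat{G/N}=\widehat G/\overline{\langle\langle S\rangle\rangle}$. Applying this to $G=\Gamma_{g,1}$ and $S=\{d_gd_g'^{-1}\}$, the topological presentation of $\GGone$ with (D) added is a topological presentation of $\GGzero$. This is exactly the statement, and it is the form needed to check that the $\GT$-action respects (D).
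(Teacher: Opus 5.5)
Your proposal is correct, but it does not follow the paper's route: the paper gives no argument for this statement beyond citing Wajnryb, plus the pictorial remark that capping the last hole identifies $\delta_g$ with $\delta'_g$.

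Your Step~1 is the counterpart of that remark. Like the paper, you leave as a check the identification of $(u_1\cdots u_{g-1})^{-1}(\alpha_1)$ with $\delta_g$. For $g=2$ the check is quick: repeatedly using $a_{j+1}^{-1}(\alpha_j)=a_j(\alpha_{j+1})$ and the analogous identity for $d'$, one gets $u_1^{-1}(\alpha_1)=a_4a_3a_2a_1\,{d'}^{-1}\bigl(a_1a_2a_3(\alpha_4)\bigr)=y_5(\delta')=\delta$.

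What you add is twofold. First, a conceptual reason why a single relation suffices: one push along a nonseparating simple loop normally generates the point-pushing subgroup, because $\Gamma_{g,1}$ acts transitively on such loops. Second, an explicit justification, via right exactness of profinite completion, of the passage to $\GGone$ and $\GGzero$, which the paper uses tacitly. The citation, for its part, certifies Wajnryb's specific words and covers his actual setting.

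On that last point there is a caveat. Your Step~2 uses that $\Gamma_{g,1}$ is the once-punctured group, as the paper asserts. However, Wajnryb's relations originally present the mapping class group of a surface with one boundary component. In that reading the capping kernel is $\pi_1(UT\Sigma_g)$, a central extension of $\pi_1(\Sigma_g)$ by $\langle T_\partial\rangle$, so you must also show that $T_\partial$ lies in the normal closure of $d_g{d'_g}^{-1}$. One lantern does this. Take a four-holed sphere with boundary curves $\partial,b_1,b_2,b_3$, where the $b_j$ are nonseparating with connected complement, and interior curves $x,y,z$ such that $x$ (resp.\ $y$, $z$) cobounds a pair of pants with $\partial$ and $b_1$ (resp.\ $b_2$, $b_3$). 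By change of coordinates, each of $T_xT_{b_1}^{-1}$, $T_yT_{b_2}^{-1}$, $T_zT_{b_3}^{-1}$ is conjugate to $d_g{d'_g}^{-1}$. Since the $T_{b_j}$ commute with $T_x,T_y,T_z$, the lantern relation makes their product (in the lantern order) equal to $T_\partial$.
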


\vspace{.1cm}
The relation (D) can be understood pictorially as what happens when there
is only a single hole on the right in Figure~2, so that the surface is of
type $(g,1)$, and we cap that hole to create a surface of type $(g,0)$. 
It is obvious that the two loops $\delta_g$ and $\delta'_g$ are identified
in that case, and Wajnryb's theorem tells us that no further relations are
needed in the presentation of $\GGzero$.

\vspace{.1cm}
As explained in \cite[Sect. 4]{LNS}, to prove the theorem, it suffices to show
the following.

\begin{prop}[Proposition 4.1 of \cite{LNS}]
\label{F(di)=di}
Let $F\in\GT$, and for $i=2,\ldots,g$ let $d_i$ denote the Dehn twist along the loop $\delta_i$ shown in
Figure 2. Then the automorphism $F$ of $\GGone$ given in \eqref{Faction} satisfies $F(d_i)=d_i^\lambda$.
\end{prop}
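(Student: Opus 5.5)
We argue by induction on $i$. The cases $i=2$ and $i=3$ are already settled: $F(d_2)=F(d)=d^\lambda$ holds by definition \eqref{Faction}, and $F(d_3)=d_3^\lambda$ was established in \S\ref{CompFd3}. For the inductive step we imitate the computation of $F(d_3)$ one genus hole further along the surface. The key input is the lantern relation of Definition \ref{loops}: the twists $d_i$, $t_i^{-1}d_it_i$ and $t_{i-1}^{\prime-1}t_i^{-1}d_it_it'_{i-1}$ multiply to $d_{i-1}a_{2i-1}a_{2i+1}d_{i+1}$. This expresses $d_{i+1}$ in terms of $d_i$, $d_{i-1}$, the $a_j$ and the braids $t_i$, $t'_{i-1}=a_{2i-2}d_{i-1}a_{2i-1}a_{2i-2}$.

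\textbf{Step 1: a bracketing adapted to the $i$-th hole.} By the induction hypothesis $F(d_{i-1})=d_{i-1}^\lambda$. Lemmas \ref{technical} and \ref{iotas} then show that $\iota'_i:\B'_{2i}\to\GGone$, which sends $\sigma_{2i-1}\mapsto d_{i-1}$, intertwines $F$. The same holds for $\iota'_{i+1}$, sending $\sigma_{2i+1}\mapsto d_i$.

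Next consider the copy of $\B_6$ (or of $\B'_6$) generated by the strands adjacent to hole $i$, namely with generators $a_{2i-2},d_{i-1},a_{2i-1},a_{2i},a_{2i+1}$ mapped in place of $\sigma_1,\dots,\sigma_5$. This is the local analogue of the configuration used for $g_1,g_2$, with $a_1$ replaced by $d_{i-1}$ and the strands shifted.

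We choose a maximal bracketing $\mathcal{B}$ in which $(x_{2i-3},x_{2i-2})$, $(x_{2i-1},x_{2i})$ and $(x_{2i+1},x_{2i+2})$ are bracketed pairs, as in Theorem \ref{doublestrandsubgroup}. The associated automorphism $F_{\mathcal{B}}$ then fixes the relevant half-twists to the power $\lambda$. By Theorem \ref{Amovetheorem}(iii), $F_{\mathcal{B}}$ differs from $F$ by an explicit inner automorphism, a product of terms $f(y_j,a_j^2)$ or their $d_{i-1}$-analogues.

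\textbf{Step 2: the lantern computation.} We repeat the argument of \S\ref{CompFd3} with $x=d_i$, $y=g_2^{(i)}$, $z=g_1^{(i)}$. The boundary twists are $d_{i-1},a_{2i-1},a_{2i+1},d_{i+1}$, and all of them commute with $x,y,z$. We proceed as follows:

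1. Compute $F_{\mathcal{B}}(g_1^{(i)})$ and $F_{\mathcal{B}}(g_2^{(i)})$ via the analogue of \eqref{g1g2}.
2. Enlarge the arguments of $f$ to full twists, which become $d_id'_i$ and $g_k^{(i)}g_k^{\prime(i)}$. This uses the analogue of relation (B), namely $d_id'_i=w$ of the corresponding subsurface, which is the content of Lemma \ref{iotas}.
3. Split the $f$'s with the Haiku Lemma (i).
4. Combine \eqref{relIIversion4} and \eqref{relIIversion5} to obtain $F_{\mathcal{B}}(g_2g_1d_i)=(g_2g_1d_i)^\lambda$.

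Since $F_{\mathcal{B}}$ sends each of $d_{i-1},a_{2i-1},a_{2i+1}$ to its $\lambda$-th power, this yields $F_{\mathcal{B}}(d_{i+1})=d_{i+1}^\lambda$.

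\textbf{Step 3: return to $F$.} It remains to check that $d_{i+1}$ commutes with the conjugating element relating $F_{\mathcal{B}}$ and $F$. This should follow from disjointness of $\delta_{i+1}$ from the relevant loops, together with the arguments of Lemma \ref{useful0} and Lemma \ref{FBddprime}. In those lemmas, $y_j^2$ commutes with $d_{i+1}$, and the anti-commutation trick with Haiku (iii) kills the terms $f(a_j^2,y_j)$ for $j$ large.

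\textbf{Main obstacle.} The hardest step is setting up $F_{\mathcal{B}}$ correctly for general $i$. When $i>2$, $t'_{i-1}\neq t_{i-1}$ because it involves $d_{i-1}$, so the relevant braid group embeds through $\iota'_i$ rather than $\iota$. We must therefore verify that the A-move formulas of Theorem \ref{Amovetheorem} survive passage to the quotient $\B'_{2i}$, and that the inner-automorphism terms (now involving $d_{i-1}$) still commute with $d_{i+1}$ and with the boundary twists.

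I would first try to reduce to the case $i=2$ by transport of structure. The idea is to find an element $h\in\GGone$ with $F(h)=h^\lambda$, for instance a product of $\lambda$-equivariant half-twists such as $t_{i-1}t_i\cdots$ as used in defining $v_i$ in Theorem \ref{wajnryb-ii}, that conjugates $d_2$ to $d_i$. Then $F(d_i)=F(h)\,d_2^\lambda\,F(h)^{-1}$, and we would only need to control $F(h)$ up to elements commuting with $d_i$. I would use this route if the direct lantern induction becomes unwieldy.
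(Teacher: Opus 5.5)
Your outer frame is the paper's. You induct through the lantern relation \eqref{lantern_i} on the four-holed sphere bounded by $\delta_{i-1},\alpha_{2i-1},\alpha_{2i+1},\delta_{i+1}$, work with the doubled-strand automorphism $\FB$, close with relation (II) exactly as in \S\ref{CompFd3}, and return to $F$ by the anti-commutation/Haiku (iii) argument of Lemma \ref{FBddprime}; your Step 3 is fine.

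The gap is in Step 1 (and item 1 of Step 2), which is where the real work lies: you never obtain a usable formula for $\FB$ on the third lantern twist $g_{2,i}$. Writing it as $t_{i-1}^{\prime-1}t_i^{-1}d_it_it'_{i-1}$ with $t'_{i-1}=a_{2i-2}d_{i-1}a_{2i-1}a_{2i-2}$ forces you to know that $F$ acts, up to an inner automorphism, by Drinfeld/bracketing formulas on the braid group of the chain $d_{i-1},a_{2i-2},a_{2i-1},a_{2i},a_{2i+1}$. (Your ordering $a_{2i-2},d_{i-1},a_{2i-1},\dots$ is not a chain: $\delta_{i-1}$ meets $\alpha_{2i-2}$ and misses $\alpha_{2i-1}$.) Nothing in the paper supplies this. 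Lemmas \ref{technical} and \ref{iotas} only treat $\iota'_k$, where $d_{k-1}$ sits at the \emph{end} of the chain $a_1,\dots,a_{2k-2},d_{k-1}$. The word $t'_{i-1}$ uses $d_{i-1}$ and $a_{2i-1}$, which never occur together among the generators of a single $\iota'_k$. Concretely, $F(a_{2i-2})=f(a_{2i-2}^2,y_{2i-2})\,a_{2i-2}^\lambda\,f(y_{2i-2},a_{2i-2}^2)$ with $y_{2i-2}$ built from $a_1,\dots,a_{2i-3}$, whereas a Drinfeld action on the new chain would involve $f(a_{2i-2}^2,d_{i-1}^2)$. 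Reconciling these is a Lego-type compatibility of exactly the kind the introduction says is not known for all of $\GT$, not a routine check.

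Your fallback does not rescue this. Any $h$ in the subgroup generated by the $a_j$ (e.g.\ a product of $t_j$'s) commutes with the hyperelliptic involution. So $hd_2h^{-1}=d_i$ would force $h(\delta_2\cup\delta'_2)=\delta_i\cup\delta'_i$, which is impossible for $i\ge 3$ because these pairs bound subsurfaces of genus $1$ and $i-1$ away from the marked point. For $h$ outside that subgroup you have no control over $F(h)$.

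The missing idea is to apply transport of structure to $\gamma_{2,i}$ rather than to $\delta_i$, via Wajnryb's expression
\[
g_{2,i}=(t_2\cdots t_it_1\cdots t_{i-1})^{-1}\,d_2\,(t_2\cdots t_it_1\cdots t_{i-1}),
\]
which involves neither $d_{i-1}$ nor $t'_{i-1}$. The inputs are $\FB(d_2)=d_2^\lambda$ (Lemma \ref{FBddprime}) and $\FB(t_j)=f(t_j^2,W_j)\,t_j^\lambda\,f(W_j,t_j^2)$ with $W_j=(t_1\cdots t_{j-1})^j$. From these the paper computes $\FB(g_{2,i})=f(g_{2,i}g'_{2,i},d_id'_i)\,g_{2,i}^\lambda\,f(d_id'_i,g_{2,i}g'_{2,i})$ by induction on $i$, through $g_{2,i+1}=t_i^{-1}t_{i+1}^{-1}g_{2,i}t_{i+1}t_i$ and the pentagon relation (III). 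It gets $\FB(g_{1,i})$ from $\FB(t_i)$ and the hypothesis $\FB(d_i)=d_i^\lambda$ (Lemma \ref{GTaction_g1-g2}). The identities $d_id'_i=w_{2i}=W_ia_1^2\cdots a_{2i-1}^2$ and $g_{2,i}g'_{2,i}=t_i^2a_{2i-1}^2a_{2i+1}^2$ of Proposition \ref{AppBprop} keep every argument of $f$ inside the braid image. The Haiku Lemma then gives \eqref{HaikuB}, and from there your Step 2 goes through verbatim. In this scheme $d_{i-1}$ enters only through $\FB(d_{i-1})=d_{i-1}^\lambda$ on the boundary side of the lantern relation.
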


\noindent
In \cite{LNS}, the proof of this Proposition was omitted for reasons of
space. We give the complete proof below, but first we show why this
result is enough to prove Theorem \ref{GGzero}. 

\begin{proof}[Proof of Theorem \ref{GGzero} by using Proposition \ref{F(di)=di}]
Wajnryb's Theorem \ref{wajnryb-ii} above shows that the only relation that 
must be added to the presentation of $\GGone$ to obtain a presentation of 
$\GGzero$ is the relation $d_g=d'_g$. Since $d'_g=y_{2g+1}^{-1}d_gy_{2g+1}$, 
this relation is equivalent to the commutation relation $d_gy_{2g+1}=
y_{2g+1}d_g$ in $\GGzero$. But since $F(d_g)=d_g^\lambda$ by Proposition 
\ref{F(di)=di}, and $F(y_{2g+1})=y_{2g+1})^\lambda$ 
by Proposition \ref{braidrels}, the images $F(d_g)$ and $F(y_{2g+1})$ so
indeed commute, completing the proof of Theorem \ref{GGzero}.
\end{proof}

The rest of this section will be devoted to proving
Proposition \ref{F(di)=di}. 
The fact that $F(d_2)=d_2^\lambda$ comes from \eqref{Faction}, and we proved that
$F(d_3)=d_3^\lambda$ in \S 5.2. 
We will use the same method to prove that $F(d_i)=d_i^\lambda$ by induction.
The method is as follows. Given $i\ge 3$,
we cut out a 
subsurface $S_i$ of genus $0$ with four boundary components along the loops $\delta_{i-1}$, $\alpha_{2i-1}$,
$\alpha_{2i+1}$ and $\delta_{i+1}$. The loop $\delta_i$ sits on this subsurface, dividing the boundary
components $\delta_{i-1}$ and $\alpha_{2i-1}$ from the boundary components $\alpha_{2i+1}$ and $\delta_{i+1}$.
We saw in Figure 3 that the (inverse of the) braid 
$t_i=a_{2i}a_{2i-1}a_{2i+1}a_{2i}$ acts on 
the loop $\delta_i$ drawn in the
top diagram of Figure 3 by deforming it to the loop 
$\gamma_{1,i}$
in the middle diagram, and the braid $t'_{i-1}
=a_{2i-2}d_{i-1}a_{2i-1}a_{2i-2}$ then deforms that loop 
to the loop $\gamma_{2,i}$ in the lowest diagram.
The corresponding twists can be written as
\begin{equation*}
g_{1,i}:=t_i^{-1}d_it_i, \qquad
g_{2,i}:=(t_2\cdots t_it_1\cdots t_{i-1} )^{-1}d_2(t_2\cdots t_it_1\cdots t_{i-1}).
\end{equation*}
In fact, the above expression of $g_{2,i}$ is given in 
Wajnryb's paper \cite[p.166 line\,$-2$]{W}, where
our loop $\gamma_{2,i}$ corresponds to
a loop denoted $\delta_{i,i+1}$ in \cite{W}.
Later we will use an inductive identity 
$g_{2,i+1}=t_{i}^{-1} t_{i+1}^{-1}g_{2,i}t_{i+1} t_{i}$
that immediately follows from this expression
(cf. \cite[p.165 (10)]{W}).

Since the three loops $\gamma_{2,i}$, $\gamma_{1,i}$ and $\delta_i$ 
form a lantern in the subsurface cut out by loops $\delta_{i-1}$,
$\alpha_{2i-1}$, $\alpha_{2i+1}$ and $\delta_{i+1}$, 
we have the lantern relation 
\begin{equation}
\label{lantern_i}
g_{2,i}g_{1,i} d_i=d_{i-1}a_{2i-1}a_{2i+1}d_{i+1},
\end{equation}
and since we know the action of $F$ on all the $a_i$ and on $d_2$ (and
$d_3$), we can make the inductive hypothesis that $F(d_i)=d_i^\lambda$ 
and use this equality to compute $F(d_{i+1})$.  

Let us define
$$\begin{cases}
g'_{1,i}:=t_i^{-1}d'_it_i\cr
g'_{2,i}:=(t_2\cdots t_it_1\cdots t_{i-1})^{-1}d'_2(t_2\cdots t_it_1\cdots t_{i-1}).
\end{cases}$$
Then we have the following result.

\begin{lem}
\label{GTaction_g1-g2}
Let $F=(\lambda,f)\in\GT$ and consider the action
of 
$\FB=\mathrm{inn}(\prod_{j=1}^{g-1} f(y_{2j+1},a_{2j+1}^2))\circ F$
on $\GGone$ given in Theorem \ref{thm11}.
Write $m:=\frac{\lambda-1}{2}$.
Then, for $3\le i\le g-1$, we have
\begin{enumerate}
\item[(i)] 
$\FB(g_{1,i})=(d_id_i')^m
f(g_{1,i}g_{1,i}',d_id_i')g_{1,i}^\lambda 
f(d_id_i',g_{1,i}g_{1,i}')(d_id_i')^{-m};$
\item[(ii)]
$\FB(g_{2,i})=f(g_{2,i}g_{2,i}',d_id_i')g_{2,i}^\lambda 
f(d_id_i',g_{2,i}g_{2,i}')$.
\end{enumerate}
\end{lem}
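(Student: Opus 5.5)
The plan is to copy the genus-$3$ computation of \S5.4 (the derivation of \eqref{g1g2ter}), replacing the eight strands $x_1,\ldots,x_8$ by strands $2i-3,\ldots,2i+2$. Here the twist $d_{i-1}$ plays the role of $a_1$, via the homomorphism $\iota_i$ of Lemma \ref{iotas}. The first step is to fix the relevant copy of the braid group. The elements $t_{i-1}'=a_{2i-2}d_{i-1}a_{2i-1}a_{2i-2}$, $t_i$ and $t_{i+1}$ are flat crossings of consecutive double packets. The bracketing $\mathcal B$ of \eqref{threebracketings} sits inside the global bracketing defining $\FB$, namely the one with brackets around each pair $(x_{2j-1},x_{2j})$ and the left-nested brackets around the packets. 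Since $i\ge 3$ and the inductive hypothesis gives $F(d_{i-1})=d_{i-1}^\lambda$, the restriction of $\FB$ to the subgroup generated by $a_{2i-2},\ldots,a_{2i}$ and $d_{i-1}$ should act as in Theorem \ref{Amovetheorem}(ii),(iii). So I would aim to show
\[
\FB(t_{i-1}')=t_{i-1}'^{\lambda},\qquad \FB(t_i)=f(t_i^2,t_{i-1}'^2)\,t_i^\lambda\, f(t_{i-1}'^2,t_i^2),
\]
together with $\FB(d_i)=d_i^\lambda$ and $\FB(d_i')=d_i'^\lambda$. The last two are the analogue of Lemma \ref{FBddprime}. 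They follow from the inductive hypothesis, since $\FB$ differs from $F$ by conjugation by $\prod_j f(y_{2j+1},a_{2j+1}^2)$. The factors with $2j+1<2i-1$ commute with $d_i$ and $d_i'$. For the remaining factors I would use the argument of Lemma \ref{FBddprime}: $y_{2j+1}$ anticommutes with $d_i^{-1}d_i'$, and then Haiku (iii) applies.

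Next I would express the lantern twists through these crossings. I would write $g_{1,i}=t_i^{-1}d_it_i$ and, using the inductive identity $g_{2,i}=t_{i-1}^{-1}t_i^{-1}g_{2,i-1}t_it_{i-1}$ together with the topological description in Definition \ref{loops}, $g_{2,i}=t_{i-1}'^{-1}t_i^{-1}d_it_it'_{i-1}$. Then I would apply $\FB$ exactly as in \eqref{g1g2}. For $g_{1,i}$ this uses relation (II) in the form \eqref{FBt2}, with $t_1,t_2$ replaced by $t'_{i-1},t_i$, and it produces the factor $t_{i-1}'^{2m}$ and the $f$'s in $t_{i-1}'^2$ and $t_i^{-1}t_{i-1}'^2t_i$. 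For $g_{2,i}$ the factors $f(t_i^2,t_{i-1}'^2)$ appear on both sides.

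The third step is the replacement trick of \eqref{g1g2bis}--\eqref{a1a3}. I would multiply $t_{i-1}'^2$ by $d_{i-1}^2a_{2i-1}^2$ and $t_i^2$ by $a_{2i-1}^2a_{2i+1}^2$, which is allowed because these squares commute with all the arguments. This uses Haiku (i) for the arguments of $f$ and commutation for the outer conjugation. The resulting full twists on four-strand packets are then identified with the boundary twists:
\[
d_{i-1}^2a_{2i-1}^2t_{i-1}'^2=d_id_i',\qquad t_i^{-1}(d_id_i')t_i=g_{1,i}g_{1,i}',\qquad a_{2i-1}^2a_{2i+1}^2t_i^2=g_{2,i}g_{2,i}'.
\]
These are the analogues of \eqref{a1a3}. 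They come from the relation $d_id_i'=\iota_{i+1}(\omega_{2i})$, which is the boundary of the genus-$i$ subsurface, transported by the crossing braids. The last identity needs the fact that $g_{2,i}'$ is the twist along the mirror curve. Since $(t_{i-1}'^2d_{i-1}^2a_{2i-1}^2)^m=(d_id_i')^m$, this yields (i) and (ii) as stated.

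The main obstacle is the first step: justifying that $\FB$ acts on $t'_{i-1}$ and $t_i$ by the Drinfeld/A-move formulas even though $d_{i-1}$ is not among the $a_j$. I would try to extend Theorem \ref{doublestrandsubgroup} through the homomorphism $\iota_i'$ of Lemmas \ref{technical} and \ref{iotas}, applied to the strands $1,\ldots,2i+2$ with the last crossing sent to the appropriate $d$. I would also check carefully that the conjugating factors $f(y_{2j+1},a_{2j+1}^2)$ for $j\ge i+1$ commute with everything involved, using Lemma \ref{useful0}.
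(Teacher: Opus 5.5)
There is a genuine gap, and one of your key identities is false. In a chain $\sigma_1,\sigma_2,\sigma_3$ one has $\sigma_1^2\sigma_3^2(\sigma_2\sigma_1\sigma_3\sigma_2)^2=(\sigma_1\sigma_2\sigma_3)^4$. So $d_{i-1}^2a_{2i-1}^2t_{i-1}'^2=(d_{i-1}a_{2i-2}a_{2i-1})^4$ is the product of the twists along the two boundary curves of a regular neighbourhood of $\delta_{i-1}\cup\alpha_{2i-2}\cup\alpha_{2i-1}$. That neighbourhood has genus $1$ and two boundary components. On the other hand, $d_id_i'=w_{2i}$ says that $\delta_i\cup\delta_i'$ cuts $\Sigma_{g,1}$ into the genus $i-1$ neighbourhood of $\alpha_1,\dots,\alpha_{2i-1}$ and a piece with three boundary components. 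A product of twists along disjoint essential curves determines the curves. Hence your identity $d_{i-1}^2a_{2i-1}^2t_{i-1}'^2=d_id_i'$ would force the genus-one neighbourhood to be a component of the complement of $\delta_i\cup\delta_i'$, which is impossible for $i\ge 3$. The identity holds only for $i=2$, where $d_1=a_1$ and it is \eqref{a1a3}. The correct factorization is Proposition~\ref{AppBprop}(1): $d_id_i'=W_i\,a_1^2a_3^2\cdots a_{2i-1}^2$ with $W_i=(t_1\cdots t_{i-1})^i$. So the relevant full twist is the \emph{global} ribbon full twist on the first $i$ packets, not a local one built from $d_{i-1}$. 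Your final conversion to $d_id_i'$, $g_{1,i}g_{1,i}'$, $g_{2,i}g_{2,i}'$ therefore fails as written. At best you would have to carry the second boundary curve of the local chain instead of $\delta_i'$ and strip it off with Haiku (i), after disjointness checks you have not made.

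The more serious problem is the first step, which you flag but do not resolve, and which your suggested route cannot supply. Each $\iota_k$ sends its last generator to $d_{k-1}$ in the slot right after $a_{2k-2}$, i.e.\ in place of $a_{2k-1}$. Since $\delta_{i-1}$, $\alpha_{2i-3}$ and $\alpha_{2i-1}$ all meet $\alpha_{2i-2}$, the curves $\alpha_1,\dots,\alpha_{2i+1},\delta_{i-1}$ form a tree branching at $\alpha_{2i-2}$, not a chain. Consequently no $\iota_k$ contains $d_{i-1}$ together with $a_{2i-1},a_{2i},a_{2i+1}$. The local chain $\delta_{i-1},\alpha_{2i-2},\dots,\alpha_{2i+1}$ does give a map $\B_6\to\GGone$. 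However, $\FB(a_{2i-2})$ is defined by conjugating $a_{2i-2}^\lambda$ by $f$'s in global elements such as $y_{2i-2}$, $w_{2i-3}$, $w_{2i-1}$ (compare $\FB(a_4)$ in the Appendix), not in $d_{i-1}^2$. Nothing you cite relates this to a local Drinfeld action, so both $\FB(t_{i-1}')=t_{i-1}'^{\lambda}$ and your formula for $\FB(t_i)$ are unsupported. What is actually known for this $\FB$ is $\FB(t_i)=f(t_i^2,W_i)\,t_i^\lambda f(W_i,t_i^2)$, with $W_i$ in place of your $t_{i-1}'^2$.

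The paper never uses $t_{i-1}'$ or $d_{i-1}$. It works in the ribbon braid group $\langle t_1,\dots,t_{g-1}\rangle$. It proves (i) directly from $g_{1,i}=t_i^{-1}d_it_i$, the inductive hypothesis $\FB(d_i)=d_i^\lambda$, relation (II), and the fact that $d_i$ and $t_i$ commute with $(t_iW_i)^2=W_{i-1}W_{i+1}$. It proves (ii) by a separate induction on $i$ starting from \eqref{g1g2}, using $g_{2,i+1}=t_i^{-1}t_{i+1}^{-1}g_{2,i}t_{i+1}t_i$ and the pentagon relation (III).
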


\noindent
Note that the right hand sides of
Lemma \ref{GTaction_g1-g2} can be written as
\begin{equation}
\label{HaikuB}
\FB(g_{1,i})=(d_i)^m
f(g_{1,i},d_i)g_{1,i}^\lambda 
f(d_i,g_{1,i})(d_i)^{-m}; \qquad
\FB(g_{2,i})=f(g_{2,i},d_i)g_{2,i}^\lambda 
f(d_i,g_{2,i})
\end{equation}
by using (iii) of the Haiku Lemma.

\vspace{.3cm}
This lemma, along with the induction hypothesis $\FB(d_k)=d_k^\lambda$
for $k\le i$, will play here exactly the same role 
in the calculation of $F(d_{i+1})$ as \eqref{g1g2} in subsection \ref{CompFd3}
played to pass from $d_2$ to $d_3$. Before giving its proof, we show how
to use it to complete the proof of Proposition \ref{F(di)=di}.

\begin{proof}[Proof of Proposition \ref{F(di)=di}
by using Lemma \ref{GTaction_g1-g2}]

The formula \eqref{HaikuB} allows us to calculate
the action of $\FB$ on the left hand side 
$\Delta:=g_{2,i}g_{1,i}d_i$ of the lantern relation \eqref{lantern_i} 
as
\begin{align*}
\FB(\Delta)=\FB(g_{2,i}g_{1,i}d_i)
&=
f(g_{2,i},d_i)g_{2,i}^\lambda f(d_i,g_{2,i})
d_i^m f(g_{1,i},d_i)g_{1,i}^\lambda f(d_i,g_{1,i})d_i^{-m}
d_i^\lambda \\
&=
f(g_{2,i},d_i)g_{2,i}^\lambda 
\cdot
g_{2,i}^{-m}f(g_{1,i}, g_{2,i}) g_{1,i}^{-m}\Delta^m 
\cdot
g_{1,i}^\lambda
f(d_i,g_{1,i})d_i^{-m}
d_i^\lambda 
\\
&=
f(g_{2,i},d_i)g_{2,i}^{\frac{\lambda+1}{2}}
f(g_{1,i}, g_{2,i}) g_{1,i}^{\frac{\lambda+1}{2}}
 f(d_i,g_{1,i})
d_i^{\frac{\lambda+1}{2}}
\Delta^m 
\\
&=
\Delta^{\frac{\lambda+1}{2}}\Delta^m =\Delta^\lambda,
\end{align*} 
where we used relation (II) 
(in the form of \cite{NS} Lemma 1.5 (1.5.2)) for the second equality.
By the inductive hypothesis, the action on the right hand side 
of \eqref{lantern_i} is $d_{i-1}^\lambda a_{2i-1}^\lambda
a_{2i+1}^\lambda \FB(d_{i+1})$; hence we get $\FB(d_{i+1})=d_{i+1}^\lambda$.
This settles the proof of Proposition
\ref{F(di)=di}. 
\end{proof}

To complete the proof of Theorem \ref{GGzero}, it remains 
to prove Lemma \ref{GTaction_g1-g2}.
Recall that the elements $t_1=a_2a_1a_3a_2$,
$t_2=a_4a_3a_5a_4$, $\dots$ 
induced from the original braids $a_1,a_2,\dots$
behave like ribbon braids, flat crossing the underlying strands
in pairs.
In particular, they satisfy the braid relations
$t_it_{i+1}t_i=t_{i+1}t_it_{i+1}$,
$t_it_j=t_jt_i$ ($|i-j|>1$).

Set
\begin{align*}
&Y_k:=t_{k-1}\cdots t_1\, t_1\cdots t_{k-1};\\
&W_k:=(t_1t_2\cdots t_{k-1})^k=Y_2 Y_3 \cdots Y_k
\end{align*}
for $k\ge 2$.

\begin{prop} \label{AppBprop}
Notations being as above,
the inside factors of Lemma \ref{GTaction_g1-g2}
can be written as follows.
\begin{enumerate}
\item
$d_id_i' =w_{2i}=W_ia_1^2a_3^2\cdots a_{2i-1}^2$.

\item $g_{1,i}g_{1,i}'=t_i^{-1}w_{2i}t_i$.

\item
$g_{2,i}g_{2,i}'=t_i^2a_{2i-1}^2a_{2i+1}^2$.

\end{enumerate}
\end{prop}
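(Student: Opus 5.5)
We prove the three identities in order; (1) does most of the work, and (2) and (3) follow from it by conjugation together with the inductive identity $g_{2,i+1}=t_i^{-1}t_{i+1}^{-1}g_{2,i}t_{i+1}t_i$.

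\emph{Part (1).} The first equality $d_id_i'=w_{2i}$ is the standard relation already used in Lemma \ref{iotas} (with $k=i+1$). It is explained in \S 3 by the fact that $w_{2i}$ is the boundary twist of the subsurface cut out by $\delta_i$ and $\delta_i'$. The second equality is a pure braid identity inside $\iota(\B_{2i})$, with the strands grouped in consecutive pairs $X_j=(x_{2j-1},x_{2j})$. We argue by induction on $i$ via the factorization of full twists used in Theorem \ref{Amovetheorem}(ii). The full twist on $2i$ strands splits as the full twist of the packets, i.e.\ the $\B_i$-full twist $W_i$ on the ribbon generators $t_1,\ldots,t_{i-1}$, times the internal full twists $\sigma_{2j-1}^2$ of each packet. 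Under $\iota$ this reads $w_{2i}=W_i\,a_1^2a_3^2\cdots a_{2i-1}^2$. The factors commute because each $a_{2j-1}^2$ commutes with every $t_k$: conjugation by $t_k$ swaps $a_{2k-1}$ and $a_{2k+1}$, and $a_{2k-1}^2a_{2k+1}^2$ is central in the relevant $\B_4$. The case $i=2$ is exactly the first line of \eqref{a1a3}.

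\emph{Part (2).} By definition $g_{1,i}g_{1,i}'=t_i^{-1}d_id_i't_i$. Applying (1) gives $t_i^{-1}w_{2i}t_i$ immediately.

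\emph{Part (3).} This is the delicate part. We expect the main obstacle to be tracking the conjugating element $P_i=t_2\cdots t_it_1\cdots t_{i-1}$ acting on $d_2d_2'=w_4$. We have $g_{2,i}g_{2,i}'=P_i^{-1}w_4P_i$, with $w_4=T_{1234}=t_1^2a_1^2a_3^2$. So we must show that conjugation by $P_i$ carries the full twist of packets $X_1,X_2$ to the full twist of packets $X_i,X_{i+1}$, namely $t_i^2a_{2i-1}^2a_{2i+1}^2=T_{2i-1,\ldots,2i+2}$.

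We handle this by working in the ribbon braid group generated by the $t_j$, which satisfy the braid relations as noted above. In the abstract $B$, the element $P_i=(\tau_2\cdots\tau_i)(\tau_1\cdots\tau_{i-1})$ has the following effect on packets. The factor $\tau_1\cdots\tau_{i-1}$, read as a braid, moves the packet in position 1 to position $i$. The factor $\tau_2\cdots\tau_i$ moves the packet in position 2 to position $i+1$. Hence $P_i$ carries the unordered pair of packets $\{1,2\}$ onto $\{i,i+1\}$ as a flat (cabled) braid. We pick the convention for conjugation direction, $P_i^{-1}(\cdot)P_i$ versus its inverse, by checking the case $i=2$ directly. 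There $P_2=t_2t_1$, and we need $t_1^{-1}t_2^{-1}t_1^2t_2t_1=t_2^2$, which is the standard braid identity $(\tau_2\tau_1)^{-1}\tau_1\,(\tau_2\tau_1)=\tau_2$ squared.

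Two facts then finish the argument. First, the full twist of two adjacent packets is conjugated by such a cabled braid to the full twist of the image packets; on the ribbon level this is $P_i^{-1}t_1P_i=t_i$, which follows from the braid relations by induction on $i$. Second, the internal twists are transported the same way: $P_i^{-1}\{a_1^2,a_3^2\}P_i=\{a_{2i-1}^2,a_{2i+1}^2\}$, since each $t_j$ permutes the odd-indexed squares $a_{2j-1}^2\leftrightarrow a_{2j+1}^2$. As an alternative, the inductive identity $g_{2,i+1}=t_i^{-1}t_{i+1}^{-1}g_{2,i}t_{i+1}t_i$ reduces everything to checking $(t_{i+1}t_i)^{-1}t_i^2(t_{i+1}t_i)=t_{i+1}^2$ and the analogous transport of $a_{2i-1}^2a_{2i+1}^2$ to $a_{2i+1}^2a_{2i+3}^2$. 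We would use this route, with base case $i=2$ from \eqref{a1a3}.
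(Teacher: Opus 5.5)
Your proposal is correct, and for (2) and (3) it is the paper's argument. (2) is immediate from (1). For (3), the route you settle on is exactly the paper's computation: induction on $i$ via $g_{2,i+1}=t_i^{-1}t_{i+1}^{-1}g_{2,i}t_{i+1}t_i$ (the same identity holds for $g'_{2,i}$, and both follow from $P_{i+1}=P_it_{i+1}t_i$), base case \eqref{a1a3}, and the two checks $(t_{i+1}t_i)^{-1}t_i^2(t_{i+1}t_i)=t_{i+1}^2$ and $(t_{i+1}t_i)^{-1}a_{2i-1}^2a_{2i+1}^2(t_{i+1}t_i)=a_{2i+1}^2a_{2i+3}^2$.

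The real difference is in (1).
\begin{itemize}
\item You appeal to the geometric cabling principle: the full twist on $2i$ strands equals the cabled full twist of the $i$ packets times the internal twists. You extrapolate this from the two-packet statement of Theorem \ref{Amovetheorem}(ii).
\item The paper stays inside braid algebra. It inducts using $w_{2k+2}=y_{2k+2}y_{2k+1}w_{2k}$ together with Lemma \ref{exLemma}(iv), $y_{2k+2}y_{2k+1}=t_ky_{2k}y_{2k-1}t_k=Y_{k+1}a_{2k+1}^2$, which is proved by explicit braid relations.
\end{itemize}
These are the same induction in disguise. Lemma \ref{exLemma}(iv) is precisely the algebraic check that the pure crossing of the packet $(1,\ldots,2k)$ around $(2k+1,2k+2)$ equals $Y_{k+1}a_{2k+1}^2$. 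Your version is shorter and more conceptual. However, it takes on faith two things the paper actually proves: that the flat packet crossing squared is the cable of $\eta_{k+1}$, and the extension from two packets to many.

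One justification in your (1) is wrong as stated.
\begin{itemize}
\item $a_{2j-1}^2$ does not commute with every $t_k$. As you note yourself, conjugation by $t_k$ swaps $a_{2k-1}$ and $a_{2k+1}$.
\item $a_{2k-1}^2a_{2k+1}^2$ is not central in $\langle a_{2k-1},a_{2k},a_{2k+1}\rangle$, since it fails to commute with $a_{2k}$.
\end{itemize}
What is true, and is all you need, is that each $a_{2j-1}$ commutes with $W_i$. Conjugation by $t_k$ permutes the odd generators by the transposition of indices $k,k+1$, and $W_i=(t_1\cdots t_{i-1})^i$ induces the trivial permutation. In an inductive version of the argument, the only commutation needed is that $a_{2i-1}^2$ commutes with $W_{i-1}$, which holds because $W_{i-1}$ involves only $a_1,\ldots,a_{2i-3}$.
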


For the proof of this proposition, 
the following lemma will be useful.
\begin{lem}
\label{exLemma}
\begin{enumerate}
\item[(i)] If $i\ne j$ then $W_j$ commutes with $t_i$.
\item[(ii)] If $i\ne j$ then $d_j$, $w_{2j}$ 
commute with both 
$g_{1,i}$ and $g_{2,i}$.
\item[(iii)] $(t_iW_i)^2=W_{i+1}W_{i-1}$.
\item[(iv)] $t_iy_{2i}y_{2i-1}t_i=y_{2i+2}y_{2i+1}=Y_{i+1}a_{2i+1}^2$.
\end{enumerate}
\end{lem}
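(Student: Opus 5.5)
The plan is to treat (i), (iii) and (iv) as identities inside the image of the braid group under $\iota$, or of the "doubled strand" braid group generated by the $t_i$. Part (ii) is topological: I will argue that the relevant loops are disjoint.

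For (i), I use that the $t_k$ satisfy the braid relations. So $\langle t_1,\dots,t_{g-1}\rangle$ is a quotient of an Artin braid group, with $t_k\leftrightarrow\sigma_k$. In the braid group, $\omega_j=(\sigma_1\cdots\sigma_{j-1})^j$ is central in $\langle\sigma_1,\dots,\sigma_{j-1}\rangle$, so $W_j$ commutes with $t_i$ for $i<j$. For $i>j$, $t_i$ involves only $a_{2i-1},\dots,a_{2i+1}$ with $2i-1\ge 2j+1$. These commute with every $a_l$, $l\le 2j-1$, that occurs in $W_j$. The case $i=j$ is excluded, so this covers everything.

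For (iii), I first write $t_iW_i$ as a braid identity in $\sigma$'s. By the same reasoning as in the proof of Lemma \ref{technical}, $W_{i+1}=W_iY_{i+1}$ and $W_i=W_{i-1}Y_i$. I also use $Y_{i+1}=t_iY_it_i$ and the pairwise commutation of the $Y_k$ and $W_k$. Then
\[
(t_iW_i)^2=t_iW_{i-1}Y_it_iW_{i-1}Y_i=W_{i-1}^2\,t_iY_it_iY_i .
\]
This uses (i), since $W_{i-1}$ commutes with $t_i$. Next $t_iY_it_iY_i=Y_{i+1}Y_i$, and $W_{i-1}^2Y_{i+1}Y_i=W_{i-1}\cdot W_iY_{i+1}=W_{i-1}W_{i+1}$.

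For (iv), I work in $\iota(\B_{2i+2})$. By definition $y_{2i+1}=a_{2i}y_{2i}a_{2i}$-type recursions hold, i.e.\ $y_{k+1}=a_ky_ka_k$. I expand $t_i=a_{2i}a_{2i-1}a_{2i+1}a_{2i}$ and push the factors through $y_{2i}y_{2i-1}$ using the braid relations. The intended pictorial meaning is that $t_i$ crosses the packet $(2i-1,2i)$ flatly over $(2i+1,2i+2)$. Conjugating the "pure braid around the first $2i$ strands" products by this crossing gives the corresponding products for the last two strands. Concretely, I would check $t_iy_{2i}y_{2i-1}t_i=y_{2i+2}y_{2i+1}$ by moving to the braid group $\B_{2i+2}$ and using $\eta_{2i}\eta_{2i-1}=\omega_{2i}\omega_{2i-2}^{-1}$. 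This gives $\omega$-type expressions, after which I compare with the central-element identity. The second equality $y_{2i+2}y_{2i+1}=Y_{i+1}a_{2i+1}^2$ follows from the first, applied inductively together with $Y_{i+1}=t_iY_it_i$. Its base case is $y_3y_2=a_2a_1^2a_2\cdot a_1^2\cdot(\ldots)$, which reduces to $t_1^2a_1^2a_3^2$-type identities already seen in \eqref{a1a3}. This braid bookkeeping is the main obstacle, and I would verify it first for $i=1,2$ by explicit braiding before inducting.

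For (ii), I argue geometrically. For $j\ne i$, the loop $\delta_j$ and the boundary loop of $w_{2j}$ (namely $\delta_j$ together with $\delta_j'$) are disjoint from $\gamma_{1,i}$ and $\gamma_{2,i}$. Indeed, those lie in the lantern subsurface $S_i$ bounded by $\delta_{i-1},\alpha_{2i-1},\alpha_{2i+1},\delta_{i+1}$, and $\delta_j$ is either a boundary of $S_i$ or lies outside it. Algebraically, $t_i$ and $t'_{i-1}$ commute with $d_j$ and $w_{2j}$ when $j\notin\{i-1,i\}$ or the relevant factors are disjoint. This is where I would use (i) and Lemma \ref{useful0}-type commutations to make it rigorous.
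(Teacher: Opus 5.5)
Your arguments for (i) and (iii) agree with the paper. For (i) you use centrality of $W_j$ in $\langle t_1,\dots,t_{j-1}\rangle$ plus far commutation. For (iii) you use $W_i=W_{i-1}Y_i$, $Y_{i+1}=t_iY_it_i$ and (i). Your inductive derivation of $y_{2i+2}y_{2i+1}=Y_{i+1}a_{2i+1}^2$ from the first equality of (iv), via $a_{2i-1}^2t_i=t_ia_{2i+1}^2$, also matches the paper. Part (ii) is the paper's topological argument. However, you assert without justification that $\delta'_j$ also avoids $S_i$, and this is needed for $w_{2j}=d_jd'_j$. A clean way to see it: $w_{2j}$ is the product of the two boundary twists of the neighbourhood $N_j$ of $\alpha_1\cup\dots\cup\alpha_{2j-1}$. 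For $j>i$ we have $S_i\subset N_j$, and for $j<i$ the surface $S_i$ lies outside $N_j$.

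The genuine gap is the first equality of (iv), $t_iy_{2i}y_{2i-1}t_i=y_{2i+2}y_{2i+1}$. It is the only real computation in the lemma, and you leave it as ``braid bookkeeping'' to be verified. The fallbacks you describe do not prove it:
\begin{itemize}
\item Checking $i=1,2$ and then ``inducting'' gives no inductive step for this identity. The only induction in your plan, for the second equality, consumes the first equality at every level.
\item Passing to $y_{2i}y_{2i-1}=w_{2i}w_{2i-2}^{-1}$ and ``comparing with the central-element identity'' requires something like $w_{2i}=W_ia_1^2a_3^2\cdots a_{2i-1}^2$. That is Proposition~\ref{AppBprop}(1), which the paper derives \emph{from} (iv). You would have to prove it independently, e.g.\ by cabling, to avoid circularity.
\item Your pictorial heuristic is inaccurate: $t_i(\cdot)t_i$ is not a conjugation. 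The extra factor $t_i^2$ is exactly the winding of the packet $(2i+1,2i+2)$ around $(2i-1,2i)$.
\end{itemize}

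What closes the gap, as in the paper, is actually carrying out your first sentence. Write $t_i=a_{2i}a_{2i+1}a_{2i-1}a_{2i}$ and $y_{2i}=a_{2i-1}y_{2i-1}a_{2i-1}$. Use that $y_{2i-1}$, a word in $a_1,\dots,a_{2i-2}$, commutes with $a_{2i}$ and $a_{2i+1}$. Then apply the braid relations among $a_{2i-1},a_{2i},a_{2i+1}$ a few times to reach
\[
a_{2i+1}a_{2i}a_{2i-1}y_{2i-1}a_{2i-1}a_{2i}a_{2i+1}\cdot a_{2i}a_{2i-1}y_{2i-1}a_{2i-1}a_{2i}=y_{2i+2}y_{2i+1}.
\]
Also, your base case is garbled. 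What is needed is $y_4y_3=t_1^2a_3^2=Y_2a_3^2$. This follows from $y_2y_3y_4=w_4=a_1^2a_3^2t_1^2$ (see \eqref{a1a3}), together with $y_2=a_1^2$, the commutation of the $y_k$, and the fact that $t_1^2$ commutes with $a_3$.
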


\begin{proof}
(i) By definition $W_j=(t_1\cdots t_{j-1})^j$, and this element commutes
with $t_i$ for all $j\ne i$ inside the ribbon-braid group generated by
$t_1,t_2,\dots$. 
For (ii), recall that $d_i$, $g_{1,i}$ and $g_{2,i}$
are twists along the loops $\delta_i$, $\gamma_{1,i}$ and 
$\gamma_{2,i}$ of Figure 3 which lie on the subsurface
$S_i$ of genus $0$ with four boundary components along the loops $\delta_{i-1}$, $\alpha_{2i-1}$,
$\alpha_{2i+1}$ and $\delta_{i+1}$. 
The assertion is clear from the locations of $S_i$
and $S_j$ when $i\ne j$ and from the fact
$w_{2j}=d_jd_j'$.
For (iii), we have 
$$(t_iW_i)^2=t_iY_iY_{i-1}\cdots Y_2t_iW_i
=(t_iY_it_i)Y_{i-1}\cdots Y_2(Y_iW_{i-1})
=Y_{i+1}Y_iY_{i-1}\cdots Y_2W_{i-1}
=W_{i+1}W_{i-1},$$
which implies the assertion.
Finally, the first equality of (iv) follows from the following computation
using only braid commutations and the usual braid relations (indicated
by the underlined portions).
\begin{align*}
t_iy_{2i}y_{2i-1}t_i &= 
a_{2i}a_{2i+1}\underline{a_{2i-1}
a_{2i}\cdot
a_{2i-1}}y_{2i-1}a_{2i-1}\cdot
y_{2i-1}\cdot
a_{2i}a_{2i+1}a_{2i-1}a_{2i}
\\
&=\underline{a_{2i}a_{2i+1}a_{2i}}
a_{2i-1}a_{2i}\,
y_{2i-1}a_{2i-1}\cdot
a_{2i}a_{2i+1}y_{2i-1}a_{2i-1}a_{2i}
\\
&=a_{2i+1}a_{2i}a_{2i+1}
a_{2i-1}a_{2i}\,
y_{2i-1}
a_{2i-1}\cdot
a_{2i}a_{2i+1}y_{2i-1}a_{2i-1}a_{2i}
\\
&=a_{2i+1}a_{2i}a_{2i-1}y_{2i-1}
\Bigl(a_{2i+1}\underline{a_{2i} a_{2i-1}\cdot a_{2i}}a_{2i+1}\Bigr)
y_{2i-1}a_{2i-1}a_{2i}\\
&=a_{2i+1}a_{2i}a_{2i-1}y_{2i-1}
\Bigl(a_{2i+1}a_{2i-1} a_{2i}a_{2i-1}a_{2i+1}\Bigr)
y_{2i-1}a_{2i-1}a_{2i}\\
&=a_{2i+1}a_{2i}a_{2i-1}y_{2i-1}
\Bigl(a_{2i-1}\underline{a_{2i+1} a_{2i}a_{2i+1}}a_{2i-1}\Bigr)
y_{2i-1}a_{2i-1}a_{2i}\\
&=a_{2i+1}a_{2i}a_{2i-1}y_{2i-1}\Bigl(a_{2i-1}a_{2i}a_{2i+1}a_{2i}
a_{2i-1}\Bigr)
y_{2i-1}a_{2i-1}a_{2i}\\
&=y_{2i+2}y_{2i+1}.
\end{align*}
For the second equality of (iv), we use induction on $i$. When $i=1$, 
it is easy to see that $y_4y_3=t_1^2a_3^2=Y_2^2a_3^2$. 
Suppose the desired equality for $i=k$.
Then since $t_k(a_{2k\pm 1})t_k^{-1}=a_{2k\mp 1}$, it follows that 
$t_ky_{2k}y_{2k-1}t_k=t_kY_{k}a_{2k-1}^2t_k=t_kY_{k}t_ka_{2k+1}^2
=Y_{k+1}a_{2k+1}^2$.
This completes the proof of the Lemma.
\end{proof}

\begin{proof}[Proof of Proposition \ref{AppBprop}]
(1) It suffices to prove the second equality 
$w_{2i}=W_i a_1^2\cdots a_{2i-1}^2$ by induction on $i$.
When $i=2$, $w_4=t_1^2a_1^2a_3^2=W_2a_1^2a_3^2$ holds.
Suppose it is true for $i\le k$. 
By Lemma \ref{exLemma} (iv) it follows that 
$y_{2k+2}y_{2k_+1}=t_ky_{2k}y_{2k-1}t_k=
Y_{k+1}a_{2k+1}^2$. Then
\begin{align*}
w_{2k+2}&=y_{2k+2}y_{2k_+1}\cdot w_{2k} \\
&=Y_{k+1}a_{2k+1}^2\cdot W_{k}a_1^2\cdots a_{2k-1}^2\\
&=Y_{k+1}W_k a_1^2\cdots a_{2k+1}^2=W_{k+1} a_1^2\cdots a_{2k+1}^2
\end{align*}
(2) follows from 
$g_{1,i}g_{1,i}'=t_i^{-1}(d_id_i')t_i$ and 
$d_id_i'=w_{2i}$. 
(3) For $i=2$, we have $g_{2,2}g_{2,2}'=w_4=t_1^2a_1^2a_3^2$.
Assume it holds for $i=k$, then,
\begin{align*}
g_{2,k+1}g_{2,k+1}'&=t_k^{-1}t_{k+1}^{-1}(g_{2,k}g_{2,k}')t_{k+1}t_k \\
&=t_k^{-1}t_{k+1}^{-1}(t_k^2 a_{2k-1}^2 a_{2k+1}^2)t_{k+1}t_k \\
&=t_k^{-1}t_{k+1}^{-1}t_k^2 a_{2k-1}^2 t_{k+1} a_{2k+3}^2 t_k\\
&=t_k^{-1}t_{k+1}^{-1}t_k^2 a_{2k-1}^2 t_{k+1} t_k a_{2k+3}^2\\
&=t_k^{-1}t_{k+1}^{-1}t_k^2  t_{k+1} t_k a_{2k+1}^2a_{2k+3}^2 =
t_{k+1}^2a_{2k+1}^2a_{2k+3}^2 ,
\end{align*}
hence the case of $i=k+1$ also follows.
\end{proof}

By virtue of Proposition \ref{AppBprop}, 
one finds that the formulas of Lemma \ref{GTaction_g1-g2}
are equivalent to
$$ 
\begin{cases}
\FB(g_{1,i})&=(W_i)^m
f(t_i^{-1}W_it_i,W_i) g_{1,i}^\lambda 
f(W_i,t_i^{-1}W_it_i) W_i^{-m}; \\
\FB(g_{2,i})&=f(t_i^2,W_i)g_{2,i}^\lambda 
f(W_i,t_i^2).
\end{cases}
$$

\begin{figure}[h!]
    \centering
    \includegraphics[width=0.8\textwidth]{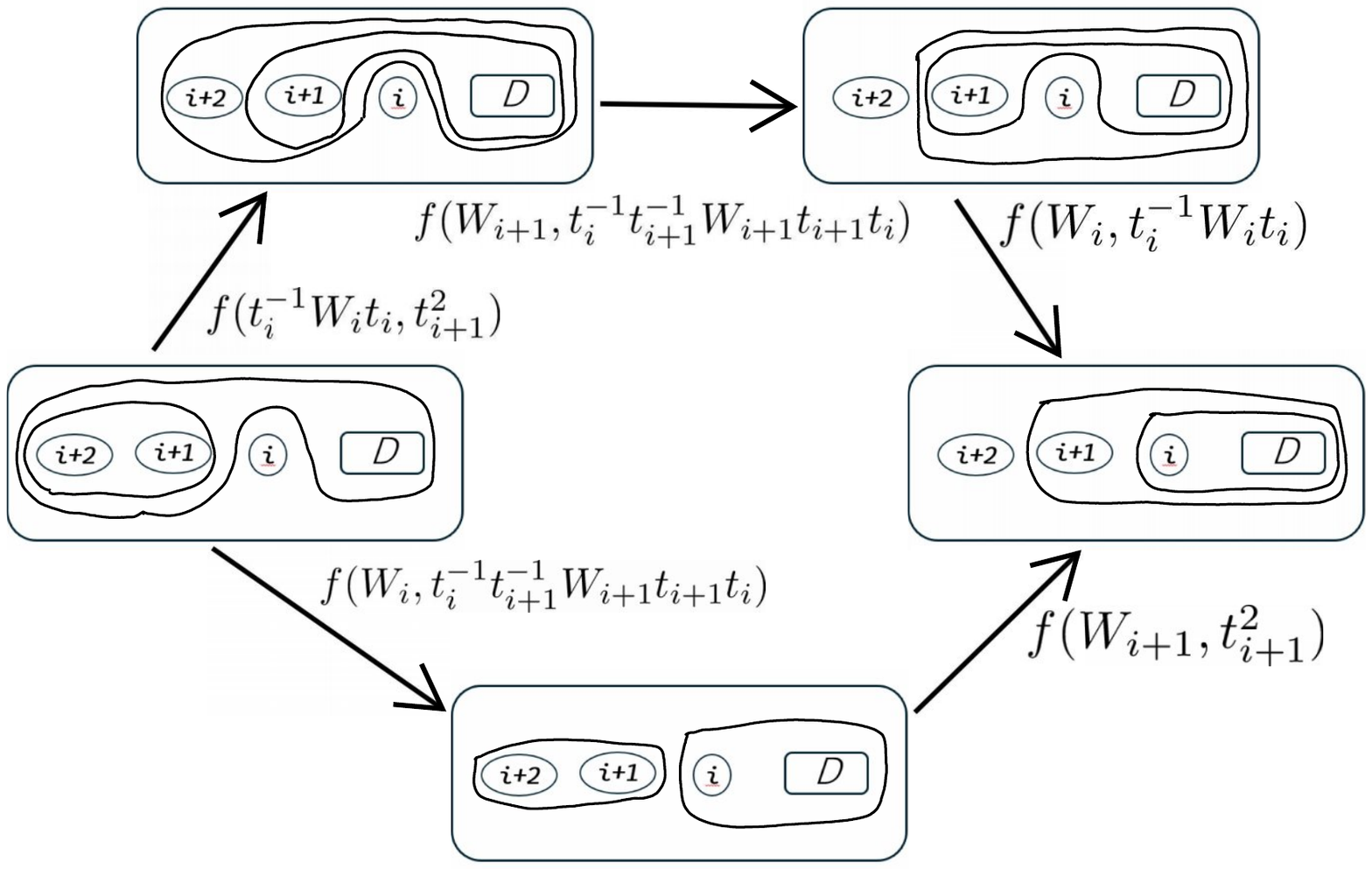}
    \caption{A-moves at the level of ribbon braids}
    \label{fig:diamond}
\end{figure}

\begin{proof}[Proof of Lemma \ref{GTaction_g1-g2}]

Below we use the notation $[x]\!\ast y:=y^{-1}xy$ for simplicity.
The assertion (i) follows from the computation for $i\ge 2$:
\begin{align*}
\FB(g_{1,i})&=\FB(t_i^{-1}d_it_i) \\
&=[d_i^\lambda]\!\ast f(t_i^2,W_i)t_i^\lambda f(W_i,t_i^2) \\
&=[d_i^\lambda]\!\ast t_i f(t_i^2,t_i^{-1}W_it_i) t_i^{\lambda-1}
f(W_i,t_i^2) \\
&=[d_i^\lambda]\!\ast t_i (t_iW_i)^2
f(W_i,t_i^{-1}W_it_i) W_i^{-m}\\
&=[t_i^{-1}d_i^\lambda t_i]\!\ast
f(W_i,t_i^{-1}W_it_i) W_i^{-m},
\end{align*}
where the fact that $d_i,t_i$ commute with 
$(t_iW_i)^2=W_{i-1}W_{i+1}$ (Lemma \ref{exLemma})
is used in the last equality.
We prove (ii) by induction on $i$. 
For $i=2$, it is already shown in \eqref{g1g2}.
Suppose $\FB(g_{2,k})$ ($k\le i$) are given in the
form of assertion. 
Then, 
\begin{align*}
\FB(g_{2,i+1})=&\FB(t_{i}^{-1} t_{i+1}^{-1}g_{2,i}t_{i+1} t_{i}) 
=\bigl[ \FB(g_{2,i}) \bigr]\!\ast \FB(t_{i+1}) \FB(t_i) \\
=&[g_{2,i}^\lambda]\!\ast f(W_i,t_i^2)\cdot
f(t_{i+1}^2,W_{i+1}) t_{i+1}^\lambda f(W_{i+1},t_{i+1}^2)
\cdot
f(t_{i}^2,W_{i}) t_{i}^\lambda f(W_{i},t_{i}^2)\\
=&
[\underline{g_{2,i}^\lambda}]\!\ast\Bigl( 
\uwave{f(W_i,t_i^2)}\cdot 
\underline{t_{i+1}
(t_{i+1}^{-1}W_{i+1}^{-m}} 
\uwave{t_{i+1}}) 
f(W_{i+1},t_{i+1}^{-1}W_{i+1}t_{i+1})
W_{i+1}^{-m}
(t_{i+1}W_{i+1})^{2m} \\
&\cdot
t_{i}(t_{i}W_{i})^{2m}(t_{i}^{-1}W_{i}^{-m}t_{i}) 
f(W_{i},t_{i}^{-1}W_{i}t_{i})W_{i}^{-m}\Bigr) 
\end{align*}
Since the underlined portion should vanish as
$W_{i+1}$ commutes with $g_{2,i}, W_i$ and $t_i^2$
by Lemma \ref{exLemma}, and since 
the underwaved portion can then be rewritten as
$$
t_{i+1}f(W_i,t_{i+1}^{-1}t_i^2t_{i+1})=t_{i+1}f(W_i,t_it_{i+1}^2t_i^{-1})
=t_{i+1}t_i f(t_i^{-1}W_it_i,t_{i+1}^2)t_i^{-1},
$$
the above calculation continues to
\allowdisplaybreaks
\begin{align*}
\FB(g_{2,i+1})=&
[g_{2,i}^\lambda]\!\ast\Bigl(
t_{i+1}t_i
f(t_i^{-1}W_it_i,t_{i+1}^2)
t_i^{-1}f(W_{i+1},t_{i+1}^{-1}W_{i+1}t_{i+1})
W_{i+1}^{-m}
(W_iW_{i+2})^m \\
&\cdot
t_{i}(W_{i-1}W_{i+1})^m(t_{i}^{-1}W_{i}^{-m}t_{i}) 
f(W_{i},t_{i}^{-1}W_{i}t_{i})W_{i}^{-m}\Bigr) \\
=&
[g_{2,i}^\lambda]\!\ast\Bigl(
t_{i+1}t_i
f(t_i^{-1}W_it_i,t_{i+1}^2)
f(W_{i+1},t_i^{-1}t_{i+1}^{-1}W_{i+1}t_{i+1}t_i)
W_{i+1}^{-m}
t_i^{-1}(W_iW_{i+2})^m t_i \\
&\cdot
(W_{i-1}W_{i+1})^m(W_{i}^{-m}) (t_{i}^{-1}W_{i}^{-m}t_{i}) 
f(t_iW_{i}t_{i}^{-1},W_{i})W_{i}^{-m}\Bigr) 
\\
=&
[g_{2,i+1}^\lambda]\!\ast\Bigl(
f(t_i^{-1}W_it_i,t_{i+1}^2)
f(W_{i+1},t_i^{-1}t_{i+1}^{-1}W_{i+1}t_{i+1}t_i)
f(t_iW_{i}t_{i}^{-1},W_{i})W_{i}^{-m}\Bigr) \\
=&
[g_{2,i+1}^\lambda]\!\ast\Bigl(
f(W_i,t_i^{-1}t_{i+1}^{-1}W_{i+1}t_{i+1}t_i)
f(W_{i+1},t_{i+1}^2)
W_{i}^{-m}\Bigr)\\
=&
[g_{2,i+1}^\lambda]\!\ast
f(W_{i+1},t_{i+1}^2)
=
f(t_{i+1}^2,W_i)
g_{2,i+1}^\lambda
f(W_{i+1},t_{i+1}^2)
.
\end{align*}
Note that relation (III) along the diagram of
Figure \ref{fig:diamond} is used for the above
second to last line, and commutativity of 
$g_{2,i+1}=t_i^{-1}t_{i+1}^{-1}g_{2,i}t_{i+1}t_i$ with
$W_i$ and $t_i^{-1}t_{i+1}^{-1}W_{i+1}t_{i+1}t_i$ 
(by Proposition \ref{AppBprop} (1) 
and Lemma \ref{exLemma} (ii))
for the last line.
The proof of assertion (ii) is completed. 
\end{proof}

\vspace{.2cm}
\section{Prospects for the general case of $\GGn$, $n>1$}

Let $\Sigma_{g,n}$ denote an oriented surface of genus $g$ and $n$ boundary components,
and consider the profinite mapping class group $\widehat\Gamma_{g,n}$ of it.
For $F=(\lambda, f)\in\GT$, knowing that $F(d_i)=d_i^\lambda$ for 
$i=1,\ldots,g$ in our standard action on $\GGone$ (Theorem \ref{thm11} and
Proposition \ref{F(di)=di}),
we can then cut along these loops in several ways to obtain subsurfaces
$\Sigma_{g',n'}$ of $\Sigma_{g,1}$
of smaller genus $g'$ with more than one boundary component. 
If the action of $F\in \GT$ on $\GGone$ restricts to {\it the image of}
$\widehat\Gamma_{g',n'}$, and if we take as a working hypothesis that the map
from $\widehat\Gamma_{g',n'}$ to its image inside $\GGone$ is injective, then
(ignoring small differences of effects of
punctures vs boundary components on mapping class groups of surfaces),
this would show that $F$ acts on $\widehat\Gamma_{g',n'}$, generalizing our main
result to the case of all $(g,n)$.

\begin{figure}[h!]
    \centering
    \includegraphics[width=0.8\textwidth]{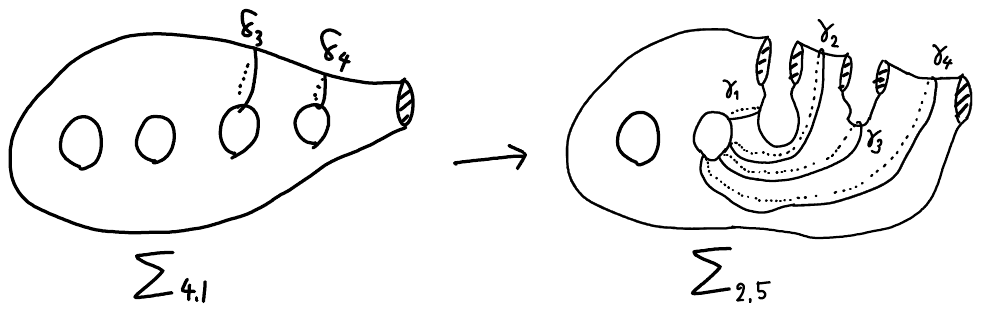}
    \caption{Surfaces  $\Sigma_{2,5}$ cut off from $\Sigma_{g,n}=\Sigma_{4,1}$}
    \label{fig:prospect}
\end{figure}

\vspace{.2cm}

A typical example of this cutting along loops to create more boundary components is shown in
Figure \ref{fig:prospect}, where the loops $\delta_3$ and $\delta_4$ on $\Sigma_{4,1}$ have been
cut, yielding a subsurface $\Sigma_{2,5}$.
The mapping class group $\Gamma_{2,5}$ is generated by the Gervais twists
along loops indicated in Figure \ref{fig:Fish}, and the $\GT$-action on the 
twists along the loops $\gamma_2,\gamma_3,\gamma_4$ shown in Figure \ref{fig:prospect}
should, in principle, be identical to the action on the same twists along the same loops
viewed as sitting on $\widehat\Gamma_{4,1}$.  If one shows that $F(\gamma_i)$ (for $i=2,3,4$) 
can be expressed in $\widehat\Gamma_{2,5}$ for every $F=(\lambda,f)\in\GT$ acting on 
$\widehat\Gamma_{4,1}$ as in Theorem \ref{thm11}, then we obtain a $\GT$-action on the image of 
$\widehat\Gamma_{2,5}\to \widehat\Gamma_{4,1}$.  At the writing of this paper, we do not know if this 
holds for $\GT$. However, we do know that it holds for a particular subgroup $\mathrm{I\!\Gamma}$ of
$\GT$ introduced in \cite{HLS}, \cite{LNS}, \cite{NS}, where Grothendieck's ``game of Lego'' is explained,
and it is shown that the $\mathrm{I\!\Gamma}$-action is a ``Lego action'' in the sense that it respects
the rules of this game (\cite{Gr}).  One important feature of the Lego game is that the 
$\mathrm{I\!\Gamma}$-action on $\widehat\Gamma_{g,n}$ for all $(g,n)$ respects all the homomorphisms 
between the profinite mapping class groups induced by cutting along simple loops, like the example
$\widehat\Gamma_{2,5}\rightarrow \widehat\Gamma_{4,1}$ above. The question of whether the
``Lego action'' of $\mathrm{I\!\Gamma}$ extends to a Lego action of the full group $\GT$ acting on
$\GGone$ as in Theorem \ref{thm11} is the subject of a future research project of the authors of 
this paper.

\appendix
\section{Direct derivation of \eqref{t1t2t3}}

In the above text, we used the following action of 
$\GT$ in \eqref{t1t2t3} as a consequence of the
more general ``A-move formula''  
(Theorem \ref{Amovetheorem}):

\begin{equation}\label{Appt1t2t3}
\begin{cases}
F_{\mathcal{B}}(t_1)=t_1^\lambda,
\\
F_{\mathcal{B}}(t_2)=f(t_2^2,t_1^2)\,t_2^\lambda \,f(t_1^2,t_2^2),
\\
F_{\mathcal{B}}(t_3)=f(t_3^2,t_2t_1^2t_2)\,t_3^\lambda \,f(t_2t_1^2t_2,t_3^2),
\end{cases}
\end{equation}
where $t_i=a_{2i}a_{2i+1}a_{2i-1}a_{2i}$ ($i=1,2,3$).

In this appendix, we will show how these formula can
be derived from Drinfeld's $\GT$-action on the basic
braids $a_1,a_2,\dots$ directly.
Fix $n\ge 4$ and $F=(\lambda,f)\in\GT$ acting on the profinite
braid group 
$$
\widehat{B}_n=\left\langle 
a_1,\dots,a_{n-1}\left|
\begin{matrix} 
&a_ia_j=a_ja_i\quad (|i-j|>1), \\
&a_ia_{i+1}a_i=a_{i+1}a_ia_{i+1}\quad (i=1,\dots,n-2)
\end{matrix}
\right.\right\rangle.
$$
Below we shall write 
$$
\FB:=\left({\rm inn}\,\prod_{i=1}^{\lfloor (n-1)/2 \rfloor}
f(y_{2i+1},a_{2i+1}^2)
\right)\circ F
\quad\in  \mathrm{Aut}(\widehat{B}_n).
$$
We first show the simplest case:
\begin{prop}
Notations being as above,
$F_{\mathcal{B}}$ acts on $t_1=a_2a_1a_3a_2$ by
\begin{equation}
\FB(t_1)= t_1^\lambda.
\end{equation}

\end{prop}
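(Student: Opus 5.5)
The plan is to reduce the claim to a statement inside $\widehat B_4=\langle a_1,a_2,a_3\rangle$, and then verify it using only relations (I)–(III) defining $\GT$, with (III) the pentagon.

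\textbf{Step 1: discard the far factors.} For $i\ge 2$ the factor $f(y_{2i+1},a_{2i+1}^2)$ commutes with $t_1=a_2a_1a_3a_2$. Indeed, $a_{2i+1}$ commutes with $a_1,a_2,a_3$ as soon as $2i+1\ge 5$. Also, for $k\ge 5$,
$$y_k=\eta_k=\omega_k\omega_{k-1}^{-1}=\omega_k\,\omega_4^{-1}\,(\omega_5\omega_4^{-1})^{-1}\cdots,$$
which is a product of elements $\omega_j$ with $j\ge 4$, each central in $\widehat B_j\supset\widehat B_4$. Hence
$$\FB(t_1)=f(a_3^2,y_3)\,F(t_1)\,f(y_3,a_3^2).$$

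\textbf{Step 2: substitute Drinfeld's formulas.} Use \eqref{drinfeld}: $F(a_1)=a_1^\lambda$, $F(a_2)=f(a_2^2,a_1^2)a_2^\lambda f(a_1^2,a_2^2)$ and $F(a_3)=f(a_3^2,y_3)a_3^\lambda f(y_3,a_3^2)$. Since $a_1$ commutes with $a_3$, $a_3^2$ and $y_3=a_2a_1^2a_2$ (the last because $\eta_3=\omega_3\omega_2^{-1}$), and $x:=a_1a_3$ satisfies $\FB(x)=x^\lambda$, we get
$$\FB(t_1)=h\,a_2^\lambda\,h^{-1}\,x^\lambda\,h\,a_2^\lambda\,h^{-1},\qquad h:=f(a_3^2,y_3)f(a_2^2,a_1^2),$$
where $x^\lambda$ passes through the outer $f(a_3^2,y_3)$ factors.

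\textbf{Step 3: rewrite in pure-braid notation and simplify $h$.} Set $x_{12}=a_1^2$, $x_{23}=a_2^2$, $x_{34}=a_3^2$, and write $y_3=x_{13}x_{23}$. Then
$$h=f(x_{34},x_{13}x_{23})\,f(x_{23},x_{12}),$$
which is, up to inversion, the left-hand side of the pentagon relation in $\widehat P_4$:
$$f(x_{12},x_{23}x_{24})\,f(x_{13}x_{23},x_{34})=f(x_{23},x_{34})\,f(x_{12}x_{13},x_{24}x_{34})\,f(x_{12},x_{23}).$$
The mismatch between $f(x_{12},x_{23})$ and $f(x_{12},x_{23}x_{24})$ should be removable. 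The element $x_{24}$, or rather its product with suitable $x_{ij}$, is of the form $a_2^{-1}(\cdot)a_2$, which is exactly the conjugation $a_2^\lambda$ performs in Step 2. I would therefore apply the Haiku Lemma (ii)/(iii) to these commuting pieces.

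After the substitution, I expect $h=u\,v$, where:
\begin{itemize}
\item $u=f(x_{23},x_{34})^{-1}$-type terms commute with $a_2$ up to an explicit power. This should follow from relation (I) and the dihedral-type argument of Lemma \ref{technical}, since $a_2$ together with $x_{23}$ or $x_{34}$ generates a group in which squares are central.
\item $v=f(x_{12}x_{13},x_{24}x_{34})^{\pm1}$ involves the full twists $T_{12}T_{34}$-type elements, which commute with $x$.
\end{itemize}
Substituting, the conjugating factors between $a_2^\lambda$, $x^\lambda$ and $a_2^\lambda$ should telescope. The leftover powers, of the form $(x_{12}x_{13}x_{23})^m$ and $\omega_4^m$, are then absorbed using $t_1^2x_{12}x_{34}=\omega_4$ and $t_1x=(a_2a_1a_3)^2$. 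The target is the identity
$$a_2^\lambda x^\lambda a_2^\lambda\cdot(\text{central corrections})=(a_2xa_2)^\lambda=t_1^\lambda.$$
Here $(a_2x)^2$ is central in $\langle a_2,x\rangle$, since it equals $\omega_4$ times $x_{12}x_{34}$, and $x_{12}x_{34}$ commutes with both $a_2x a_2$ and $x$. This is what makes the final collection of powers work.

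\textbf{Main obstacle.} The hard step is Step 3: choosing the correct form of the pentagon (which labelling of the five cyclic terms) and tracking the exact powers of $m=(\lambda-1)/2$ produced by relation (II) and the Haiku Lemma (iii), so that everything cancels to exactly $t_1^\lambda$. My first attempt would be to conjugate the pentagon by $a_2$, turning $x_{24}$ into $x_{34}$ and $x_{13}$ into $x_{12}$-type generators, and match it to $h^{-1}$ directly.

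If this becomes unmanageable, the fallback is to observe that the claim is exactly \eqref{flatcrossing} of Theorem \ref{Amovetheorem} (ii) for the bracketing $(x_1,x_2),(x_3,x_4)$, together with Theorem \ref{doublestrandsubgroup} (ii). The direct computation would then serve only to make that consequence explicit.
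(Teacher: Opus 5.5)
Your Steps 1 and 2 agree with the paper's starting point, apart from a convention slip. Under ${\rm inn}(g)(u)=gug^{-1}$, which is the convention that gives $F_{\mathcal{B}}(a_3)=a_3^\lambda$ (and you use that fact), one gets $F_{\mathcal{B}}(t_1)=f(y_3,a_3^2)F(t_1)f(a_3^2,y_3)$ and $h=f(y_3,a_3^2)f(a_2^2,a_1^2)$. In the order you wrote, $F_{\mathcal{B}}(a_3)\neq a_3^\lambda$, and $x^\lambda$ does not pass through $f(a_3^2,y_3)$, since $a_3$ does not commute with $y_3$.

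The genuine gap is Step~3. The whole content of the proposition lies there, you do not carry it out, and the mechanisms you suggest would fail:
\begin{enumerate}
\item The group $\langle a_2,a_3^2\rangle$ does not have central squares, because $a_2^2$ and $a_3^2$ generate a free subgroup. So $f(a_2^2,a_3^2)$ does not commute with $a_2$ unless $f=1$.
\item The $f$-factors cannot telescope away and leave $a_2^\lambda x^\lambda a_2^\lambda$ times central corrections. The center of $\langle a_2,x\rangle$ (a group of Artin type $B_2$) is generated by $(a_2x)^2=t_1x$. This is the half twist, whose square is $w_4$; it is not $w_4a_1^2a_3^2$. Modulo this center, $\langle a_2,x\rangle$ becomes $\langle a_2\rangle*\langle c\rangle$ with $c=a_2x$ an involution and $t_1\equiv ca_2$. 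There $(ca_2)^\lambda$ and $a_2^\lambda(a_2^{-1}c)^\lambda a_2^\lambda$ are different in general.
\end{enumerate}
So the $f$'s must actively convert powers of $a_2$ into powers of $a_1$, $a_3$ and their conjugates. Rewriting $h$ alone by a pentagon does not address this.

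The missing idea, which is exactly the paper's proof, is to let relation (II) do this conversion, in the form $f(x,y)x^mf(z,x)z^mf(y,z)y^m=(xyz)^m$ for $x,y,z$ commuting with $xyz$.
\begin{enumerate}
\item Write $F_{\mathcal{B}}(a_2)$ in two ways. The first moves one factor $a_2$ to the right. The second additionally rewrites $f(a_2^2,a_1^2)a_2^{\lambda-1}f(a_2a_1^2a_2^{-1},a_2^2)$ by (II) for the triple $(a_2^2,a_1^2,a_2a_1^2a_2^{-1})$, whose product is $w_3$.
\item In $F_{\mathcal{B}}(a_2)F_{\mathcal{B}}(a_1)F_{\mathcal{B}}(a_3)F_{\mathcal{B}}(a_2)$, use the second form for the first factor and the first form for the last.
\item Use that conjugation by $t_1$ swaps $a_1$ and $a_3$, and apply (II) twice more.
\item Five factors survive: $f(w_3,a_3^2)$, $f(a_2a_1^2a_2^{-1},a_1^2)$, $f(t_1w_3t_1^{-1},w_3)$, $f(a_3^2,a_2a_1^2a_2^{-1})$ and $f(a_1^2,t_1w_3t_1^{-1})$. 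They run around the $5$-cycle of A-moves and cancel by the pentagon (III).
\item What remains is $a_1^{1-\lambda}a_3^{1-\lambda}w_4^mt_1$. This equals $t_1^\lambda$ because $t_1^2a_1^2a_3^2=w_4$ and $t_1$ commutes with $a_1a_3$.
\end{enumerate}

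Your fallback is logically correct. After Step~1 it amounts to a single A-move in $\widehat B_4$ plus Theorem~\ref{Amovetheorem}(ii), which is exactly how the main text obtains \eqref{t1t2t3}. But it defeats the purpose of this proposition, which is to derive \eqref{t1t2t3} directly from Drinfeld's formulas and relations (I)--(III), without the A-move theorem. As a direct derivation, your proposal is incomplete.
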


\begin{proof}
We use the hexagon relation three times in the form
\cite[Lemma 1.5]{NS}:
\begin{equation}
\label{NSlem1.5}
f(x,y)x^m f(z,x) z^m f(y,z) y^m=\omega^m
\end{equation}
where each of $x,y,z$ is supposed commuting
with $\omega:=xyz$.

Now by definition $\FB$ acts on $a_1,a_2,a_3$ by 
\begin{align}
&\FB(a_1)=a_1^\lambda, \quad
\FB(a_3)=a_3^\lambda \\
&\FB(a_2)=f(w_3,a_3^2) f(a_2^2,a_1^2) a_2^\lambda
f(a_1^2,a_2^2)f(a_3^2,w_3) 
\end{align}
but we use two different expressions for $\FB(a_2)$:
\begin{align}
\FB(a_2)
&= f(w_3,a_3^2) 
\underline{f(a_2^2,a_1^2) a_2^{\lambda-1}
f(a_2a_1^2a_2^{-1},a_2^2)}
f(a_2a_3^2a_2^{-1},w_3)\cdot a_2
\label{form1}
\\
&=f(w_3,a_3^2) 
\underline{a_1^{1-\lambda}
f(a_2a_1^2a_2^{-1},a_1^2)
a_2a_1^{1-\lambda}a_2^{-1}
w_3^{m}}
f(a_2a_3^2a_2^{-1},w_3) \cdot a_2
\label{form2}
\end{align}
where \eqref{NSlem1.5} is applied to the underlined portion.

We compute $\FB(t_1)=\FB(a_2)\FB(a_1)\FB(a_3)\FB(a_2)$
along with the diagram in Figure \ref{fig:AmovesforT1},
where arrows between pants-decompositions
(of a punctured plane) have labels of $f(G,L)$ with
$G,L$ indicate twists of a gained loop and a lost loop
respectively.

\begin{figure}[h!]
    \centering
    \includegraphics[width=0.8\textwidth]{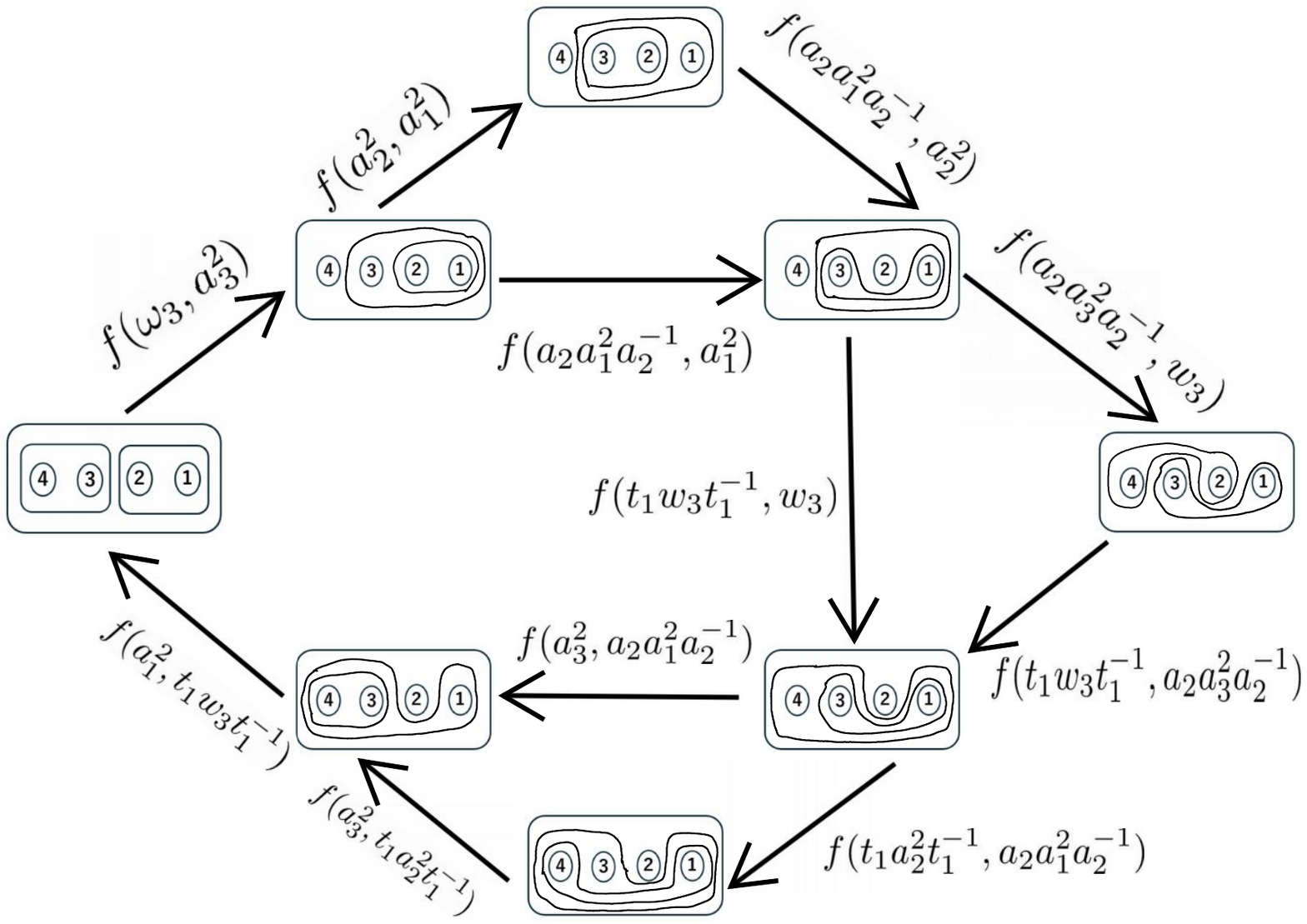}
    \caption{A-moves for $\FB(t_1)$}
    \label{fig:AmovesforT1}
\end{figure}

Let us apply \eqref{form2} to the first factor,
and \eqref{form1} to the last factor. 
We then get:
\begin{align}
&\FB(t_1)=\FB(a_2)\FB(a_1)\FB(a_3)\FB(a_2) 
\label{FBt1}
\\
=&
a_1^{1-\lambda}f(w_3,a_3^2) w_3^{m}
f(a_2a_1^2a_2^{-1},a_1^2)
a_2a_1^{1-\lambda}a_2^{-1}
f(a_2a_3^2a_2^{-1},w_3) 
\underline{
a_2a_1^{1-\lambda}a_2^{-1}\cdot a_2
\cdot (a_1^\lambda \cdot a_3^\lambda)}
\nonumber
\\
&\cdot f(w_3,a_3^2)
f(a_2^2,a_1^2) a_2^{\lambda-1}
f(a_2a_1^2a_2^{-1},a_2^2)
f(a_2a_3^2a_2^{-1},w_3)\cdot a_2
\nonumber
\end{align}

\begin{figure}[H]
    \centering
    \includegraphics[width=0.95\textwidth]{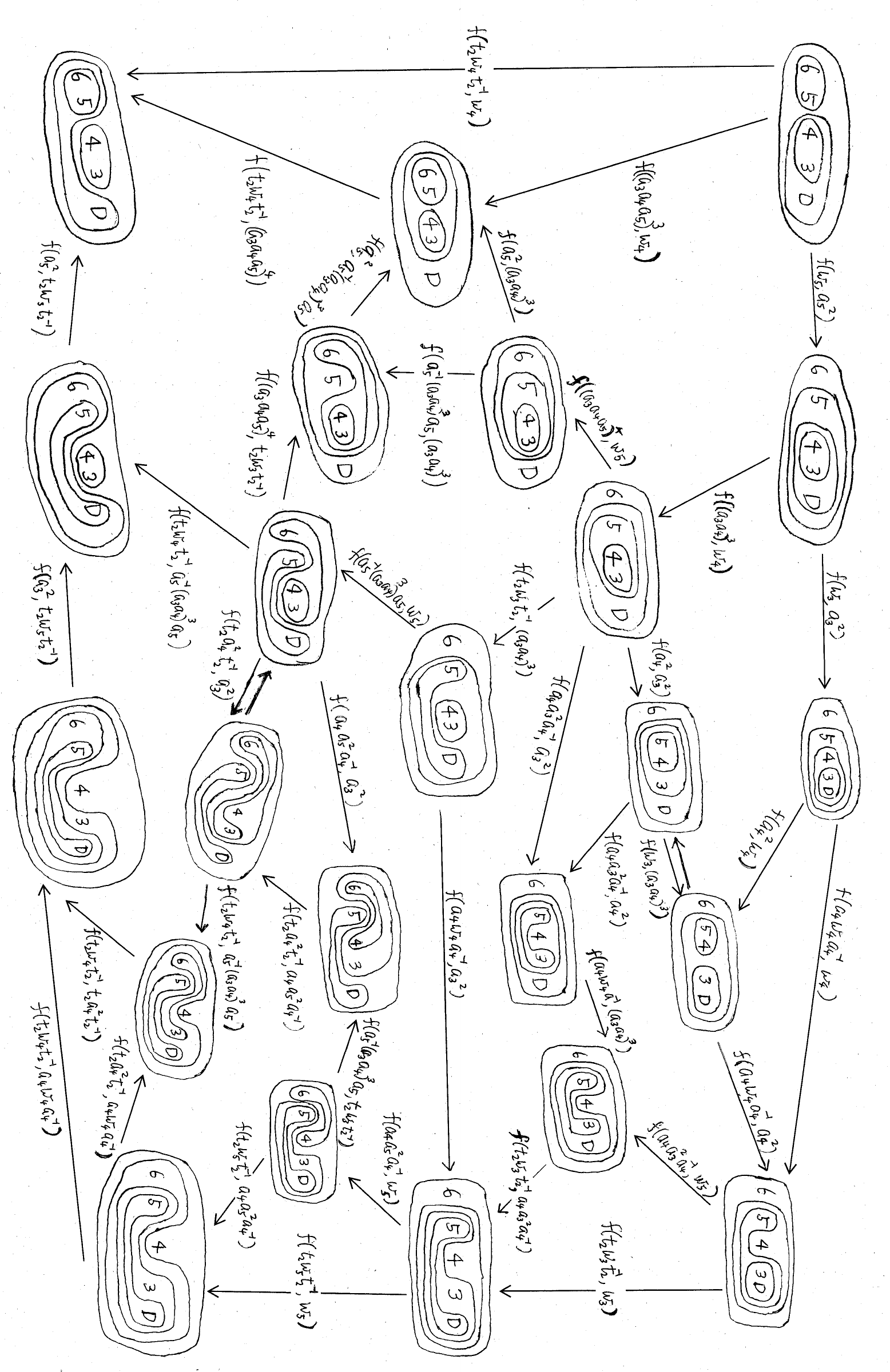}
  
    \caption{A-moves for $\FB(t_i)$ ($i\ge 2$)}
    \label{fig:AmovesforTi}
\end{figure}

Replacing the above underlined portion by
$a_2a_3^{\lambda-1}a_2^{-1}\cdot (a_2a_1a_3)$, it 
continues to 
\begin{align*}
=&
a_1^{1-\lambda}f(w_3,a_3^2) w_3^{m}
f(a_2a_1^2a_2^{-1},a_1^2) a_2a_1^{1-\lambda}a_2^{-1}
\uwave{f(a_2a_3^2a_2^{-1},w_3)\cdot 
a_2a_3^{\lambda-1}a_2^{-1}f(t_1w_3t_1^{-1},a_2a_3^2a_2^{-1})}
\\
&\cdot
\underline{f(t_1a_2^2t_1^{-1},a_2a_1^2a_2^{-1}) t_1a_2^{\lambda-1}t_1^{-1}
f(t_1a_1^2t_1^{-1},t_1a_2^2t_1^{-1})}
\cdot
f(t_1a_3^2t_1^{-1},t_1w_3t_1^{-1})\cdot t_1
\end{align*}
Noting that the conjugation by $t_1$ switches $a_1$ and $a_3$
and putting $\Omega:=(a_2a_3^2a_2^{-1})w_3(t_1w_3t_1^{-1})$, 
we further continue the above to
\begin{align*}
=&
a_1^{1-\lambda}f(w_3,a_3^2) w_3^{m}
f(a_2a_1^2a_2^{-1},a_1^2) 
\cdot
(a_2a_1^{1-\lambda}a_2^{-1})
\cdot
\uwave{
w_3^{-m}
f(t_1w_3 t_1^{-1},w_3)
(t_1w_3^{-m}t_1^{-1})\Omega^m} \\
&\cdot
 \underline{
(t_1w_3^m t_1^{-1})(a_2a_1^{1-\lambda}a_2^{-1}) f(a_3^2,a_2a_1^2a_2^{-1}) a_3^{1-\lambda}}
\cdot f(a_1^2,t_1w_3t_1^{-1})
\cdot t_1\\
=&
a_1^{1-\lambda} \cdot f(w_3,a_3^2) \cdot
f(a_2a_1^2a_2^{-1},a_1^2) \cdot
f(t_1w_3 t_1^{-1},w_3)\cdot
(a_2a_1^{-2}a_2^{-1}\Omega)^m \cdot
f(a_3^2,a_2a_1^2a_2^{-1})\\
&\cdot f(a_1^2,t_1w_3t_1^{-1})
\cdot a_3^{1-\lambda}
\cdot t_1, \\
\end{align*}
where we applied \eqref{NSlem1.5}
in the above underlined and underwaved portions,
and used
$a_2a_1^{-2}a_2^{-1}\Omega =w_4$.

We then conclude the aimed quantity \eqref{FBt1} as:
\begin{align}
\FB(t_1)=&
a_1^{1-\lambda}f(w_3,a_3^2) 
f(a_2a_1^2a_2^{-1},a_1^2) 
f(t_1w_3t_1^{-1},w_3)
f(a_3^2,a_2a_1^2a_2^{-1})
\\
&\cdot
f(a_1^2,t_1w_3t_1^{-1}) \cdot
a_3^{1-\lambda}
w_4^\lambda \cdot t_1
\nonumber
\\
=&
a_1^{1-\lambda}
a_3^{1-\lambda}
w_4^m
\cdot t_1
\label{afterPentagon}
\end{align}

After \eqref{afterPentagon}, it suffices to derive
$$
a_1^{1-\lambda}
a_3^{1-\lambda}\cdot t_1=w_4^{-m}\cdot t_1^\lambda.
$$
In fact, we have $a_1^2a_3^2t_1^2=w_4$.
Using the fact that $w_4$ commutes with $a_1a_3$
we obtain
$$t_1^\lambda=t_1^{\lambda-1}\cdot t_1
=t_1\cdot(t_1^2)^m
=t_1\cdot w_4^m (a_1a_3)^{1-\lambda}
$$
which completes the proof of our claim.
\end{proof}

Next, we shall show
\begin{prop}
\label{app-prop-t2}
Let $F=(\lambda,f)\in\GT$ and
$\FB$ as in the above.
Then $F_{\mathcal{B}}$ acts on $t_2=a_4a_3a_5a_4$ by
\begin{equation}
\FB(t_2)= f(t_2^2,t_1^2) t_2^\lambda
f(t_1^2,t_2^2).
\end{equation}
\end{prop}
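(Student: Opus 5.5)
We compute $\FB(t_2)=\FB(a_4)\FB(a_3)\FB(a_5)\FB(a_4)$ directly, in the same way as for $t_1$. The difference is that we now first normalize the inner conjugators by means of the commutation facts available in $\widehat{B}_n$. By definition,
\[
\FB={\rm inn}\Bigl(f(y_3,a_3^2)f(y_5,a_5^2)\cdots\Bigr)\circ F .
\]
Since $a_3,a_5$ commute with $a_j$ for $j\ge 7$ and with $y_7,\ldots$ up to squares, we first discard the factors $f(y_{2i+1},a_{2i+1}^2)$ with $2i+1\ge 7$, as in the proof of Lemma \ref{FBddprime}. This gives
\[
\FB(a_3)=f(a_3^2,y_3)\,a_3^\lambda\, f(y_3,a_3^2),\qquad \FB(a_5)=a_5^\lambda ,
\]
using that $f(y_3,a_3^2)$ commutes with $a_5$. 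Similarly,
\[
\FB(a_4)=f(w_5,a_5^2)\,f(a_4^2,y_4)\,a_4^\lambda\, f(y_4,a_4^2)\,f(a_5^2,w_5),
\]
up to the conjugation by $f(y_3,a_3^2)$. Here we use $f(y_5,a_5^2)=f(w_5,a_5^2)$, since $w_4$ commutes with $y_5$ and $a_5$ (Haiku Lemma (ii)).

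The key step is to rewrite the whole product as a composite of A-move factors attached to a sequence of pants decompositions of the 6-punctured disc, as in Figure \ref{fig:AmovesforTi}. As in the $t_1$ case, we apply relation (II) in the form \eqref{NSlem1.5} to rewrite the first $\FB(a_4)$ in its ``$a_3$-side'' form, analogous to \eqref{form2}, and the last $\FB(a_4)$ in its ``$a_5$-side'' form, analogous to \eqref{form1}. We then push the middle factor $a_3^\lambda a_5^\lambda$ through by conjugation with $a_4$, using
\[
a_4 a_3 a_5 a_4=t_2 \qquad\text{and}\qquad t_2 a_3 t_2^{-1}=a_5 .
\]
After this, two further applications of \eqref{NSlem1.5} collapse the adjacent hexagon-type triples. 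What remains is a string of $f$'s whose arguments are twists of loops in the 6-punctured disc. This string is a closed cycle of A-moves, except for one A-move between the bracketings $((x_1,x_2),(x_3,x_4)),(x_5,x_6)$ and $(x_1,x_2),((x_3,x_4),(x_5,x_6))$.

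By relation (III) (the pentagon), applied possibly twice along the faces of the associahedron-type diagram, the string reduces to
\[
f(T_{1234},T_{3456})\;=\;f(t_1^2,t_2^2),
\]
where the equality uses \eqref{tausigmas} and Haiku Lemma (i)/(ii), noting that $a_1^2,a_3^2,a_5^2$ commute with everything relevant. Together with a leftover power of $w_6$-type central elements, the computation should yield
\[
\FB(t_2)=f(t_2^2,t_1^2)\,a_3^{1-\lambda}a_5^{1-\lambda}\,T_{3456}^{m}\;t_2\;f(t_1^2,t_2^2).
\]
The final step is then identical to the end of the $t_1$ proof. Since $T_{3456}=a_3^2a_5^2t_2^2$ and $T_{3456}$ commutes with $a_3a_5$, we get $t_2^\lambda=t_2\,T_{3456}^m(a_3a_5)^{1-\lambda}$.

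The main obstacle is the bookkeeping in the middle step. We must choose the right expressions for the two outer $\FB(a_4)$ factors so that the hexagon applications match up. We must also verify that the surviving $f$-factors form exactly a pentagon (or a pair of pentagons) in the diagram of Figure \ref{fig:AmovesforTi}, with arguments that are conjugates of the correct full twists. I would first work this out on the level of pants decompositions: label each $f(G,L)$ by its gained and lost loops, and only then translate back into words in the $a_i$.
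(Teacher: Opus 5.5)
You follow the same route as the paper: expand $\FB(a_4)\FB(a_3)\FB(a_5)\FB(a_4)$, put the two copies of $\FB(a_4)$ into different forms, reduce with (II) and (III), and finish with $t_2^\lambda=t_2T_{3456}^m(a_3a_5)^{1-\lambda}$. As written, however, it is not a proof.

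There is first an error in the setup for $a_3$. The factor $f(y_3,a_3^2)$ in the inner automorphism exactly cancels Drinfeld's conjugator. So $\FB(a_3)=a_3^\lambda$, just as $\FB(a_1)=a_1^\lambda$ and $\FB(a_5)=a_5^\lambda$; it is not $f(a_3^2,y_3)a_3^\lambda f(y_3,a_3^2)$. Correspondingly, $\FB(a_4)=f(w_5,a_5^2)f(w_3,a_3^2)f(a_4^2,w_4)\,a_4^\lambda\,f(w_4,a_4^2)f(a_3^2,w_3)f(a_5^2,w_5)$, with both conjugators present. Your later use of the middle factor $a_3^\lambda a_5^\lambda$ is only correct after this fix. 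The factors $f(y_{2j+1},a_{2j+1}^2)$ with $2j+1\ge 7$ do drop out, but the reason is that $y_7=w_6^{-1}w_7$ and $a_7$ commute on the nose with $a_1,\dots,a_5$. The argument with squares from Lemma \ref{FBddprime} concerns $d,d'$ and is not what is needed here.

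The genuine gap is the middle step, which carries the whole content. Since $a_3^{1-\lambda}a_5^{1-\lambda}T_{3456}^m\,t_2=t_2^\lambda$ is an identity, the formula you say the computation ``should yield'' is just the proposition restated.

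Your description of how to reach it does not hold up:
\begin{itemize}
\item The Drinfeld conjugator of $a_4$ is $f(a_4^2,y_4)=f(a_4^2,w_4)$, not $f(a_4^2,a_3^2)$. So there is no ready-made ``$a_3$-side'' analogue of \eqref{form2}. The paper must first use the two pentagon rewritings \eqref{twoPentagon_t2} to produce $f(a_4^2,a_3^2)\,a_4^{\lambda-1}f(a_4a_3^2a_4^{-1},a_4^2)$ before any hexagon applies.
\item Even then, conjugated powers such as $a_4a_5^{\lambda-1}a_4^{-1}$ and $t_2a_4^{\lambda-1}t_2^{-1}$ stay interleaved with the $f$'s. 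You are not left with a pure string of $f$'s.
\item The paper needs two further five-term substitutions, several more applications of (III) and (II), and careful tracking of the leftover powers. It closes with the identity $f(w_5,a_5^2)f((a_3a_4)^3,w_4)f((a_3a_4a_5)^4,w_5)f(a_5^2,(a_3a_4)^3)=f((a_3a_4a_5)^4,w_4)$.
\end{itemize}

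Your heuristic also does not match your own target. You predict that the $f$'s collapse to a single $f(T_{1234},T_{3456})$. The target contains two factors: $f(T_{3456},T_{1234})$ on the left and $f(t_2T_{1234}t_2^{-1},T_{3456})$ immediately before $t_2$. These correspond to the A-moves $\mathcal{B}\to\mathcal{B}'\to t_2(\mathcal{B})$.

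Finally, the two facts you do quote are the A-move $\mathcal{B}\to\mathcal{B}'$ and $f(T_{1234},T_{3456})=f(t_1^2,t_2^2)$. These give the statement at once via Theorem \ref{Amovetheorem}, exactly as in Theorem \ref{doublestrandsubgroup}(i) and the main text's \eqref{t1t2t3}. But the point of this proposition is to derive that formula without the A-move theorem, and the direct computation that would do so is not carried out.
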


\begin{proof}
Now by definition $\FB$ acts on $a_1,a_2,a_3$ by 
\begin{align}
\FB(a_i)&=a_i^\lambda \quad (i=1,3,5), \\
\FB(a_2)&=f(w_3,a_3^2) f(a_2^2,a_1^2) a_2^\lambda
f(a_1^2,a_2^2)f(a_3^2,w_3) \\
\FB(a_4)&=f(w_5,a_5^2) f(w_3,a_3^2) f(a_4^2,w_4) 
a_4^\lambda
f(w_4,a_4^2)f(a_3^2,w_3) f(a_5^2,w_5) \\
&=f(w_5,a_5^2) f(w_3,a_3^2) f(a_4^2,w_4) 
a_4^{\lambda-1}
\nonumber
f(a_4w_4a_4^{-1},a_4^2) \\ &{\qquad}\cdot f(a_4a_3^2a_4^{-1},a_4w_3a_4^{-1})f(a_4a_5^2a_4^{-1},a_4w_5a_4^{-1})\cdot a_4. 
\nonumber
\end{align}
We compute $\FB(t_2)=\FB(a_4)\FB(a_3)\FB(a_5)\FB(a_4)$
along with the diagram in Figure \ref{fig:AmovesforTi},
where ``$D$'' indicates a disc with two punctures numbered 
$2,1$ from the left and arrows between pants-decompositions
(of a punctured plane) have labels of $f(G,L)$ with
$G,L$ indicate twists of a gained loop, a lost loop
respectively.

\begin{align}
&\FB(t_2)=\FB(a_4)\FB(a_3)\FB(a_5)\FB(a_4) 
\label{FBt2}
\\
=&
f(w_5,a_5^2) 
{f(w_3,a_3^2) f(a_4^2,w_4)} 
a_4^{\lambda-1}
{
f(a_4w_4a_4^{-1},a_4^2)f(a_4a_3^2a_4^{-1},w_3)
} 
\nonumber\\ &{\qquad}\cdot f(a_4a_5^2a_4^{-1},w_5)\cdot 
a_4(a_3a_5)^{\lambda-1}a_4^{-1}\cdot a_4a_3a_5\cdot \FB(a_4) \nonumber\\
=&
f(w_5,a_5^2) \underline{f(w_3,a_3^2) f(a_4^2,w_4)} 
a_4^{\lambda-1}
\uwave{f(a_4w_4a_4^{-1},a_4^2)f(a_4a_3^2a_4^{-1},w_3)} \nonumber\\ 
&{\qquad}\cdot 
{f(a_4a_5^2a_4^{-1},w_5)}
\cdot 
a_4a_3^{\lambda-1}a_4^{-1}\cdot
{a_4a_5^{\lambda-1}a_4^{-1}}
\nonumber \\
&\qquad\cdot
{f(t_2w_5t_2^{-1},a_4a_5^2a_4^{-1})} f(t_2w_3t_2^{-1},a_4a_3^2a_4^{-1}) f(t_2a_4^2t_2^{-1},a_4w_4a_4^{-1}) 
t_2a_4^{\lambda-1}t_2^{-1} \nonumber \\
&{\qquad}\cdot
f(t_2w_4t_2^{-1},t_2a_4^2t_2^{-1})
f(t_2a_3^2t_2^{-1},t_2w_3t_2^{-1})  f(t_2a_5^2t_2^{-1},t_2w_5t_2^{-1})\cdot t_2. 
\nonumber
\end{align}
Replacing the above underlined and underwaved portions 
by:
\begin{align}
\label{twoPentagon_t2}
&f(w_3,a_3^2) f(a_4^2,w_4)
= f((a_3a_4)^3,w_4)f(a_4^2,a_3^2)f(w_3,(a_3a_4)^3),
\\
&f(a_4w_4a_4^{-1},a_4^2)f(a_4a_3^2a_4^{-1},a_4w_3a_4^{-1})
=f((a_3a_4)^3,w_3) f(a_4a_3^2a_4^{-1},a_4^2)
f(a_4w_4a_4^{-1},(a_3a_4)^3),
\nonumber
\end{align}
with cancellation of $f(w_3,(a_3a_4)^3)^{\pm 1}$ there across
$a_4^{\lambda-1}$, and noting the commutativity of
${f(a_4a_5^2a_4^{-1},w_5)}$ and $f(t_2w_4t_2^{-1},t_2a_4^2t_2^{-1})$,
the computation \eqref{FBt2} continues
to:
\begin{align}
=&f(w_5,a_5^2)
f((a_3a_4)^3,w_4)
\uuline{f(a_4^2,a_3^2)a_4^{\lambda-1}
 f(a_4a_3^2a_4^{-1},a_4^2)}
f(a_4w_4a_4^{-1},(a_3a_4)^3) 
\\
&\cdot
\uuline{a_4a_3^{\lambda-1}a_4^{-1}}
f(t_2w_3t_2^{-1},a_4a_3^2a_4^{-1})
f(a_4a_5^2a_4^{-1},w_5)a_4a_5^{\lambda-1}a_4^{-1} 
\nonumber \\
&\cdot \underline{f(t_2w_5t_2^{-1},a_4a_5^2a_4^{-1})
f(t_2a_4^2t_2^{-1}, a_4w_4a_4^{-1})}
t_2a_4^{\lambda-1}t_2^{-1}
\uwave{f(t_2w_4t_2^{-1},t_2a_4^2t_2^{-1})
f(a_3^2,t_2w_5t_2^{-1})} \nonumber \\
&\cdot
f(a_5^2,t_2w_3t_2^{-1})\cdot t_2
\nonumber
\end{align}
Moreover, applying the hexagon relation
\eqref{NSlem1.5}
to the double-lined factors,  and replacing 
the underlined and underwaved portions 
by 5-cyclic relations 
respectively by 
\begin{align*}
&f(a_5^{-1}(a_3a_4)^3a_5,t_2w_3t_2^{-1})
f(t_2a_4^2t_2^{-1},a_4a_5^2a_4^{-1})
f(t_2w_5t_2^{-1},a_5^{-1}(a_3a_4)^3a_5), \\
&f(a_5^{-1}(a_3a_4)^3a_5,t_2w_3t_2^{-1})
f(a_3^2,t_2a_4^2t_2^{-1})
f(t_2w_4t_2^{-1},a_5^{-1}(a_3a_4)^3a_5)
\end{align*}
(with obvious cancellation) in the above, 
we continue
\eqref{FBt2} more to:

\begin{align*}
=& f(w_5,a_5^2)f((a_3a_4)^3,w_4)
a_3^{1-\lambda}
(a_3a_4)^{3m}
\underline{
f(a_4a_3^2a_4^{-1},a_3^2)
f(a_4w_4a_4^{-1},(a_3a_4)^3)
f(t_2w_3t_2^{-1},a_4a_3^2a_4^{-1})}
\\
&\cdot
f(a_4a_5^2a_4,w_5)(a_4a_5^{\lambda-1}a_4^{-1})
f(a_5^{-1}(a_3a_4)^3a_5,t_2w_3t_2^{-1})\\
&\cdot
\uwave{
f(t_2a_4^2t_2^{-1},a_4a_5^2a_4^{-1})
(t_2a_4^{\lambda-1}t_2^{-1})
f(a_3^2,t_2a_4^2t_2^{-1})}\cdot
\uuline{
f(t_2w_4t_2^{-1},a_5^{-1}(a_3a_4)^3a_5)
f(a_5^2,t_2w_3t_2^{-1})
}\cdot t_2
\end{align*}
Use relation (III) to the above underlined and double-underlined portions to get
\begin{align*}
&f(t_2w_3t_2^{-1},(a_3a_4)^3)f(a_4w_4a_4^{-1},a_3^2), \\
&f((a_3a_4a_5)^4,t_2w_3t_2^{-1})
f(a_5^2,a_5^{-1}(a_3a_4)^3a_5)
f(t_2w_4t_2^{-1},(a_3a_4a_5)^4)
\end{align*}
respectively, and apply relation (II) to the underwaved 
portion. Then, noting also that $a_4a_5^2a_4^{-1}$
commutes with $a_5^{-1}(a_3a_4)^3a_5$, $t_2w_3t_2^{-1}$,
we may continue to:
\allowdisplaybreaks
\begin{align*}
\FB(t_2)=& f(w_5,a_5^2)f((a_3a_4)^3,w_4)
a_3^{1-\lambda}
(a_3a_4)^{3m}
f(t_2w_3t_2^{-1},(a_3a_4)^3)
\underline{f(a_4w_4a_4^{-1},a_3^2)}
\\
&\cdot
\underline{
f(a_4a_5^2a_4^{-1},w_5)f(a_5^{-1}(a_3a_4)^3a_5, t_2w_3t_2^{-1})}
(a_5^{-1}a_4^{\lambda-1}a_5) \\
&\cdot
f(t_2a_4^2t_2^{-1},a_4a_5^2a_4^{-1})
(t_2a_4^{\lambda-1}t_2^{-1})
f(a_3^2,t_2a_4^2t_2^{-1})\\
&\cdot
f((a_3a_4a_5)^4,t_2w_3t_2^{-1})
f(a_5^2,a_5^{-1}(a_3a_4)^3a_5)
f(t_2w_4t_2^{-1},(a_3a_4a_5)^4)
\cdot t_2 \\
=&
f(w_5,a_5^2)f((a_3a_4)^3,w_4)
a_3^{1-\lambda}
(a_3a_4)^{3m}
f(t_2w_3t_2^{-1},(a_3a_4)^3) \\
&\cdot
f(a_5^{-1}(a_3a_4)^3a_5,w_5)
\uwave{f(a_5^{-1}a_4^2a_5,a_3^2)
(a_5^{-1}a_4^{\lambda-1}a_5)
f(t_2a_4^2t_2^{-1},a_4a_5^2a_4^{-1})}
(t_2a_4^{\lambda-1}t_2^{-1})
\\
&\cdot
f(a_3^2,t_2a_4^2t_2^{-1})
f((a_3a_4a_5)^4,t_2w_3t_2^{-1})
f(a_5^2,a_5^{-1}(a_3a_4)^3a_5)
f(t_2w_4t_2^{-1},(a_3a_4a_5)^4)
\cdot t_2 \\
=&
f(w_5,a_5^2)f((a_3a_4)^3,w_4)
a_3^{1-\lambda}
(a_3a_4)^{3m}
\uwave{f(t_2w_3t_2^{-1},(a_3a_4)^3)} \\
&\cdot
\uwave{f(a_5^{-1}(a_3a_4)^3a_5,w_5)}
a_3^{1-\lambda}
\uuline{f(t_2a_4^2t_2^{-1},a_3^2)
(t_2a_4^{1-\lambda}t_2^{-1})}
a_5^{-1}(a_3a_4)^{-3m}a_5
\uuline{(t_2a_4^{\lambda-1}t_2^{-1})}
\\
&\cdot
\uuline{f(a_3^2,t_2a_4^2t_2^{-1})}
\uwave{f((a_3a_4a_5)^4,t_2w_3t_2^{-1})}
f(a_5^2,a_5^{-1}(a_3a_4)^3a_5)
f(t_2w_4t_2^{-1},(a_3a_4a_5)^4)
\cdot t_2 
\end{align*}

Cancelling the double-lined factors in the above line
allows us to apply relation (III) to the underwaved part.
Then, we obtain
\allowdisplaybreaks
\begin{align*}
\FB(t_2)=& f(w_5,a_5^2)f((a_3a_4)^3,w_4)
a_3^{1-\lambda}
(a_3a_4)^{3m}\cdot
f((a_3a_4a_5)^4,w_5)\cdot
a_3^{1-\lambda} \\
&\cdot
\underline{f(a_5^{-1}(a_3a_4)^3a_5,(a_3a_4)^3)
(a_5^{-1}(a_3a_4)^{3m}a_5)
f(a_5^2,a_5^{-1}(a_3a_4)^3a_5)}
f(t_2w_4t_2^{-1},(a_3a_4a_5)^4)
\cdot t_2 \\
=& f(w_5,a_5^2)f((a_3a_4)^3,w_4)
f((a_3a_4a_5)^4,w_5) a_3^{2-2\lambda}
(a_3a_4)^{3m}\cdot\\
&\cdot
(a_3a_4)^{-3m}
\cdot
f(a_5^2,(a_3a_4)^3)
a_5^{1-\lambda} \Omega^m
f(t_2w_4t_2^{-1},(a_3a_4a_5)^4)
\cdot t_2 \\
\end{align*}
where $\Omega=a_5^{-1}(a_3a_4)^3a_5\cdot (a_3a_4)^3\cdot
a_3^2=a_3^2\cdot(a_3a_4a_5)^4=a_3^2\cdot (a_3^2a_5^2t_2^2)$.
It follows then that
\begin{align*}
\FB(t_2)=&
a_3^{2-2\lambda}
\uwave{
f(w_5,a_5^2)f((a_3a_4)^3,w_4)
f((a_3a_4a_5)^4,w_5) 
f(a_5^2,(a_3a_4)^3)}
\\
&\cdot
a_5^{1-\lambda}
a_3^{\lambda-1}
(a_3a_4a_5)^{4m}
f(t_2w_4t_2^{-1},(a_3a_4a_5)^4)
\cdot t_2
\\
=&
a_3^{2-2\lambda}
f((a_3a_4a_5)^4,w_4) a_5^{1-\lambda}
a_3^{\lambda-1} (a_3^{\lambda-1}a_5^{\lambda-1}
t_2^{\lambda-1})
f(t_2w_4t_2^{-1},(a_3a_4a_5)^4)
\cdot t_2 \\
=&
f((a_3a_4a_5)^4,w_4)\cdot
t_2^{\lambda-1}\cdot
f(t_2w_4t_2^{-1},(a_3a_4a_5)^4)
\cdot t_2 \\
=&f(t_2^2,t_1^2)\cdot t_2^{\lambda-1}\cdot 
f(t_2t_1^2t_2^{-1},t_2^2)
\cdot t_2
=f(t_2^2,t_1^2)\cdot t_2^{\lambda}\cdot 
f(t_1^2,t_2^2),
\end{align*}
where the equality to the last line follows from
\begin{equation}
f((a_3a_4a_5)^4,w_4)=f(t_2^2,t_1^2),\quad
f(t_2w_4t_2^{-1},(a_3a_4a_5)^4)
=f(t_2t_1^2t_2^{-1},t_2^2)
\end{equation}
which are easily derived from the known identities
$(a_3a_4a_5)^4=a_3^2a_5^2t_2^2$ and $w_4=a_1^2a_3^2t_1^2$.
We thus conclude the desired assertion for
$\FB(t_2)$.
\end{proof}

\begin{prop}
Let $i\ge 2$ and write $W_i:=(t_1\cdots t_{i-1})^i$.
Then, we have
$$
F_{\mathcal{B}}(t_i)=f(t_i^2,W_i)\,t_i^\lambda \,f(W_i,t_i^2).
$$
\end{prop}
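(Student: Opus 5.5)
We prove the formula by induction on $i$, in the same way as the direct derivation of $\FB(t_2)$ in Proposition \ref{app-prop-t2}. The base case $i=2$ is that proposition: $W_2=t_1^2$, so the claimed formula reads $\FB(t_2)=f(t_2^2,t_1^2)t_2^\lambda f(t_1^2,t_2^2)$. For the inductive step we fix $i\ge 3$. We write $t_i=a_{2i}a_{2i-1}a_{2i+1}a_{2i}$ and expand
\[
\FB(t_i)=\FB(a_{2i})\,a_{2i-1}^\lambda a_{2i+1}^\lambda\,\FB(a_{2i}).
\]
Here $\FB(a_{2i})$ is the Drinfeld value $f(a_{2i}^2,y_{2i})a_{2i}^\lambda f(y_{2i},a_{2i}^2)$, conjugated by the product of the $f(y_{2j+1},a_{2j+1}^2)$. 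All factors with $2j+1\notin\{2i-1,2i+1\}$ commute with $a_{2i-1},a_{2i},a_{2i+1}$, so they cancel between the two occurrences of $\FB(a_{2i})$ and play no role.

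The computation is then the same as for $t_2$, with the replacements
\[
a_3,a_4,a_5\ \longmapsto\ a_{2i-1},a_{2i},a_{2i+1},\qquad
w_3,w_4,w_5\ \longmapsto\ w_{2i-1},w_{2i},w_{2i+1},\qquad
(a_3a_4)^3\ \longmapsto\ y_{2i}y_{2i-1}\text{-type local twists}.
\]
As in that proof, we first rewrite one copy of $\FB(a_{2i})$ in the form with the $a_{2i}$ pushed to the right, as in \eqref{form1}. We then move $a_{2i-1}^\lambda a_{2i+1}^\lambda$ through and apply the hexagon \eqref{NSlem1.5} twice. Next we use the pentagon (relation (III)) along the chain of A-moves of Figure \ref{fig:AmovesforTi}, which collapses the $f$-factors. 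The outcome should be
\[
\FB(t_i)=f\bigl((a_{2i-1}a_{2i}a_{2i+1})^4,\,w_{2i}\bigr)\,t_i^{\lambda-1}\,f\bigl(t_iw_{2i}t_i^{-1},\,(a_{2i-1}a_{2i}a_{2i+1})^4\bigr)\,t_i .
\]
Every local identity used for $t_2$ involves only $a_3,a_4,a_5$ together with the "outer" elements $w_3,w_4,w_5$. By Lemma \ref{exLemma}(iv) and the braid relations, these transfer verbatim once $w_3,w_4,w_5$ are replaced by $w_{2i-1},w_{2i},w_{2i+1}$: each $w_k$ is central in $\iota(\B_k)$, and the relations $w_{k-1}y_k=w_k$ are all that was used.

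The final step converts the arguments of $f$ into ribbon form. By Proposition \ref{AppBprop}(1) we have $w_{2i}=W_i\,a_1^2a_3^2\cdots a_{2i-1}^2$, and as in the $t_2$ case
\[
(a_{2i-1}a_{2i}a_{2i+1})^4=a_{2i-1}^2a_{2i+1}^2t_i^2 .
\]
The element $a_1^2\cdots a_{2i-3}^2$ commutes with everything involved. The squares $a_{2i-1}^2$ and $a_{2i+1}^2$ commute with $W_i$ and $t_i^2$, and conjugation by $t_i$ swaps them. Applying Haiku Lemma (i) and (ii) therefore strips these factors, giving
\[
f\bigl((a_{2i-1}a_{2i}a_{2i+1})^4,w_{2i}\bigr)=f(t_i^2,W_i),\qquad
f\bigl(t_iw_{2i}t_i^{-1},(a_{2i-1}a_{2i}a_{2i+1})^4\bigr)=f(t_iW_it_i^{-1},t_i^2).
\]
Since $t_iW_it_i^{-1}\cdot t_i=t_iW_i$, this yields
\[
\FB(t_i)=f(t_i^2,W_i)\,t_i^{\lambda-1}\,t_i\,f(W_i,t_i^2)=f(t_i^2,W_i)\,t_i^\lambda\,f(W_i,t_i^2).
\]

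An alternative route, which may be cleaner, is to argue as in Theorem \ref{doublestrandsubgroup}(i) with $\B_{2i+2}$ in place of $\B_8$. Take the paired bracketing $\mathcal{B}=(\cdots((X_1,X_2),X_3)\cdots),X_{i+1}$. Theorem \ref{Amovetheorem}(ii),(iii) give $\FB(t_i)=f(t_i^2,L)\,t_i^\lambda\, f(L,t_i^2)$, where $L$ is the full twist of $X_1\cdots X_i$ modulo the commuting factors $a_{2j-1}^2$. By Proposition \ref{AppBprop}(1), $L$ is exactly $W_i$. The main obstacle on either route is the bookkeeping: checking that the twist factors $a_{2j-1}^2$ and the conjugating prefactors really commute past, so that the Haiku Lemma applies. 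I would handle this using Lemma \ref{exLemma}(i),(iii), in particular the commutation of $(t_iW_i)^2=W_{i+1}W_{i-1}$ with $t_i$.
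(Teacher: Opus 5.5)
Your main route is the paper's own proof. The paper likewise reruns the computation of Proposition~\ref{app-prop-t2}, with the two-punctured disc $D$ replaced by a disc carrying punctures $1,\dots,2i-2$. Your final step is also the correct generalization of the last lines of the $t_2$ proof: you strip $a_1^2\cdots a_{2i-3}^2$, $a_{2i-1}^2$ and $a_{2i+1}^2$ by the Haiku Lemma to reach $f(t_i^2,W_i)$ and $f(t_iW_it_i^{-1},t_i^2)$. Some details need repair.

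(1) In your dictionary, $(a_3a_4)^3$ must go to $(a_{2i-1}a_{2i})^3$, the full twist on strands $2i-1,2i,2i+1$. The product $y_{2i}y_{2i-1}=w_{2i}w_{2i-2}^{-1}$ is not local; already at $i=2$ one has $y_4y_3\neq(a_3a_4)^3$. With it, the pentagon instances and the formula for $\Omega$ would fail.

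(2) Commuting with $a_{2i-1},a_{2i},a_{2i+1}$ only cancels the factors $f(y_{2j+1},a_{2j+1}^2)$ with $2j+1\le 2i-3$ in the middle of $\FB(t_i)$; it leaves them as an outer conjugation. The right reason they disappear is that they lie in $\B_{2i-2}$. That group centralizes $y_{2i-1},y_{2i},y_{2i+1}$ as well as $a_{2i-1},a_{2i},a_{2i+1}$, so these factors drop out of each $\FB(a_{2i})$ separately. The factors with $2j+1\ge 2i+3$ centralize all of $\B_{2i+2}$.

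(3) The hexagon and pentagon instances do not transfer merely because $w_k$ is central in $\iota(\B_k)$ and $w_{k-1}y_k=w_k$. They transfer because $D$ behaves as a single fat puncture: both the $i=2$ configuration and the general one come from the same configuration with one fat strand, up to the central twist $w_{2i-2}$ around $\partial D$. This is what the paper means by commutativity of twists along disjoint loops.

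(4) No inductive hypothesis is ever used, so this is a direct computation for each $i$, not an induction.

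Your alternative route is genuinely different from the appendix. It is the argument of Theorem~\ref{doublestrandsubgroup}(i) carried out on $\B_{2i+2}$, i.e.\ the way the main text obtains \eqref{t1t2t3}. One A-move turns the paired bracketing into one containing $(X_i,X_{i+1})$. Theorem~\ref{Amovetheorem}(ii),(iii) then give $\FB(t_i)=f(T_{2i-1,\ldots,2i+2},T_{1,\ldots,2i})\,t_i^\lambda\,f(T_{1,\ldots,2i},T_{2i-1,\ldots,2i+2})$, with $T_{1,\ldots,2i}=w_{2i}$ and $T_{2i-1,\ldots,2i+2}=a_{2i-1}^2a_{2i+1}^2t_i^2$. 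Your Haiku stripping then finishes.

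What you must add is the identification of the appendix's $\FB=\mathrm{inn}\bigl(\prod_j f(y_{2j+1},a_{2j+1}^2)\bigr)\circ F$ with the lift $F_{\mathcal{B}}$ of the paired bracketing. This comes from Theorem~\ref{Amovetheorem}(iii) applied to the pairwise commuting A-moves that replace the bracket around $1,\dots,2j+1$ by one around $2j+1,2j+2$. One uses $f(\omega_{2j+1},\sigma_{2j+1}^2)=f(\eta_{2j+1},\sigma_{2j+1}^2)$, exactly as in Theorem~\ref{doublestrandsubgroup}(ii).

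This route is short and uniform in $i$, but it rests entirely on Theorem~\ref{Amovetheorem}. The purpose of the appendix is to derive \eqref{Appt1t2t3} from Drinfeld's formulas and relations (II) and (III) alone, which only your computational route does.
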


\begin{proof}
We compute $\FB(t_i)=\FB(a_{2i})\FB(a_{2i-1})\FB(a_{2i+1})\FB(a_{2i})$
along with the diagram in Figure \ref{fig:AmovesforTi},
where numbering of punctures outside $D$ 
is shifted $+(2i-4)$ so that
$D$ is filled by a disc 
with punctures numbered 
from left $2i-2$ to right $1$ and with
given a pants-decomposition 
corresponding to bracketing $(...((2i-2,2i-3)\cdots((43)(21))...)$.
The computations go the same way as in the case of 
$\FB(t_2)$ (Proposition \ref{app-prop-t2}),
thanks to the commutativity of Dehn twists
along disjoint loops.
\end{proof}

 \end{document}